\DeclareMathOperator{\E}{\mathbb{E}}
\DeclareMathOperator{\spn}{span}
\DeclareMathOperator{\rank}{rank}
\newcommand{\supp}{\mathrm{supp}\,}
\def\T{\sf T}
\renewcommand{\P}{\mathbb{P}}
\newcommand{\C}{\mathbb{C}}
\newcommand{\R}{\mathbb{R}}
\def\eps{\epsilon}
\def\phi{\varphi}
\newcommand{\rmd}{\,\mathrm{d}}
\def\M{{\rm M}}
\def\e{\vec{{\bf e}}}
\def\SSS{S}
\def\Area{\mathcal{\bf A}}
\newtheorem{theorem}{Theorem}[section]
\newtheorem{proposition}[theorem]{Proposition}
\newtheorem{lemma}[theorem]{Lemma}
\newtheorem{definition}[theorem]{Definition}
\newtheorem{conjecture}{Conjecture}
\newtheorem{assumption}{Assumption}
\newtheorem{remark}[theorem]{Remark}
\begin{document}

\title{A universal result for consecutive random subdivision of polygons}

\author{
Nguyen Tuan Minh\footnote{Center for Mathematical Sciences, Lund University, S\"olvegatan 18, Box 118, 22100 Lund, Sweden.} ${}^{,}$\footnote{Email address: nguyen@maths.lth.se}
\, and 
Stanislav Volkov$^{*,}$\footnote{Email address: s.volkov@maths.lth.se
}}
%
%

\maketitle
\begin{abstract} We consider consecutive random subdivision of polygons described as follows. Given an initial convex polygon with~$d\ge 3$ edges, we choose a point at random on each edge, such that the proportions in which these points divide edges are i.i.d.\ copies of some random variable~$\xi$. These new points form a new (smaller) polygon. By repeatedly implementing this procedure we obtain a sequence of random polygons. The aim of this paper is to show that under very mild non-degenerateness conditions on~$\xi$, the shapes of these polygons eventually become ``flat''  The convergence rate to flatness  is also investigated; in particular, in the case of triangles~($d=3$), we show how to calculate the exact value of the rate of convergence, connected to Lyapunov exponents. Using the theory of products of random matrices our paper greatly generalizes the results of~\cite{VOL} which are achieved mostly by using ad hoc methods.
\end{abstract}

\begin{keywords}
Random subdivisions, products of random matrices, Lyapunov exponents.
\end{keywords}

\begin{amsc}
60D05, 60B20, 37M25.
\end{amsc}

\section{Introduction}

Many problems of consecutive random subdivision of a convex geometrical figure have been investigated by several authors since 1980s. In~\cite{WAT}, G.~S.~Watson introduced the following model: given an initial triangle, one chooses a point on each edge by keeping the same random proportion~$\xi$ and hence obtaining a new triangle. If one repeats the above process with independent identically distributed random proportions~$\xi^{(n)},n=1,2,\dots$ then the limit triangle vanishes to the centroid of the initial triangle. To study the {\it shapes} of these triangles, let us rescale the newly formed in each step triangle in such a way that the largest side has length~$1$. It is interesting that the ``limit'' of these rescaled triangles is non-vanishing and, in fact, random. Veitch and Watson in~\cite{VW} also gave an extension for a system of points in higher dimensional real space. With the same motivation of random triangles, Mannion in~\cite{MAN} studied the situation where on each step the triangle is formed by choosing three uniformly distributed random points inside the {\it interior} of the preceding triangle. The sides of these triangles almost surely converge to collinear segments. Diaconis and Miclo~\cite{DIAMIC} considered a triangle split by the three medians such that one of the~$6$ triangles is chosen at random to replace the original triangle. It turns out that the limiting triangle's shape is flat.
Volkov in~\cite{VOL} discovered a similar phenomenon by considering a model where the new triangle is formed by choosing a random point uniformly and independently on each of the sides of the original triangle; he also studied  distribution of the ``middle'' point.
 
In the present paper, we give a generalization  of Volkov's result in~\cite{VOL} for all convex polygons and nearly all non-degenerate distributions of proportions in which  the sides of the polygon are split. 

Let us now formulate the model rigorously. Fix~$d\ge 3$ and  a random variable~$\xi$ whose support lies on~$[0,1]$. Let~$L_0=A_1^{(0)}A_2^{(0)}\dots A_d^{(0)}$ be a convex~$d$-polygon (i.e., a convex polygon with~$d$ sides) in the plane, with edges~$A_j^{(0)} A_{j+1}^{(0)}$, $j=1,2,\dots,d$, with the convention~$A_{d+1}^{(1)}\equiv A_{1}^{(1)}$. Randomly choose a point~$A_j^{(1)}$ in~$A_j^{(0)}A_{j+1}^{(0)}$ such  that~$|A_j^{(0)} A_j^{(1)}| / |A_j^{(0)}A_{j+1}^{(0)}|=\xi_{i}$, where~$\xi_{i}$, $i=1,\dots,d$, are i.i.d.\ copies of the random variable~$\xi$. Thus we obtain new convex polygon~$L_1=A_1^{(1)}A_2^{(1)}\dots A_d^{(1)}$. Repeating the above process such that the random vectors 
$\left(\xi_1^{(n)},\xi_2^{(n)},\dots,\xi_d^{(n)}\right)$, $n=1,2,\dots$, are i.i.d., we obtain a Markov chain of polygons~$(L_n)_{n\ge 0}$ where~$L_n=A_1^{(n)} A_2^{(n)} \dots A_d^{(n)}$. 
\begin{figure}[!ht]    \centering
    \includegraphics[scale=1]{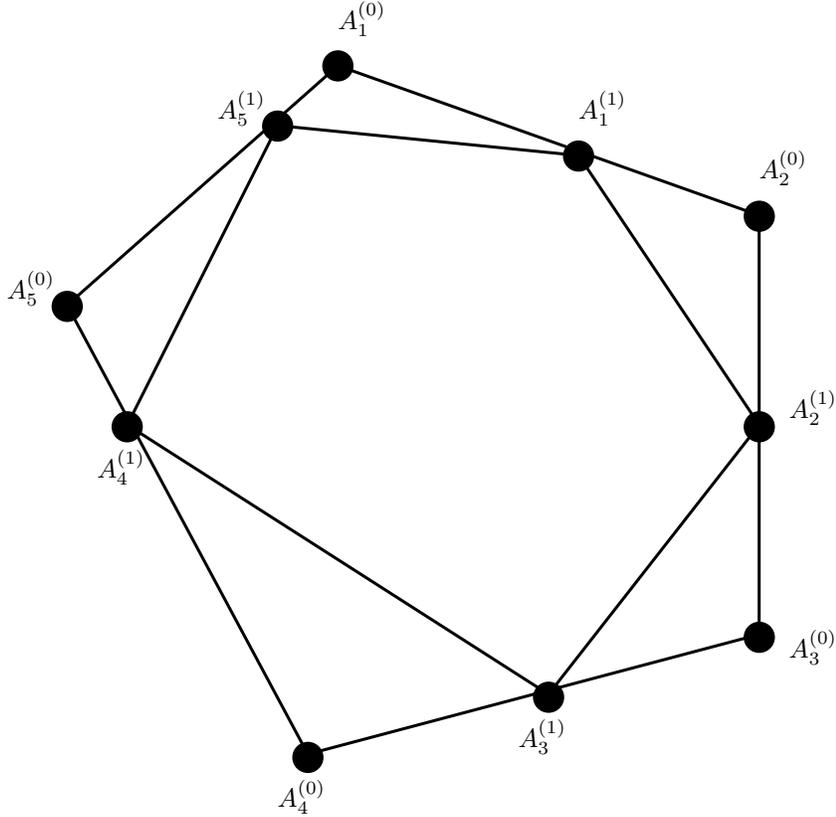} 
    \caption{A new smaller random pentagon~$L_1$ obtaining from the primary pentagon~$L_0$.}
\end{figure}

It is easy to see that the polygons~$L_n$ become smaller and smaller and eventually converge to a point, however the behaviour of their {\it shapes} is less clear. To study the shapes we may, for example,  place one of the vertices at the origin~$(0,0)$ and rescale the polygon in such a way that its longest edge has always length~$1$. We will show that under some regularity conditions on the distribution of~$\xi$ the rescaled polygon will eventually become degenerate, i.e.\ flat, in the sense that all of its vertices will be lying approximately along the same line; observe that this is equivalent to the fact that the area of the rescaled polygon converges to~$0$ as~$n$ goes to infinity.

Let~$l_j^{(n)}=A_j^{(n)} A_{j+1}^{(n)}$, $j=1,2,\dots,d$,  be the vector corresponding to the~$j$-th side of~$L_n$ and~$(x_j^{(n)},y_j^{(n)})$  denote its Cartesian coordinates. From elementary geometrical calculations one can obtain the following linear relation:
\begin{equation}\label{linear}
x^{(n+1)}=H_{n+1} x^{(n)}, \ y^{(n+1)}=H_{n+1} y^{(n)}
\end{equation}
where~$x^{(n)}=\left(x_1^{(n)},  x_2^{(n)}, \dots , x_d^{(n)} \right)^{\T}$ 
and~$y^{(n)}= 
\left(y_1^{(n)}, y_2^{(n)}, \dots , y_d^{(n)} \right)^{\T}$ are column vectors, and~$H_n$ is an i.i.d.\ copy of the following random matrix
\begin{equation}\label{matrix1}
H=H(\xi_1,\dots,\xi_d)=  \left(\begin{matrix}
1-\xi_1 & \xi_2 & 0 & \dots & 0\\
0 & 1-\xi_2 & \xi_3 & \dots &  0\\
\vdots &\vdots & \vdots &\ddots & \vdots \\
 0 & 0& 0&  \ddots & \xi_d \\
 \xi_1 & 0& 0& \dots  & 1-\xi_d 
\end{matrix}\right)
\end{equation}
and~$\xi_1,\xi_2,\dots,\xi_d$ are i.i.d.\ copies of a random variable~$\xi$. Note that~$\sum_{j=1}^d x_j^{(n)}=0$ and~$\sum_{j=1}^d y_j^{(n)}=0$.
In particular,
$l_j^{(n)}=(\e_j H^{(n)} x^{(0)},\e_j H^{(n)} y^{(0)})$ where~$H^{(n)}=H_n H_{n-1} \dots H_1$ and~$\e_j=(0,\dots,0,1,0,\dots,0)$ is~$1\times d$ vector with~$1$ on the~$j$-th place. Note also that if the original polygon is non-degenerate then~$H^{(n)} x^{(0)}$ and~$H^{(n)} y^{(0)}$ are non-zero vectors for any~$n$.

To ensure that~$L_n$ is a non-degenerate convex polygon and that the subdivision is genuinely random, we need the following 
\begin{assumption}\label{Asu1}
$\P(\xi\in \{0,1\})=0$ and the support of~$\xi$ contains at least two distinct points in~$(0,1)$, i.e.\ the distribution of~$\xi$ is non-degenerate.
\end{assumption}

We can define  ``thickness'' of a two-dimensional object as the smallest possible ratio between  its one-dimensional projections on the two coordinate axes of a Cartesian coordinate system (where we can orient this system arbitrarily); this quantity always lies between~$0$ and~$1$; moreover, it equals  one for a circle, and it equals  zero for any segment. The sequence of~$L_n$ converges to a ``flat figure", or simply to ``flatness", if the sequence of its thicknesses converges to zero. In the case of polygons, this definition is equivalent to
\begin{definition}
We say that the sequence of polygons~$L_n$ converges to a flat figure as~$n\to\infty$ if
$$
\lim_{n\to\infty}\frac{\Area(L_n)}{\left(\max_{j=1,\dots,d} \Vert l_j^{(n)}\Vert\right)^2}=0.
$$
Here~$\Area(L_n)$ denotes the area of the polygon~$L_n$.
\end{definition}

The main purpose of our paper is to (partially) establish the following  phenomenon.
\begin{conjecture}\label{conj}
Suppose that Assumption~\ref{Asu1} holds, then the sequence of polygons~$L_n$ converges to a flat figure almost surely as~$n\to\infty$.
\end{conjecture}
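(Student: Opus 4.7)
The plan is to reduce flatness of $L_n$ to the convergence in projective space of two orbits generated by a product of i.i.d.\ random matrices, and then to invoke Furstenberg's theory. Placing $A_1^{(n)}$ at the origin makes the vertices $P_j^{(n)}=\bigl(\sum_{k<j}x_k^{(n)},\sum_{k<j}y_k^{(n)}\bigr)$, and the shoelace formula gives
\[
2\Area(L_n) = \left|\sum_{1\le k<j\le d}\bigl(x_k^{(n)} y_j^{(n)}-x_j^{(n)} y_k^{(n)}\bigr)\right|.
\]
The right-hand side is (up to sign) a weighted sum of $2\times 2$ minors of the matrix with rows $x^{(n)},y^{(n)}$; by Cauchy--Schwarz and $\|x^{(n)}\|_2\|y^{(n)}\|_2\le\tfrac{d}{2}(\max_j\|l_j^{(n)}\|)^2$,
\[
\frac{\Area(L_n)}{(\max_j\|l_j^{(n)}\|)^2}\le C_d\,\sin\angle\bigl(x^{(n)},y^{(n)}\bigr),
\]
where the angle is computed in $\R^d$. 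Hence it suffices to prove that the projective classes $[x^{(n)}],[y^{(n)}]\in P(\R^d)$ converge almost surely to the same random limit direction.

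Since every column of $H$ sums to $1$, one has $\mathbf{1}^\T H=\mathbf{1}^\T$, so the hyperplane $V=\{v\in\R^d:\sum_jv_j=0\}$ is invariant under each $H_n$, and $x^{(0)},y^{(0)}\in V$ because the polygon is closed. Restricting to $V$, the operators $H^{(n)}|_V=H_n\cdots H_1|_V$ form an i.i.d.\ sequence in $\mathrm{GL}(V)$ (generically invertible under Assumption~\ref{Asu1}). By Furstenberg's classical theorem for products of random matrices (see e.g.\ Bougerol--Lacroix), if the semigroup $T$ generated by $\supp(H|_V)$ is simultaneously (i) \emph{contracting}, i.e.\ contains a sequence $M_k$ with $M_k/\|M_k\|$ converging to a rank-one operator, and (ii) \emph{strongly irreducible}, i.e.\ no finite union of proper linear subspaces of $V$ is $T$-invariant, then the top Lyapunov exponent is simple, and for every nonzero $v\in V$ the projective orbit $[H^{(n)}v]$ converges almost surely to a random direction $Z\in P(V)$ that does not depend on $v$. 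Applying this to $v=x^{(0)}$ and $v=y^{(0)}$ gives $\angle(x^{(n)},y^{(n)})\to 0$ a.s., which together with the first paragraph establishes the conjecture.

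The main obstacle is verifying (i) and (ii) under the mild Assumption~\ref{Asu1}. For (i), the symmetric choice $H(\xi_*,\dots,\xi_*)$ is insufficient on its own: its $V$-spectrum is $\{1-\xi_*(1-\omega^k):k=1,\dots,d-1\}$ with $\omega=e^{2\pi i/d}$, and the eigenvalues of largest modulus come in complex-conjugate pairs. Contraction must therefore be produced by composing asymmetric elements from the support, whose $V$-eigenvalues are generically simple; a Zariski-density argument on the $d$-parameter family of $H$'s should then yield a proximal element of $T$. For (ii), any $T$-invariant subspace of $V$ would impose polynomial identities on the entries of $H$ that must hold across the continuum of matrices supplied by Assumption~\ref{Asu1}; the cyclic structure of $H$, however, admits potential spurious invariants arising from nontrivial subrepresentations of $\Z/d\Z$ (particularly when $d$ is composite), and ruling these out cleanly for all $d\ge 3$ is delicate. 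This is the step that likely forces the statement to be presented as a conjecture rather than a theorem; for $d=3$, $V$ is only two-dimensional and both conditions can be verified by direct computation, which is presumably the route by which the paper obtains the exact Lyapunov-exponent rate announced in the abstract.
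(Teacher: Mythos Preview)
Your overall architecture matches the paper's exactly: reduce flatness to the angular distance $\delta(T^{(n)}\bar x,T^{(n)}\bar y)\to 0$ on the invariant hyperplane (the paper's Propositions~\ref{lemflat} and~\ref{propcond}), then invoke a Furstenberg-type theorem requiring strong irreducibility and the contraction property of the semigroup generated by the support of $T=H|_V$. Your area bound in the first paragraph is essentially the paper's Proposition~\ref{lemflat}.

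However, you misdiagnose \emph{why} the statement remains a conjecture. You suggest that verifying strong irreducibility and contraction is the open obstacle; in fact the paper carries both of these out in full. Strong irreducibility is established by first classifying all complex invariant subspaces of the ``diagonal'' matrix $T(a,\dots,a)$ (spans of subsets of its eigenvectors $v_k=(1,\eps^k,\dots,\eps^{k(d-2)})^{\sf T}$), and then showing that none of them is simultaneously invariant under the perturbed matrices $T_{a,b;k}$; a direct-sum argument from Goldsheid--Guivarc'h then upgrades irreducibility to strong irreducibility. Contraction is obtained not by finding a proximal element directly, but by passing to the Zariski closure (a group, by Goldsheid--Margulis) and exhibiting an explicit unipotent-type element there whose powers normalize to rank one.

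The genuine obstruction is the one you brush aside with ``generically invertible under Assumption~\ref{Asu1}''. One has
\[
\det(H|_V)=\prod_{i=1}^d(1-\xi_i)-(-1)^d\prod_{i=1}^d\xi_i,
\]
which is strictly positive for odd $d$ but can vanish for even $d$ (e.g.\ $\xi_i\equiv 1/2$). Assumption~\ref{Asu1} alone does not force a.s.\ invertibility when $d$ is even, and the Furstenberg/Bougerol--Lacroix machinery you cite lives in $GL(d-1,\R)$. The paper therefore proves the conjecture for all odd $d$, and for even $d$ only under the extra Assumption~\ref{Asu2} that $\prod(1-\xi_i)/\xi_i\ne 1$ a.s.; removing this hypothesis is what is left open. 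Your remark that cyclic subrepresentations of $\Z/d\Z$ might create spurious invariants is not the issue---those are handled---and the $d=3$ case is indeed dispatched by the classical $2\times2$ Furstenberg theorem, as you surmise.
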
 

Further the dynamics of the random subdivisions will be formulated as a certain model related to products of random matrices and its point limit in the projective space. Let~$\R^{d}$ (and~$\mathbb{C}^{d}$)  denote the linear space of all~$d$-dimensional real (complex, resp.) column vectors under the field of real (complex) numbers. The real (complex) projective space~$P(\R^d)$ is defined as the quotient space~$(\R^d\setminus\{0\})/\sim$, where~$\sim$ is the equivalence relation defined by~$x\sim y$, $x,y\in \R^d$ if there exists a real (complex) number~$\lambda$ such that~$x=\lambda y$. We denote~$\overline{x}$ as the equivalence class of~$x$. The projective space~$P(\R^d)$ becomes a compact metric space if we consider the following ``angular" metric
\begin{equation}\label{metric1}
\displaystyle \delta(\overline{x},\overline{y})=\sqrt{1-\frac{(x,y)^2}{||x||^2.||y||^2}}.
\end{equation}
where~$||\cdot ||$ and~$(\cdot,\cdot)$ are respectively the Euclidean norm and the Euclidean scalar product on~$\R^d$. 
One can see that~$\delta(\overline{x},\overline{y})$ is actually the sinus of the smaller angle between the lines corresponding to~$\bar x$ and~$\bar y$.

Next, each linear mapping~$A:\R^d\to \R^d$ can be generalized to~$P(\R^d)$ by setting
$$
A\overline{x}=\overline{Ax}
$$ 
for every~$x\in \R^{d}\setminus{\sf Ker}(A)$.
Let us also  define
\begin{align}\label{eqlspace}
{\cal L}=\{ v \in \R^d : v_1+v_2+\dots+v_n=0\}.
\end{align}
Observe that since~$\sum_{j=1}^d x_j^{(n)}=0$, $\sum_{j=1}^d y_j^{(n)}=0$, we have~$x^{(n)},y^{(n)}\in{\cal  L}$.

\begin{proposition}\label{lemflat}
Suppose that \begin{equation}\label{flatness}
\lim_{n\to\infty}\delta\left( H^{(n)}\overline{x},H^{(n)}\overline{y}\right)= 0 
\end{equation}
almost surely for every~$x, y\in L_n$ such that~$(x_1,y_1)$, $(x_2,y_2)$,$\dots$,$(x_d,y_d)$ are coordinates of vectors corresponding to consecutive edges of the convex~$d$-polygon in the real plane. Then~$L_n$ converges to a flat figure as~$n\to\infty$.
\end{proposition}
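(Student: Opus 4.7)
My plan is to reduce flatness of $L_n$, as defined via the area-to-max-edge-squared ratio, to the angular convergence (\ref{flatness}) by expressing the area of the polygon as an antisymmetric bilinear form evaluated on the edge-coordinate vectors $x^{(n)}$ and $y^{(n)}$.

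First I would apply the shoelace formula to the vertices $A_k^{(n)} = (X_k, Y_k)$, using the telescoping identities $X_{k+1}-X_k = x_k^{(n)}$, $Y_{k+1}-Y_k = y_k^{(n)}$ together with the closure relations $\sum_k x_k^{(n)} = \sum_k y_k^{(n)} = 0$. A short calculation then gives
$$
2\,\Area(L_n) = \Bigl|\sum_{i<j}\bigl(x_i^{(n)}y_j^{(n)} - y_i^{(n)}x_j^{(n)}\bigr)\Bigr| = \bigl|(x^{(n)})^\T M y^{(n)}\bigr|,
$$
where $M$ is the antisymmetric $d\times d$ matrix with $M_{ij}=\operatorname{sgn}(j-i)$. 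Set $c_d:=\|M\|$, a constant depending only on $d$.

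Next I would exploit the antisymmetry of $M$ to link this to the angular metric. Decomposing $y^{(n)} = \alpha x^{(n)} + z^{(n)}$ with $z^{(n)} \perp x^{(n)}$, the identity $(x^{(n)})^\T M x^{(n)} = 0$ (immediate from $M^\T = -M$) yields $(x^{(n)})^\T M y^{(n)} = (x^{(n)})^\T M z^{(n)}$, and Cauchy--Schwarz then bounds this by $c_d\,\|x^{(n)}\|\,\|z^{(n)}\|$. Since the very definition (\ref{metric1}) of $\delta$ is the sine of the angle between $x^{(n)}$ and $y^{(n)}$, the Pythagorean theorem gives $\|z^{(n)}\| = \|y^{(n)}\|\,\delta(\bar x^{(n)}, \bar y^{(n)})$, whence
$$
2\,\Area(L_n) \le c_d\,\|x^{(n)}\|\,\|y^{(n)}\|\,\delta(\bar x^{(n)}, \bar y^{(n)}).
$$

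Finally I would bound the Euclidean norms of $x^{(n)},y^{(n)}\in\R^d$ by the longest side: since $|x_j^{(n)}|,|y_j^{(n)}| \le \|l_j^{(n)}\| \le \max_k \|l_k^{(n)}\|$, we have $\|x^{(n)}\|,\|y^{(n)}\| \le \sqrt d \max_k \|l_k^{(n)}\|$. Using $x^{(n)} = H^{(n)}x^{(0)}$ and $y^{(n)} = H^{(n)}y^{(0)}$ this gives
$$
\frac{\Area(L_n)}{\bigl(\max_k\|l_k^{(n)}\|\bigr)^2} \le \frac{c_d\,d}{2}\,\delta\bigl(H^{(n)}\bar x^{(0)}, H^{(n)}\bar y^{(0)}\bigr),
$$
whose right-hand side tends to $0$ almost surely by (\ref{flatness}) applied to the pair $(x^{(0)},y^{(0)})$ corresponding to $L_0$, yielding flatness. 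The one nontrivial ingredient is the bilinear-form identity for the area; after that the argument is just elementary estimates, so I do not foresee any serious obstacle.
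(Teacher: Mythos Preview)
Your proof is correct and follows essentially the same route as the paper: both derive from the shoelace formula that $2\,\Area(L_n)$ is an antisymmetric bilinear expression in $(x^{(n)},y^{(n)})$, bound it by a constant times $\|x^{(n)}\|\,\|y^{(n)}\|\,\delta(\bar x^{(n)},\bar y^{(n)})$, and then control $\|x^{(n)}\|,\|y^{(n)}\|$ by $\sqrt d\,\max_k\|l_k^{(n)}\|$. The only cosmetic difference is in the middle step---you use the orthogonal decomposition $y=\alpha x+z$ together with $x^{\T}Mx=0$, whereas the paper invokes the Lagrange identity $\|x\|^2\|y\|^2-(x,y)^2=\sum_{i<j}(x_iy_j-x_jy_i)^2$ to reach the same bound.
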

\begin{proof}
Using the formula for~$\delta\left(\overline{x^{(n)}},\overline{y^{(n)}}\right)$ and omitting the superscript~${}^{(n)}$ for all~$x^{(n)}$ and~$y^{(n)}$ for simplicity, we obtain that
$$
\delta(\overline{x},\overline{y})^2=\frac{\left(\sum_{i=1}^d x_i^2\right)\left( \sum_{i=1}^d y_i^2\right)-\left(\sum_{i=1}^d x_i y_i\right)^2}
{\left(\sum_{i=1}^d x_i^2\right)\left( \sum_{i=1}^d y_i^2\right)}
=\frac{\sum_{1\le i<j\le d} (x_i y_j-x_j y_i)^2}
{\left(\sum_{i=1}^d x_i^2\right)\left( \sum_{i=1}^d y_i^2\right)}
=:\delta_n
$$
where~$\delta_n\to 0$ a.s.

According to a well-know formula for the signed area~$\Area(L)$ of a planar non-self-intersecting polygon~$L$ with vertices~$(a_1,b_1)$, $\dots$, $(a_d,b_d)$, see~\cite{Beyer}
$$
2 \Area(L)=\det\begin{pmatrix}a_1 &a_2\\b_1&b_2\end{pmatrix}
+\det \begin{pmatrix}a_2 &a_3\\b_2&b_3\end{pmatrix}
+\dots+
\det \begin{pmatrix}a_d &a_1\\b_d&b_1\end{pmatrix}.
$$
Since we know only the coordinates of the vectors forming the edges of polygon~$(x_i,y_i)$, $i=1,2,\dots,d$ with the obvious restriction~$\sum_{i=1}^d x_i=\sum_{i=1}^d y_i=0$, we can assume that the polygon's vertices have the coordinates
\begin{align*}
a_i=x_1+\dots+x_i,\\
b_i=y_1+\dots+y_i,
\end{align*}
$i=1,2,\dots,d$, thus yielding that~$a_d=b_d=0$ so that the last two determinants in the formula for~$2\Area(L)$ are~$0$, and hence
\begin{align*}
2\Area(L)&=\sum_{i=1}^{d-2} \det\begin{pmatrix}
a_i & a_{i+1}\\ b_i & b_{i+1}
\end{pmatrix}
=\sum_{i=1}^{d-2} 
\det\begin{pmatrix}a_i & a_i+x_{i+1}\\ b_i & b_i+y_{i+1}\end{pmatrix}
=\sum_{i=1}^{d-2} 
(a_i y_{i+1}-b_i x_{i+1})
\\
&=\left[x_1 y_2+(x_1+x_2)y_3+\dots+(x_1+x_2+\dots x_{d-2})y_{d-1}\right]\\
&-\left[y_1 x_2+(y_1+y_2)x_3+\dots+(y_1+y_2+\dots y_{d-2})x_{d-1}\right]
\\
&=\sum_{1\le i<j\le d-1} \det\begin{pmatrix}
x_i &y_i\\ x_j &y_j
\end{pmatrix}.
\end{align*}
Therefore the area~$\Area(L_n)$ of the polygon~$L_n$ satisfies
\begin{align*}
& |2\Area(L_n)|=\left| \sum_{1\le i<j\le d-1} \det 
\begin{pmatrix}
x_i &y_i\\ x_j &y_j \end{pmatrix} \right|
\le \sum_{1\le i<j\le d-1} 
\left|\det \begin{pmatrix}x_i &y_i\\ x_j &y_j\end{pmatrix} \right|\\ &
\le\sqrt{\sum_{1\le i<j\le d} (x_i y_j-x_j y_i)^2}
=\sqrt{\delta_n \left(\sum_{i=1}^d x_i^2\right)\left( \sum_{i=1}^d y_i^2\right)}. 
\end{align*}
Consequently,
\begin{align*}
\frac{\Area(L_n)}{\left(\max_j \Vert l_j^{(n)}\Vert\right)^2}
\le
\frac{1}{2}\sqrt{\delta_n \frac{\left(\sum_{i=1}^d x_i^2\right)\left( \sum_{i=1}^d y_i^2\right)}{\left(\max_{j=1,\dots,d} \left[x_j^2+y_j^2\right]\right)^2}}
\le \frac 12 \sqrt{\delta_n\cdot d \cdot d}\to 0
\end{align*}
since~$x_i^2\le \max_{j=1,\dots,d} (x_j^2+y_j^2)$ for each~$i$, and the same holds for~$y_i$.
\end{proof}

Note that~${\cal L}$ defined by~\eqref{eqlspace} is an invariant subspace of~$H$.
Therefore, we can restrict the linear  transformation~$H$ to~$\R^{d-1}$ by considering only the first~$d-1$ coordinates of~$x$ and~$y$ respectively. One can easily deduce that the restriction of the transformation~$H$ can be described by the~$(d-1)\times (d-1)$ matrix 
\begin{equation}\label{matrix2}
T=T(\xi_1,\dots,\xi_d)=  \left(\begin{matrix}
1-\xi_1 & \xi_2 & 0 & \dots & 0& 0\\
0 & 1-\xi_2 & \xi_3 & \dots & 0& 0\\
\vdots &\vdots & \vdots &\ddots & \vdots &\vdots \\
 0           & 0& 0& \dots  & 1-\xi_{d-2} & \xi_{d-1}\\
 -\xi_d & -\xi_d& -\xi_d& \dots & -\xi_d & 1-\xi_{d-1}-\xi_d 
\end{matrix}\right)
\end{equation}
and then the linear relation~\eqref{linear} still has the same formulation in~$\R ^{d-1}$ for~$T$. The condition~\eqref{flatness} for the matrix~\eqref{matrix2} now can be restated as
\begin{proposition}\label{propcond}
Let~$\{T_n\}_{n\ge1}$ be a sequence of random matrices, which are independent copies of the matrix~$T$ in~\eqref{matrix2} and let~$T^{(n)}=T_nT_{n-1}....T_2T_1$. Assume that
\begin{align}\label{cond}
\lim_{n\to\infty}\ 
\delta(T^{(n)}\bar x,T^{(n)}\bar y)= 0  
\end{align}
almost surely for any~$x=(x_1,...,x_{d-1})^{\T}  ,y=(y_1,...,y_{d-1})^{\T}  \in \R^{d-1}$, such that~$(x_1,y_1), (x_2,y_2),...,$ $(x_{d-1},y_{d-1})$ are coordinates of~$d-1$ consecutive edges of a convex~$d$-polygon in the real plane. Then~$L_n$ converges to a flat figure as~$n\to\infty$.
\end{proposition}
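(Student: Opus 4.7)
The plan is to reduce Proposition \ref{propcond} to Proposition \ref{lemflat} through the natural linear isomorphism between the invariant subspace $\mathcal{L}$ and $\mathbb{R}^{d-1}$. First I would introduce the coordinate projection $\phi:\mathcal{L}\to \mathbb{R}^{d-1}$ defined by $\phi(v_1,\ldots,v_d)=(v_1,\ldots,v_{d-1})$. Because any $v\in\mathcal{L}$ satisfies $v_d=-\sum_{i=1}^{d-1}v_i$, this $\phi$ is a linear bijection, and a direct matrix computation (precisely the one that motivated \eqref{matrix2}) gives the intertwining identity $T\circ\phi=\phi\circ H|_\mathcal{L}$; iterating yields $T^{(n)}\phi=\phi\, H^{(n)}|_\mathcal{L}$ for every $n$.

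Next I would lift $\phi$ to the projective level as $\bar\phi:P(\mathcal{L})\to P(\mathbb{R}^{d-1})$, where $P(\mathcal{L})$ carries the angular metric $\delta$ inherited from $P(\mathbb{R}^d)$. Since $\bar\phi$ is a continuous bijection from the compact Hausdorff space $P(\mathcal{L})$ onto $P(\mathbb{R}^{d-1})$, it is automatically a homeomorphism. Consequently, for any sequences $\bar u_n,\bar w_n\in P(\mathcal{L})$,
\[
\delta(\bar u_n,\bar w_n)\to 0\quad\Longleftrightarrow\quad \delta(\bar\phi(\bar u_n),\bar\phi(\bar w_n))\to 0.
\]
I expect this topological equivalence of the two angular metrics to be the main (and really only) point that needs care, but it follows from a standard subsequence/compactness argument once one notes that both $\delta$'s vanish exactly on pairs of parallel vectors and that $\phi^{-1}$ preserves parallelism.

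Finally, given $x,y\in\mathbb{R}^{d-1}$ as in the hypothesis, I would take their unique preimages $x^*=\phi^{-1}(x),\,y^*=\phi^{-1}(y)\in\mathcal{L}$. Their $d$ coordinates describe the $d$ consecutive edges of the same convex $d$-polygon in the real plane, so $x^*,y^*$ satisfy the hypothesis of Proposition \ref{lemflat}. Combining the intertwining identity with the displayed equivalence, the almost sure convergence $\delta(T^{(n)}\bar x,T^{(n)}\bar y)\to 0$ supplied by \eqref{cond} transfers to $\delta(H^{(n)}\bar x^*,H^{(n)}\bar y^*)\to 0$ a.s., and Proposition \ref{lemflat} then yields that $L_n$ converges to a flat figure almost surely, completing the proof.
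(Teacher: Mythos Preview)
Your argument is correct. Both you and the paper reduce Proposition~\ref{propcond} to Proposition~\ref{lemflat} via the identification of $\mathcal L$ with $\R^{d-1}$, so the overall architecture is the same; the difference lies entirely in how the equivalence ``$\delta\to 0$ in $P(\R^{d-1})$ $\Longleftrightarrow$ $\delta\to 0$ in $P(\mathcal L)$'' is established. The paper works quantitatively: it normalises $\|x\|=\|y\|=1$, writes $u=y-cx$ with $c=(x,y)$, and obtains the explicit inequality $\tilde\delta_n^2\le d\,\delta_n^2$. You instead invoke the soft topological fact that $\bar\phi:P(\mathcal L)\to P(\R^{d-1})$ is a continuous bijection of compact Hausdorff spaces, hence a homeomorphism, and then use compactness (equivalently, uniform continuity of $\bar\phi$ and $\bar\phi^{-1}$) to transfer $\delta\to 0$ for pairs of moving sequences. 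Your route is cleaner and requires no computation, while the paper's route delivers an explicit constant relating the two angular distances --- information your argument does not (and need not) provide for the stated proposition. One small remark: the implication you need is not a formal consequence of ``homeomorphism'' alone, since both points of the pair move; it really uses uniform continuity, which you correctly extract from compactness via the subsequence argument you sketch.
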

\begin{proof}
Basically, we need to show the following geometric fact. Suppose that~$x^{(n)}=(x^{(n)}_1,\dots,x^{(n)}_{d-1})$
and~$y^{(n)}=(y^{(n)}_1,\dots,y^{(n)}_{d-1})$.
are such that~$\delta_n:=\delta(\overline{x^{(n)}},\overline{y^{(n)}})\to 0$ as~$n\to\infty$, then~$\tilde\delta_n:=\delta(\overline{\tilde x^{(n)}},\overline{\tilde y^{(n)}})\to 0$, 
where~$\tilde x^{(n)}=(x^{(n)}_1,\dots,x^{(n)}_d)$ and~$\tilde y^{(n)}=(y^{(n)}_1,\dots,y^{(n)}_d)$ 
with~$x^{(n)}_d=-\sum_{i=1}^{d-1} x^{(n)}_i$,
$y^{(n)}_d=-\sum_{i=1}^{d-1} y^{(n)}_i$,
for all~$n$. Observe that~$\delta_n$ and~$\tilde\delta_n$ represent the angular distance on the spaces~$P(\R^{d-1})$ and~$P(\R^{d})$ respectively.

Indeed, suppose that~$\delta_n<\epsilon$ for some very small~$\epsilon>0$. Let us from now on also omit the superscript~${}^{(n)}$ as this does not create a confusion. Without loss of generality we can assume that~$\| x\|=\|y\|=1$, that is, $\sum_{i=1}^{d-1} x_i^2=1=\sum_{i=1}^{d-1} x_i^2$. Denote by~$c=(x,y)=\sum_{i=1}^{d-1} x_i y_i=\cos(x,y)$, so that~$c^2+\delta_n^2=1$.
We have
\begin{align}\label{eqdeltadelta}
\tilde\delta_n^2 &=
\frac{(1+x_d^2)(1+y_d^2)-(\sum_{i=1}^d x_i y_i)^2}
{(1+x_d^2)(1+y_d^2)}
=
\frac{(1+x_d^2)(1-c^2)+( y_d -c x_d)^2}
{(1+x_d^2)(1+y_d^2)} \nonumber
\\&
\le (1-c^2)+( y_d -c x_d)^2
=\delta_n^2+\left(\sum_{i=1}^{d-1} u_i\right)^2
\end{align}
where~$u=y-cx=(y_1-c x_1,\dots,y_{d-1}-c x_{d-1})$ is the difference between vector~$y$ and the projection of~$y$ on~$x$. Consequently, $u$ is orthogonal to~$x$ and~$\|u\|^2=\|y\|^2-\|cx\|^2=1-c^2=\delta_n^2$.
By the inequality between the quadratic and arithmetic means~$|\sum_{i=1}^{d-1} u_i|^2\le (d-1) \| u\|^2$ hence~\eqref{eqdeltadelta} implies that~$\tilde\delta_n^2\le [1+(d-1)] \delta_n^2 \le d\epsilon^2$.
\end{proof}

The rest of the paper is organized as follows.
In Section~\ref{Secd=3}, by applying the classical Furstenberg's theorem for products of~$2\times 2$ invertible random matrices, we will show that~\eqref{cond} is fulfilled for~$d=3$ (Theorem~\ref{mainthm1}). In a higher dimensional case, it is necessary to show that the closed semigroup generated by the support of the random matrix~$T$ is strongly irreducible and contracting. We will  show that~\eqref{cond} holds for any {\it odd} number~$d>3$ in Section~\ref{Secd>=4}. For the remaining case when~$d\ge 4$ is {\it even}, we will have to require that the random matrix~$T$ in~\eqref{matrix2} is invertible almost surely. We actually believe that this extra requirement is not really needed, however we are unable to show the result without this extra condition. The results are summarized in Theorem~\ref{mainthm2}. The exponential rate of convergence of random polygons will be considered in Section~\ref{Secrate}, see   Theorems~\ref{mainthm3}, \ref{mainthm4} and~\ref{mainthm5}. 

Finally, in Section~\ref{sec_gener} we mention some generalizations of our model, as well as open problems.
Also note that throughout the the paper we denote by~$GL(d,\R)$ the group of~$d\times d$ invertible matrices of real numbers and~$SL^{\pm}(d,\R)$ the closed subgroup of~$GL(d,\R)$ containing all matrices with determinant~$+1$ or~$-1$.

\section{Random subdivision of triangles (\texorpdfstring{$d=3$}{})}
\label{Secd=3}
\begin{proposition} {\rm(Furstenberg's theorem, Theorem~II.4.1 in~\cite{BL}, page~30)}  \label{thmFur}
Let~$\mu$ be a probability measure on~$GL(2,\R)$ and~$G_{\mu}$ be the smallest closed subgroup of~$GL(2,\R)$ which contains the support of~$\mu$. Suppose that the following hold: 
\begin{itemize}
\item[(i)]  $G_{\mu}\subset SL^{\pm}(2,\R)$;
\item[(ii)] $G_{\mu}$ is not compact;
\item[(iii)] There does not exist any common invariant finite union of one-dimensional subspaces of~$\R^{2}$ for all matrices of~$G_{\mu}$.
\end{itemize}
Let~$\{M_n, n\ge 1\}$ be a sequence of independent random matrices with distribution~$\mu$ and~$\overline{ x},\overline{y}\in P(\R^2)$. Then
$$
\lim_{n\to\infty}\delta\left(M_nM_{n-1}...M_1\overline{ x},M_nM_{n-1}...M_1\overline{y}\right)=0.
$$ 
\end{proposition}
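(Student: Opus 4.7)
The plan is to reduce the projective convergence to the separation of singular values of the product $M^{(n)} := M_n M_{n-1} \cdots M_1$. Writing the singular value decomposition $M^{(n)} = U_n D_n V_n^{\T}$ with $U_n, V_n \in O(2)$ and $D_n = \mathrm{diag}(\sigma_1^{(n)}, \sigma_2^{(n)})$, $\sigma_1^{(n)} \ge \sigma_2^{(n)} > 0$, condition (i) forces $\sigma_1^{(n)} \sigma_2^{(n)} = 1$, so $\sigma_2^{(n)}/\sigma_1^{(n)} = (\sigma_1^{(n)})^{-2}$. A short direct computation, splitting $x$ and $y$ in the orthonormal basis given by the columns of $V_n$, yields
\[
\delta(M^{(n)} \bar x, M^{(n)} \bar y) \;\le\; \frac{C}{(\sigma_1^{(n)})^{2}}
\]
for a constant $C = C(\bar x, \bar y, V_n)$ that blows up only if $V_n^{\T} x$ or $V_n^{\T} y$ aligns with the ``contracted'' axis $\bar e_2$. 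Hence it suffices to prove (a) $\sigma_1^{(n)} \to \infty$ almost surely, and in fact exponentially fast, and (b) the alignment $V_n^{\T} \bar x \to \bar e_2$ does not happen too often along the sequence.

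The core difficulty is the strict positivity of the top Lyapunov exponent $\lambda := \lim_n n^{-1} \log \|M^{(n)}\|$. Existence of $\lambda$ follows from Kingman's subadditive ergodic theorem applied to $a_n := \log \|M^{(n)}\|$, which is subadditive by submultiplicativity of the norm. Positivity I would argue by contradiction: assume $\lambda = 0$ and choose a $\mu$-stationary probability measure $\nu$ on the compact projective space $P(\R^2)$, which exists by a standard fixed-point argument for the Markov operator induced by $\mu$. Using the cocycle $\phi(g, \bar u) := \log(\|gu\|/\|u\|)$, one has $\int\int \phi \, d\mu \, d\nu = \lambda = 0$. A martingale convergence argument applied to $\log \|M^{(n)} u\|$ (a nonnegative supermartingale under a suitable change of measure) then forces $\nu$ to be purely atomic and its support to be a finite $G_\mu$-invariant union of projective points, that is, a finite union of one-dimensional subspaces of $\R^2$ invariant under every element of $G_\mu$. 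This contradicts hypothesis (iii). Hypothesis (ii) is used to rule out the degenerate situation in which $G_\mu$ is compact and no unbounded growth is possible in the first place.

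For step (b), I would invoke that the law of the contracted direction of $V_n$ converges weakly to an auxiliary stationary measure on $P(\R^2)$ which, under (iii), assigns zero mass to every single projective point. A Borel--Cantelli-style bound then shows that the alignment factor in $C(\bar x, \bar y, V_n)$ can blow up only at a sub-exponential rate, whereas $(\sigma_1^{(n)})^{-2}$ decays at a strictly exponential rate by (a); combining the two yields $\delta(M^{(n)} \bar x, M^{(n)} \bar y) \to 0$ a.s. The step I expect to absorb most of the effort is the positivity of $\lambda$: this is the genuine content of Furstenberg's contribution and the only place where the irreducibility hypothesis (iii) enters essentially. Once $\lambda > 0$ is in hand, the SVD reduction and the alignment argument are comparatively routine.
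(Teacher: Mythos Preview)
The paper does not prove this proposition at all: it is quoted verbatim as Theorem~II.4.1 from Bougerol--Lacroix and used as a black box in the proof of Theorem~\ref{mainthm1}. So there is no ``paper's own proof'' to compare against.

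As a sketch of the actual Furstenberg argument, your outline is broadly in the right spirit --- SVD reduction, positivity of the top Lyapunov exponent, control of the alignment of $V_n$ with the contracted direction --- and this is indeed the architecture of the proof in \cite{BL}. Two points are worth flagging. First, your use of Kingman's theorem presupposes $\E\log^+\|M_1\|<\infty$, which is part of the hypotheses in \cite{BL} but is silently omitted in the paper's restatement; without it the limit $\lambda$ need not exist. Second, the crucial step --- deducing from $\lambda=0$ that every $\mu$-stationary measure on $P(\R^2)$ is atomic with finite $G_\mu$-invariant support --- is where the real work lies, and your description (``a martingale convergence argument \dots forces $\nu$ to be purely atomic'') glosses over the mechanism. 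In \cite{BL} this goes through showing that $\lambda=0$ forces the cocycle $\log(\|gu\|/\|u\|)$ to be a coboundary $\nu$-a.e., which in turn constrains the support of $\nu$; the martingale that converges is $\|M^{(n)}u\|/\|M^{(n)}\|$ rather than $\log\|M^{(n)}u\|$, and one needs strong irreducibility to rule out a non-atomic stationary measure compatible with $\lambda=0$. Your step (b) is also more delicate than a Borel--Cantelli bound: one shows that for $\nu$-almost every $\bar x$ the sequence $\|M^{(n)}x\|/\|M^{(n)}\|$ stays bounded away from zero, and then uses that $\nu$ has no atoms to extend this to every $\bar x$.
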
 

Note that when~$M_1$ is invertible almost surely and~$\det(M_1)$ is possibly not equal to~$\pm 1$, it is enough to verify the above conditions for the group~$G_{\tilde\mu}$ generated by all~$\tilde{M}=(\det M)^{-1/2} M$, where~$M$ is any invertible matrix in the support of~$\mu$. 

\begin{theorem}\label{mainthm1}
Conjecture~\ref{conj} is fulfilled for~$d=3$.
\end{theorem}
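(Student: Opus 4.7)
The plan is to apply Furstenberg's theorem (Proposition~\ref{thmFur}) to the $2\times 2$ random matrix
$$
T \;=\; \begin{pmatrix} 1-\xi_1 & \xi_2 \\ -\xi_3 & 1-\xi_2-\xi_3 \end{pmatrix}
$$
from~\eqref{matrix2} with $d=3$, and then conclude via Proposition~\ref{propcond}. A direct expansion gives
$$
\det T \;=\; (1-\xi_1)(1-\xi_2)(1-\xi_3)+\xi_1\xi_2\xi_3 \;>\; 0 \text{ a.s.,}
$$
since under Assumption~\ref{Asu1} each $\xi_i$ lies in $(0,1)$ almost surely. Hence $T$ is invertible a.s., and the normalized matrix $\tilde T := T/\sqrt{\det T}$ lies in $SL(2,\R)\subset SL^{\pm}(2,\R)$, so the closed subgroup $G_{\tilde\mu}$ generated by the support of the law of $\tilde T$ automatically satisfies condition (i) of Proposition~\ref{thmFur}.

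For non-compactness (condition~(ii)), I would use the classical fact that every compact subgroup of $SL(2,\R)$ is conjugate to $SO(2)$ and so fixes a unique point in the upper half-plane $\mathbb{H}$. By Assumption~\ref{Asu1} there exist $p\neq q$ in $(0,1)\cap\supp(\xi)$, and by independence both $\tilde T(p,p,p)$ and $\tilde T(q,p,p)$ lie in $G_{\tilde\mu}$. Viewing $\tilde T$ as a M\"obius transformation, its fixed points are the roots of
$$
\xi_3 z^2 + (\xi_2+\xi_3-\xi_1) z + \xi_2 \;=\; 0.
$$
A short calculation shows the fixed points of $\tilde T(p,p,p)$ are $(-1\pm i\sqrt{3})/2$, while those of $\tilde T(q,p,p)$ are $\bigl((q-2p) \pm \sqrt{q(q-4p)}\bigr)/(2p)$; these differ from $(-1\pm i\sqrt{3})/2$ whenever $p\neq q$ (checking separately the elliptic, parabolic, and hyperbolic subcases). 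Hence the two matrices have no common fixed point in $\overline{\mathbb{H}}$, so $G_{\tilde\mu}$ is not contained in any conjugate of $SO(2)$ and is non-compact.

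For strong irreducibility (condition~(iii)), suppose for contradiction that some finite collection of lines $F=\{\ell_1,\ldots,\ell_k\}\subset P(\R^2)$ is $G_{\tilde\mu}$-invariant. Each element of $G_{\tilde\mu}$ permutes $F$, so every elliptic element has a rotation angle that is a rational multiple of $\pi$. The rotation angle $\theta(p)$ of $\tilde T(p,p,p)$ is given by $\cos\theta(p)=(2-3p)/\bigl(2\sqrt{1-3p+3p^2}\bigr)$, continuous and non-constant on $(0,1)$. If $\supp(\xi)$ is infinite, $\{\theta(p)/\pi : p\in\supp(\xi)\}$ cannot lie in $\mathbb{Q}$ and we reach a contradiction. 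If $\supp(\xi)$ is finite, I would invoke the mixed matrices $\tilde T(\xi_1,\xi_2,\xi_3)$ with unequal entries: by the same quadratic, their fixed points in $\overline{\mathbb{H}}$ vary with the triple and produce rotations around distinct axes, so no single finite $F$ can be simultaneously invariant.

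With all three hypotheses of Proposition~\ref{thmFur} established, one obtains $\delta(T^{(n)}\bar x, T^{(n)}\bar y)\to 0$ a.s.\ for every $\bar x,\bar y\in P(\R^2)$, and Proposition~\ref{propcond} then yields that $L_n$ converges to a flat figure. The main obstacle is verifying strong irreducibility in the finite-support regime when the rotation angles $\theta(p)/\pi$ all happen to be rational; handling this case by a careful analysis of the fixed points of mixed triples is the main source of technical work, though the structure of $T$ makes it tractable.
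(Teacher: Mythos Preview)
Your verification of condition~(i) and your fixed-point argument for condition~(ii) are correct, albeit different from the paper's approach.

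The genuine gap is in condition~(iii). You correctly note that if $G_{\tilde\mu}$ preserves a finite $F \subset P(\R^2)$ then every elliptic element must have finite order on $P(\R^2)$ and hence rotation angle in $\pi\Q$. But your next step --- ``if $\supp(\xi)$ is infinite, $\{\theta(p)/\pi : p\in\supp(\xi)\}$ cannot lie in $\Q$'' --- does not follow: continuity and non-constancy of $\theta$ on $(0,1)$ say nothing about its values on an arbitrary infinite subset of $(0,1)$; the set $\{p\in(0,1) : \theta(p)/\pi \in \Q\}$ is countably infinite, and $\supp(\xi)$ could perfectly well sit inside it. For the finite-support case you give no argument beyond the claim that rotations about distinct centres cannot share a finite invariant set, which you yourself flag as ``the main source of technical work'' and leave undone. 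Since Assumption~\ref{Asu1} allows $\supp(\xi)$ to consist of just two points, this case cannot be dismissed.

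The paper bypasses all of this by constructing a single \emph{parabolic} element
\[
Q \;=\; T(a,b,a)\,T(a,b,b)^{-1}\,T(b,a,b)\,T(b,a,a)^{-1} \;=\; \begin{pmatrix} 1 & 0 \\ t & 1 \end{pmatrix}, \qquad t \ne 0,
\]
which lies in $G_{\tilde\mu}$. This one matrix does both jobs: $\|Q^m\|\to\infty$ gives non-compactness, and since $Q^m(1,r)^{\T}$ has slope $mt+r$, any finite invariant union of lines containing a line other than $\spn(0,1)^{\T}$ is impossible; finally $T(a,a,a)(0,1)^{\T}=(a,1-2a)^{\T}$ rules out $\spn(0,1)^{\T}$ alone. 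If you want to rescue your own approach, the efficient move is to exhibit a parabolic or hyperbolic element of $G_{\tilde\mu}$ --- whose finite invariant sets in $P(\R^2)$ are contained in its fixed-line set --- rather than to control the arithmetic of elliptic rotation angles.
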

\begin{proof}
When~$d=3$ the random matrix~$T$ equals
$$
T=T(\xi_1,\xi_2,\xi_3)=\begin{pmatrix}
1-\xi_1 & \xi_2\\
-\xi_3 &1-\xi_2-\xi_3
\end{pmatrix}
$$
where~$\xi_1,\xi_2,\xi_3$ are i.i.d.\ copies of~$\xi$. Let~$\mu$ be the probability measure associated with the random matrix~$T(\xi_1,\xi_2,\xi_3)$ . Observe that~$\det(T)=\xi_1\xi_2\xi_3+(1-\xi_1)(1-\xi_2)(1-\xi_3)>0$ as long as~$\xi_1,\xi_2,\xi_3\in (0,1)$,  thus~$\tilde T=(\det T)^{-1/2} T$ is a.s.\ well-defined. Let~${G}_{\mu}$ be the group generated by all the invertible matrices in the support of~$\mu$ and~$G_{\tilde\mu}$ be the group generated by all~$\tilde{T}$, where~$T\in{G}_{\mu}$.  
Since~$\det(\tilde T(\xi_1,\xi_2,\xi_3))=1$ for all possible~$\xi_1,\xi_2,\xi_3$ and the determinant of a product of two matrices equals the product of their determinants, we have~$\det(\tilde{T})=1$ for all~$\tilde{T}\in G_{\tilde{\mu}}$. Consequently, condition~(i) of Proposition~\ref{thmFur} is fulfilled.

Now let us verify condition~(ii), i.e.\ that the group~$G_{\tilde{\mu}}$ is not compact. From Assumption~\ref{Asu1} it follows that we can choose~$a,b\in\supp\xi$ such that~$a,b\in(0,1)$ and~$a\ne b$. Let 
\begin{align}\label{Qt}
Q=T(a,b,a)\ T(a,b,b)^{-1}
\ T(b,a,b) \ T(b,a,a)^{-1}
=\begin{pmatrix}
1 & 0   \\  t & 1
\end{pmatrix}, 
\end{align}
where~$$t=-\frac{(a-b)^2}{2ab+b^2-a-2b+1}.$$
Since~$a\ne b$ and~$2ab+b^2-a-2b+1=(a+b-1)^2+a(1-a)>0$ the quantity~$t$ is well-defined and negative.
Observe that~$Q\in G_{\tilde{\mu}}$ and hence 
$$
Q^m=\begin{pmatrix}
1 & 0   \\  mt & 1
\end{pmatrix} \in G_{\tilde{\mu}}
$$ 
as well. Since~$||  Q^m ||\sim m\to \infty$ as~$m\to\infty$,  the group~$G_{\tilde{\mu}}$ is indeed not compact.

Finally, we need to check the condition~(iii) of Theorem~\ref{thmFur}, that is,  that~$G_{\tilde{\mu}}$ is strongly irreducible, or equivalently that~$G_{\mu}$ is strongly irreducible. Suppose the contrary, i.e.\ there is a union~$L$ of one-dimensional subspaces of~$\R^2$ such that~${T}(L)=L$ for any~$\tilde{T}\in G_{\tilde{\mu}}$. 
Let~$L=V_1\cup V_2 \cup \dots \cup V_k$, $k\ge 1$.

First, suppose that~$L$ contains a vector of the form~$(x,y)^{\T}$ such that~$x\ne 0$. Then at least one of~$V_i$ is the linear span of~$v=(1,r)^{\T}$, $r\in\R$; without loss of generality let this be~$V_1$. Since~$Q$ defined by~\eqref{Qt} belongs to~$G_{\mu}$, for all~$m=1,2,\dots$ we must have~$Q^m\in G_{\mu}$  and  thus~$Q^m L \subseteq L$. The latter implies that~$v_m:=Q^m v\in L$.
However, the slopes of the vectors~$v_m$
equal~$mt+r$ which  take distinct values for different values of~$m$,  therefore~$L$ cannot be a union of a {\em finite} number of linear subspaces, leading to a contradiction. 

Therefore, the only candidates for~$V_i$ can be linear spaces spanned by~$(0,1)^{\T}$. To show that this is not possible either, pick any~$a\in (0,1)^{\T}$ which is in the support of~$\xi$, then
$$
T(a,a,a)\begin{pmatrix}
0\\ 1
\end{pmatrix} =
\begin{pmatrix}a  \\ 1-2 a\end{pmatrix}\in L.
$$
Hence there must be a vector in~$L$ whose first coordinate is non-zero, which leads to the situation already considered above.

Consequently, the conditions of the Furstenberg's theorem~\ref{thmFur} are fulfilled, implying a.s.\  convergence to flatness in case~$d=3$.
\end{proof}

\section{General case (\texorpdfstring{$d\ge4$}{}) } 
\label{Secd>=4}
We start with a few definitions.
\begin{definition}
We say that a family~$\mathcal{H}$ of~$d\times d$ matrices is irreducible in~$\R^d$ if there exists no proper linear subspace~$L$ of~$\R^d$ such that~$H(L)=L$ for all~$H\in \mathcal{H}$.
\end{definition}
\begin{definition}
We say that a family~$\mathcal{H}$ of~$d\times d$ matrices is {\em strongly} irreducible in~$\R^d$ if there exists no union~$L$ of finite number of proper linear subspaces of~$\R^d$ such that~$H(L)=L$ for all~$H\in \mathcal{H}$.
\end{definition}

\begin{definition}
We say a family~$\mathcal{H}$ of~$d\times d$ matrices  has {\em contraction property} if there is a sequence of elements~$\{A_n\}_{n\ge 1}\subset \mathcal{H}$ such that~$||A_n||^{-1}A_n$ converges to a rank one matrix. 
\end{definition}

We will make use of the following
\begin{proposition}[Theorem~III.4.3 in~\cite{BL}, p.~56]\label{th343}
Let~$A_i$ be a sequence of i.i.d.\ random matrices in~$GL(d,\R)$ with common distribution~$\mu$. Let~$\SSS_\mu$ be the smallest closed semigroup generated by its support.
Suppose that~$\SSS_\mu \subset GL(d,\R)$ is strongly irreducible and contracting. Then for any~$\overline{x},\overline{y}\in{P}(\R^d)$
$$
\lim_{n\to\infty} \delta(A_n\dots A_1 \overline{x}, A_n\dots A_1 \overline{y})=0\text{ a.s.}
$$
\end{proposition}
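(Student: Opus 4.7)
The plan is to analyze the induced Markov chain on the compact metric space $P(\R^d)$: write $\bar X_n = A_n \cdots A_1 \bar x$ and treat this as a Feller chain with transition operator $Pf(\bar x) = \int f(A \bar x) \, d\mu(A)$. The conclusion $\delta(A_n \cdots A_1 \bar x, A_n \cdots A_1 \bar y) \to 0$ is then a coupling statement between two such chains started at $\bar x$ and $\bar y$ and driven by the same sequence of matrices.

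First I would establish the existence of a stationary probability measure $\nu$ on $P(\R^d)$ via a compactness-plus-Feller-continuity argument applied to Ces\`aro averages, and then use strong irreducibility to force $\nu$ to be \emph{proper}, i.e., to assign zero mass to every proper projective subspace; otherwise the minimal subspace carrying positive $\nu$-mass would produce an invariant finite union of proper subspaces, contradicting the hypothesis. Second, the contraction property yields a sequence $B_k \in \SSS_\mu$ with $\|B_k\|^{-1} B_k \to u v^\T$ of rank one; combined with irreducibility, one deduces that the suitably normalized backward products $A_1 A_2 \cdots A_n$ converge almost surely to a random rank-one matrix $u_\omega v_\omega^\T$. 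Equivalently, for a deterministic $\bar x \notin v_\omega^\perp$ the forward image $A_n \cdots A_1 \bar x$ converges a.s.\ to the random line $\bar u_\omega$. Third, properness of the analogous stationary measure on the dual projective space (strongly irreducible for the transpose semigroup) guarantees that a deterministic $\bar x$ a.s.\ avoids the random kernel $v_\omega^\perp$; applying this to both $\bar x$ and $\bar y$ forces common a.s.\ convergence to the same random line $\bar u_\omega$, proving the claim.

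The main technical obstacle is proving the a.s.\ convergence of the normalized backward products to a rank-one matrix, together with the diffuseness of the distribution of $v_\omega^\perp$ on the dual $P(\R^d)^*$. Both rest on the subtle interplay between strong irreducibility (which prevents the support of any limit measure from being algebraically constrained to a finite union of subspaces) and contraction (which forces the limiting rank to drop to one in the first place). In the classical Bougerol--Lacroix treatment this is carried out through an ergodic-theoretic analysis of the cocycle $\log \|A_n \cdots A_1 x\|$ and Furstenberg's integral formula for the top Lyapunov exponent, culminating in the statement that the top exponent is strictly larger than the next; the spectral gap $\gamma_1 - \gamma_2 > 0$ is the analytic heart of the argument and is what ultimately drives the exponential collapse of $\delta$.
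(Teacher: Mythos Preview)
The paper does not supply its own proof of this proposition: it is quoted verbatim as Theorem~III.4.3 of Bougerol--Lacroix~\cite{BL} and used as a black box. So there is no ``paper's proof'' to compare against; your sketch is effectively an outline of the original Bougerol--Lacroix argument.

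That said, your outline contains a genuine confusion that would derail the argument as written. You assert that the normalized \emph{backward} products $A_1A_2\cdots A_n$ converge a.s.\ to a rank-one matrix $u_\omega v_\omega^{\T}$, and then claim ``equivalently'' that the \emph{forward} image $A_n\cdots A_1\bar x$ converges a.s.\ to the line $\bar u_\omega$. These are not equivalent. The backward product $Y_n=A_1\cdots A_n$ and the forward product $S_n=A_n\cdots A_1$ have the same distribution for each fixed $n$, but entirely different almost-sure behaviour: $Y_n/\|Y_n\|$ does converge a.s.\ (its range direction stabilizes because new factors are appended on the right), whereas $S_n\bar x$ is a Markov chain on $P(\R^d)$ that converges only in distribution to the stationary measure $\nu$, not almost surely to a fixed line. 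The statement to be proved, $\delta(S_n\bar x,S_n\bar y)\to 0$, is a \emph{coupling} statement---both orbits are squeezed together---not a convergence statement for either orbit individually.

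The correct mechanism (and what Bougerol--Lacroix actually do) is to observe that $S_n^{\T}=A_1^{\T}\cdots A_n^{\T}$ is a right product for the transpose semigroup, so the top \emph{right} singular direction of $S_n$ converges a.s.\ to some random $\bar w_\omega$; strong irreducibility then ensures $\bar x,\bar y\notin w_\omega^{\perp}$ a.s., and the ratio $\sigma_2(S_n)/\sigma_1(S_n)\to 0$ forces $\delta(S_n\bar x,S_n\bar y)\to 0$. Your final paragraph about the spectral gap $\gamma_1-\gamma_2>0$ is the right analytic core; the error is only in the passage linking backward-product convergence to forward-orbit convergence.
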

Note that, when~$A_1$ is only invertible almost surely, it is enough to verify the strong irreducibility and contraction condition for the semigroup~$\tilde{S}_{\mu}$ generated by all~$\tilde{A}=(|\det A|)^{-1/d} A$, where~$A$ is any invertible matrix in the support of~$\mu$.
In our case the measure~$\mu$ corresponds to random matrices of type~$T=T(\xi_1,...,\xi_d)$ defined by~\eqref{matrix2}. Observe that
$$
\det(T)=\prod_{i=1}^d (1-\xi_i)-(-1)^d\prod_{i=1}^d \xi_i.
$$
Thus we have~$|\det(T)|\le 2$; also obviously~$\det(T)>0$ almost surely for any odd~$d\ge 3$; however, if~$d$ is an even number, we need the following  invertibility
\begin{assumption}\label{Asu2}
If~$d$ is an even number, we assume that
$$
\prod_{i=1}^d\frac{1-\xi_i}{\xi_i}\neq 1
$$
almost surely.
\end{assumption} 

The main result of this Section is
\begin{theorem}\label{mainthm2}
Conjecture~\ref{conj} is fulfilled for all odd~$d\ge 3$, and under Assumption~\ref{Asu2} also for all even~$d\ge 4$.
\end{theorem}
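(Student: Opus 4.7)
The overall strategy, flagged at the end of Section~\ref{Secd=3}, is to apply Proposition~\ref{th343} to the measure~$\mu$ associated with the random matrix~$T=T(\xi_1,\dots,\xi_d)$ of~\eqref{matrix2}, and then to invoke Proposition~\ref{propcond} to convert projective convergence into flatness of~$L_n$. The theorem thus reduces to three verifications about the normalised closed semigroup~$\tilde\SSS_\mu$ generated by all~$\tilde T = |\det T|^{-1/(d-1)}T$: almost-sure invertibility of~$T$, strong irreducibility on~$\R^{d-1}$, and the contraction property. Invertibility is immediate from the expansion
\[
\det T = \prod_{i=1}^d(1-\xi_i) - (-1)^d \prod_{i=1}^d \xi_i,
\]
which for odd~$d$ is a sum of two positive terms (so~$\det T>0$ a.s.\ under Assumption~\ref{Asu1} alone), while for even~$d$ it vanishes iff~$\prod_i(1-\xi_i)/\xi_i=1$, precisely what Assumption~\ref{Asu2} excludes.

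For strong irreducibility, the plan is to argue by contradiction: assume a finite union~$L=V_1\cup\dots\cup V_k$ of proper linear subspaces of~$\R^{d-1}$ is invariant under every element of~$\tilde\SSS_\mu$, fix two distinct values~$a,b\in\supp\xi\cap(0,1)$ furnished by Assumption~\ref{Asu1}, and, imitating the construction of~$Q$ in~\eqref{Qt}, produce a non-trivial unipotent element~$U$ in the group generated by~$\SSS_\mu$ by combining matrices~$T(\xi_1,\dots,\xi_d)$ with suitable patterns of~$a$'s and~$b$'s. The integer powers of~$U$ then shear any starting vector through infinitely many distinct directions, contradicting finiteness of~$L$; the degenerate case in which every~$V_i$ lies inside the invariant hyperplane of~$U$ is handled by cyclically shifting the positions of~$a$ and~$b$ in the arguments of~$T$ to construct a second, transversely-acting shear.

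For contraction, it suffices to exhibit a single \emph{proximal} element of~$\tilde\SSS_\mu$---a matrix with one eigenvalue of strictly largest modulus---since its normalised integer powers then converge to the rank-one projection onto the dominant eigenline. The symmetric candidate~$T(a,\dots,a)$ is not proximal: as the restriction of a circulant to~$\R^{d-1}$ its eigenvalues are~$1-a+a\omega_k$ with~$\omega_k=e^{2\pi ik/d}$, $k=1,\dots,d-1$, occurring in complex-conjugate pairs of equal modulus. I would therefore work with an asymmetric pattern (for instance, one coordinate equal to~$b$ and the rest to~$a$) and verify by a direct spectral computation that this splits the dominant conjugate pair, leaving a unique eigenvalue of largest modulus. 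Because proximality fails only on a proper algebraic subvariety of the parameter space, producing one such explicit witness in~$\supp\xi^d$ closes the contraction step. Combining the three ingredients, Proposition~\ref{th343} gives~\eqref{cond}, and Proposition~\ref{propcond} delivers the theorem.

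The main obstacle I anticipate is the strong-irreducibility step. In the~$d=3$ case of Theorem~\ref{mainthm1}, proper subspaces are lines and a single shear trivially produces infinitely many slopes, whereas for~$d-1\ge 3$ one must exclude invariant finite unions of subspaces of \emph{every} intermediate dimension, requiring a more intricate analysis of how the sparse bidiagonal-plus-bottom-row structure of~$T$ interacts with such families. The contraction step is less delicate but genuinely uses the availability of two distinct values~$a\ne b$ in~$\supp\xi$ to escape the purely circulant spectrum, and the clean separation between the odd and even cases enters only through the invertibility step, via Assumption~\ref{Asu2}.
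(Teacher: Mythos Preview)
Your high-level reduction (invertibility, strong irreducibility, contraction, then Propositions~\ref{th343} and~\ref{propcond}) matches the paper, and your invertibility argument is correct. But both of the substantive steps contain genuine gaps.

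\textbf{Contraction.} Your plan is to locate a proximal element of~$\tilde\SSS_\mu$, specifically~$T(b,a,\dots,a)$, by checking that the perturbation ``splits the dominant conjugate pair'' of~$T(a,\dots,a)$. This cannot work as stated: all the matrices~$T(\xi_1,\dots,\xi_d)$ are real, so their non-real eigenvalues \emph{always} come in complex-conjugate pairs of equal modulus, regardless of the perturbation. A proximal real matrix must have a \emph{real} dominant eigenvalue, and there is no reason the single-entry perturbation produces one (for odd~$d$, $T_a$ has no real eigenvalues at all, and this persists for nearby~$b$). The ``proper algebraic subvariety'' heuristic also does not help, since~$\supp\xi$ may be just~$\{a,b\}$. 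The paper circumvents this entirely: via Gol'dsheid--Margulis (Propositions~\ref{propGM1}--\ref{propGM2}) it passes to the Zariski closure of~$\mathcal S_{a,b}$, which is a \emph{group} and therefore contains inverses. It then writes down explicit products of the form~$T(\cdot)T(\cdot)^{-1}\cdots$ that are lower-triangular with controllable bottom row, so that normalized powers converge to a rank-one matrix by inspection---no spectral analysis needed.

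\textbf{Strong irreducibility.} You correctly flag this as the main obstacle, but the sketch does not close it. A unipotent shear in~$\R^{d-1}$ fixes a hyperplane pointwise, and a finite union of subspaces inside that hyperplane is invariant under all its powers; adding a second shear with a different fixed hyperplane still leaves plenty of room for invariant finite unions of lower-dimensional subspaces, and you give no mechanism for ruling those out. The paper proceeds quite differently. It first proves ordinary irreducibility (Proposition~\ref{p.irred}) by passing to~$\C^{d-1}$, observing that the eigenvectors of~$T_a$ are the Vandermonde vectors in the~$d$-th roots of unity, and showing that the family~$\{T_{a,b;k}\}_k$ (one coordinate flipped to~$b$) cannot preserve any span of a proper subset of them. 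Then, for \emph{strong} irreducibility (Proposition~\ref{p.st.irred}), it invokes a structural lemma of Gol'dsheid--Guivarc'h: if~$\mathcal S_{a,b}$ were irreducible but not strongly irreducible, $\R^{d-1}$ would split as a direct sum of equidimensional pieces permuted by every element. A trace argument forces one piece to be~$T_a$-invariant, hence a span of eigenvectors, and a careful rank/determinant analysis of how the~$T_{a,b;k}$ move that piece produces too many linearly independent images to fit in~$\R^{d-1}$. This case analysis, not a shear argument, is what carries the proof.
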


From now on we will suppose that~Assumption~\ref{Asu2} is in fact fulfilled. As a result, we can always choose~$a,b\in \supp(\xi)$ such that~$a\neq b, a,b \in (0,1)$ and~$T(a_1,a_2,....,a_d)$ is invertible for all sequences~$a_1,a_2,...,a_d$ where each~$a_i\in \{a,b\}$. Let~$\mathcal{S}_{a,b}$ stand for the smallest closed semigroup which contain all of the following matrices
$$
|\det T(a_1,a_2,....,a_d)|^{-1/d}T(a_1,a_2,....,a_d),
$$ 
with~$a_1,a_2,...,a_d\in \{a,b\}$. 
We will show that~$\mathcal{S}_{a,b}\subseteq \SSS_{\mu}$ is strongly irreducible and contracting, hence so is~$\SSS_\mu$ itself. Then the result of Theorem~\ref{mainthm2} will immediately follow from Proposition~\ref{propcond} and~\ref{th343}, provided we check the condition of the latter statement (and this is done in turn in Propositions~\ref{p.st.irred} and~\ref{pr.contra} below).

\subsection{Irreducibility}
\begin{proposition}\label{p.irred}
Suppose that Assumptions~\ref{Asu1} and~\ref{Asu2} hold. Then the family of matrices 
$$
\{T(a_1,a_2,...,a_d)\}_{a_1,a_2,...,a_d\in \{a,b\}}
$$ 
is irreducible in~$\R^{d-1}$. 
\end{proposition}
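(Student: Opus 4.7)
Plan of proof. Suppose, for contradiction, that $L$ is a linear subspace of $\R^{d-1}$ with $\{0\}\ne L\ne\R^{d-1}$, and that $T(a_1,\dots,a_d)L=L$ for every $(a_1,\dots,a_d)\in\{a,b\}^d$. The whole argument is driven by the affine dependence of $T$ on the $\xi_i$'s: directly from~\eqref{matrix2} one reads
\[
T(\xi_1,\dots,\xi_d)=I_{d-1}+\sum_{i=1}^{d}\xi_i A_i,
\]
where $A_1=-E_{11}$, $A_i=E_{i-1,i}-E_{ii}$ for $2\le i\le d-1$, and $A_d=-\sum_{j=1}^{d-1}E_{d-1,j}$; here $E_{k\ell}$ denotes the matrix unit with a single $1$ at position $(k,\ell)$. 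The target is to show that the $\R$-algebra $\mathcal{A}$ generated by $\{T(a_1,\dots,a_d):a_i\in\{a,b\}\}$ is the full matrix algebra $M_{d-1}(\R)$, which immediately forces $L=\{0\}$ or $L=\R^{d-1}$ and gives the contradiction.

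First step: fixing all coordinates except the $i$-th and toggling it between $a$ and $b$ gives
\[
T(\dots,b,\dots)-T(\dots,a,\dots)=(b-a)A_i.
\]
Since $L$ is preserved by both summands and $b-a\ne 0$, it is preserved by $A_i$; doing this for each $i=1,\dots,d$ and using that the set of operators preserving $L$ is closed under sums and products, the whole algebra $\mathcal{A}$ preserves $L$.

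Second step: climb inductively to every matrix unit. Start from $E_{11}=-A_1\in\mathcal{A}$; assuming $E_{ii}\in\mathcal{A}$ for some $1\le i\le d-2$, one computes
\[
E_{ii}A_{i+1}=E_{i,i+1}\in\mathcal{A},\qquad E_{i+1,i+1}=E_{i,i+1}-A_{i+1}\in\mathcal{A}.
\]
Running the induction shows $E_{ii}\in\mathcal{A}$ for every $1\le i\le d-1$ and $E_{i,i+1}\in\mathcal{A}$ for every $1\le i\le d-2$; composition of superdiagonals then produces every upper-triangular unit $E_{ij}$. The irregular last row of $T$ (encoded by $A_d$) is handled by $A_d E_{kk}=-E_{d-1,k}\in\mathcal{A}$ for each $k$, after which an arbitrary $E_{ik}$ is recovered from $E_{i,d-1}E_{d-1,k}$. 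This shows $\mathcal{A}=M_{d-1}(\R)$, completing the argument.

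The main obstacle I see is identifying the correct affine decomposition $T=I+\sum\xi_i A_i$ in which each $A_i$ has support confined to a single row, or two adjacent rows; once that is in place, the inductive ladder above is essentially bookkeeping. I do not expect Assumption~\ref{Asu2} to enter directly: Assumption~\ref{Asu1} alone supplies two distinct $a,b\in\supp(\xi)\cap(0,1)$, which is all the irreducibility argument actually uses.
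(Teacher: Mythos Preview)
Your argument is correct and takes a genuinely different route from the paper.

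The paper works over~$\C^{d-1}$: it diagonalises the single matrix $T_a=T(a,\dots,a)$ using the eigenvectors $v_l=(1,\eps^l,\eps^{2l},\dots)^{\T}$ with $\eps=e^{2\pi i/d}$, shows that every $T_a$-invariant subspace is the span of some subset of the $v_l$, and then checks that none of these spans survives the perturbations $T_{a,b;k}$ (where a single coordinate is switched from $a$ to $b$), because the differences $(T_{a,b;k}-T_a)v_r$ sweep out a spanning family.

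Your approach is more structural and stays entirely over~$\R$: you exploit the affine dependence $T=I+\sum_i \xi_i A_i$, read off each $A_i$ as a difference of two admissible matrices, and then run a short matrix-unit ladder to prove that the $A_i$ generate the full matrix algebra $M_{d-1}(\R)$. This is cleaner: no complexification, no eigenvalue computation, no classification of invariant subspaces of a fixed operator. It also makes explicit that only Assumption~\ref{Asu1} is really used here (two distinct values $a\ne b$ in $\supp\xi\cap(0,1)$); Assumption~\ref{Asu2} enters the paper's argument only implicitly, through the choice of $a,b$ ensuring $T_a$ is invertible so that its eigenvalues $1-a+a\eps^l$ are all nonzero. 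Your ladder needs no invertibility at all.

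One small wording point: when you say ``$L$ is preserved by both summands and $b-a\ne 0$, it is preserved by $A_i$'', what you actually get from $T_1L=L$ and $T_2L=L$ is only $(T_1-T_2)L\subseteq L$, i.e.\ $A_iL\subseteq L$. That inclusion (not equality) is exactly what is needed for the algebra $\{M:ML\subseteq L\}$ to contain each $A_i$, and hence all of $M_{d-1}(\R)$, forcing $L$ trivial. It may be worth saying this explicitly.
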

\begin{proof}
Observe that, if~$W$ is a real proper invariant subspaces of linear operator~$A$ then~$\tilde{W}=\{w'+iw''\ : \ w',w''\in W\}$ is also a complex proper invariant subspaces of~$A$. Thus we can complete the proof by proving the irreducibility in~$\mathbb{C}^{d-1}$. 

From now on, let us denote
\begin{equation}\label{matrix3}
T_a=T(a,a,...,a)\text{ and }  T_{a,b;k}=T(a_1,a_2,...,a_d)|_{a_k=b, \ a_j=a, j\neq k}.
\end{equation}
Note that~$T_{a}$ has eigenvectors given by
\begin{align}\label{eigenvector}
v_1=\begin{pmatrix}
1\\ \eps\\ \eps^2\\ \vdots\\ \eps^{d-2}
\end{pmatrix},\ 
v_2=\begin{pmatrix}
1\\ \eps^2\\ \eps^4\\ \vdots\\ \eps^{2(d-2)}
\end{pmatrix},\ \dots,
v_{d-1}=\begin{pmatrix}
1\\ \eps^{d-1}\\ \eps^{(d-1)2}\\ \vdots\\ \eps^{(d-1)(d-2)}
\end{pmatrix}
\end{align}
where~$\eps=e^{2\pi i/d}$ is the~$d-$th root of~$1$; one can easily conclude that these~$d-1$ eigenvectors are linearly independent in~$\mathbb{C}^{d-1}$, and correspond to eigenvalues~$\lambda_l=1-a+a\eps^l$, $l=1,2,\dots,d-1$ respectively.

Let us prove that all complex proper invariant subspaces of~$T_{a}$ are given by the linear spans of~$2^n-2$ non-trivial subsets of~$\{v_1,\dots,v_{d-1}\}$, and only by them. First of all, suppose~$V={\rm span}(v_{k_1},v_{k_2},\dots,v_{k_m})$ where~$1\le k_1<k_2<\dots<k_m\le d-1$ and~$m\in\{1,2,\dots,d-1\}$. Since~$T_{a} v_{k_l}=\lambda_{k_l} v_{k_l}$ and~$\lambda_{k_l}\ne 0$, $1\le l\le m$, we conclude that~$\text{span}(T_{a} v_{k_1},\dots,T_{a} v_{k_m})=V$ and hence~$T_{a} (V)=V$ and thus~$V$ is indeed invariant. 

On the other hand, suppose~$V$ is an invariant subspace of~$T_{a}$, that is~$T_{a} (V)=V$.  Since~$v_1,\dots,v_{d-1}$ form a basis, any vector~$w\in V$ can be written as
$$
w=q_1 v_{k_1}+q_2 v_{k_2}+\dots+q_m v_{k_m}
$$
where all~$q_l\ne 0$. Since~$V$ is an invariant subspace, $T_{a} w\in V$, consequently
$$
w'=q_2'v_{k_2}+\dots+q_m'v_{k_m}=
q_2 (\lambda_{k_2}-\lambda_{k_1})v_{k_2}+\dots
q_m (\lambda_{k_m}-\lambda_{k_1})v_{k_m}=T_{a;a} w - \lambda_{k_1} w \in V
$$
with all~$q_l'\ne 0$ since all~$\lambda$'s are distinct.  Continuing this by induction, we will obtain that~$v_{k_m}\in V$, and hence~$v_{k_{m-1}}\in V,\dots, v_{k_1}\in V$. Therefore, $V$ contains all those~$v_k$ for which the projection of some vector~$w\in V$ on~$v_k$ has a non-zero coefficient. At the same time the span of all these~$v_k$ will contain all those vectors~$w$, hence~$V$ is the span of a subset of~$\{v_1,\dots,v_{d-1}\}$.

Next we will show that at the same time no proper invariant subspace~$V={\rm span}(v_{k_1},v_{k_2},\dots,v_{k_m})$  of~$T_{a}$ can be also an invariant subspace of~$T_{a,b;k}$, \ $k=1,2,...,d$. First, define the sequence of vectors~$u_1,\dots,u_d\in\R^{d-1}$ by
\begin{align}\label{eq_u_defined}
u_1
=\begin{pmatrix} 1\\ 0\\ 0\\ \vdots\\0\\ 0\end{pmatrix},\
u_2
=\begin{pmatrix} -1\\ 1\\ 0\\ \vdots\\0\\ 0\end{pmatrix},\
u_3
=\begin{pmatrix} 0\\ -1\\ 1\\ \vdots\\0\\ 0\end{pmatrix},\
\dots, \
u_d
=\begin{pmatrix} 0\\ 0\\ 0\\ \vdots\\-1\\ 1\end{pmatrix}.
\end{align} 
We must have~$T_{a,b;1} v_r \in V$ for all~$r\in\{k_1,k_2,\dots,k_m\}$, hence
$$
(a-b) u_1=T_{a,b;1} v_r-\lambda_r v_r\in V
$$
Now, by using the fact that
$$
(T_{a,b;k} -T_{a}) v_r=(a-b)\epsilon^{r(k-1)} u_k \in V
$$
for~$k=1,2,\dots,d$ we obtain that~$u_1,u_2,\dots,u_d\in V$.
Note that~$u_2,u_3,\dots,u_d$ are linearly independent, hence~$V={\rm span}(u_2,\dots,u_d)\equiv \R^{d-1}$. This contradiction completes the proof.
\end{proof}

\subsection{Strong irreducibility}We already know from Proposition~\ref{p.irred} that~$\mathcal{S}_{a,b}$ is irreducible. Now we aim to show its {\it strong} irreducibility.

\begin{lemma}\label{lemdirectsum}
If~$\mathcal{S}_{a,b}$  is irreducible but not strongly irreducible in~$\R^{d-1}$, then there exist proper linear subspaces~$V_1,V_2,...,V_r$ of~$\R^{d-1}$ such that
\begin{align*}
\R^{d-1}=\bigoplus_{j=1}^r V_j\text{ where }r>1, V_i\cap V_j=\{0\} \text{ if } i\ne j,
\end{align*}
where all the subspaces~$V_j$ have the same dimension, and 
$$
M(\cup_{j=1}^r V_j)=\cup_{j=1}^r V_j,
$$ 
for all~$M\in\mathcal{S}_{a,b}$.
\end{lemma}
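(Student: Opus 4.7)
The plan is to start from some $\mathcal{S}_{a,b}$-invariant finite union of proper subspaces---whose existence is guaranteed by the failure of strong irreducibility---and, by a careful minimality argument, refine it until its components satisfy all the claimed properties. Specifically, among all finite unions of proper linear subspaces of $\R^{d-1}$ that are invariant under every $M\in\mathcal{S}_{a,b}$, choose $L=V_1\cup\cdots\cup V_r$ so as to minimize first the maximal dimension $d^*:=\max_i\dim V_i$ of a component, and then, subject to that, the number $r$ of components. Since $\mathcal{S}_{a,b}$ is irreducible by Proposition~\ref{p.irred}, a single proper invariant subspace is impossible, so $r\ge 2$.

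Next, the aim is to show that each $M\in\mathcal{S}_{a,b}$ permutes the $V_j$'s and that they all share dimension $d^*$. For any $M$, the image $MV_i$ is a linear subspace contained in the finite union $L$, and since a subspace of a vector space over an infinite field that is contained in finitely many subspaces must lie in one of them, $MV_i\subseteq V_{\sigma_M(i)}$ for some $\sigma_M(i)\in\{1,\dots,r\}$. Invertibility of $M$ (ensured by our choice of $\mathcal{S}_{a,b}$ under Assumption~\ref{Asu2}) yields $\dim V_i=\dim MV_i\le\dim V_{\sigma_M(i)}$. Letting $I^*:=\{i:\dim V_i=d^*\}$, the map $\sigma_M$ sends $I^*$ into itself with equality $MV_i=V_{\sigma_M(i)}$ for $i\in I^*$, and injectivity of $M$ makes $\sigma_M|_{I^*}$ a bijection on the finite set $I^*$. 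Consequently $\bigcup_{i\in I^*}V_i$ is itself $\mathcal{S}_{a,b}$-invariant (with equality) and has maximal component dimension $d^*$; by the minimality of $r$, one must have $I^*=\{1,\dots,r\}$, so every $M$ acts as a permutation on $\{V_1,\dots,V_r\}$ and all $V_j$ share dimension $d^*$.

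For pairwise trivial intersections, invertibility of $M$ gives $M(V_i\cap V_j)=MV_i\cap MV_j=V_{\sigma_M(i)}\cap V_{\sigma_M(j)}$, so the semigroup orbit of $V_i\cap V_j$ consists of subspaces $V_{i'}\cap V_{j'}$ with $i'\ne j'$, all of the same dimension $\dim(V_i\cap V_j)$. If $V_i\cap V_j\ne\{0\}$ for some $i\ne j$, then since distinct subspaces of the same dimension are incomparable under inclusion one has $0<\dim(V_i\cap V_j)<d^*$, and the union of the orbit is a finite $\mathcal{S}_{a,b}$-invariant union of proper subspaces of maximal dimension strictly less than $d^*$, contradicting the minimality of $d^*$. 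Hence $V_i\cap V_j=\{0\}$ whenever $i\ne j$.

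Finally, $V_1+\cdots+V_r$ is $\mathcal{S}_{a,b}$-invariant (the permutation property of $\sigma_M$ preserves the sum), so by the irreducibility proved in Proposition~\ref{p.irred} it is either $\{0\}$ or $\R^{d-1}$, and since $V_1\ne\{0\}$ it equals $\R^{d-1}$; combined with pairwise triviality of intersections, this gives the decomposition $\R^{d-1}=\bigoplus_{j=1}^r V_j$ in the sense of the statement, and $M(\cup_j V_j)=\cup_j V_j$ is immediate from $\sigma_M$ being a permutation. The main subtlety in this plan is the order of minimization: by first minimizing the maximum dimension $d^*$ of a component (and only then the number $r$ of components) one ensures that the orbit of a potentially nontrivial intersection $V_i\cap V_j$ yields a strictly lower-dimensional invariant union, which is what makes each contradiction go through; reversing the order of these two minimizations would not suffice.
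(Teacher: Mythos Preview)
Your minimality argument is sound up to and including the pairwise-intersection step: you correctly show that each invertible $M$ permutes the $V_j$, that all $V_j$ share a common dimension $d^*$, that $V_i\cap V_j=\{0\}$ for $i\ne j$, and that $\sum_j V_j=\R^{d-1}$. The gap is the final inference. For $r\ge 3$, pairwise triviality of intersections together with $\sum_j V_j=\R^{d-1}$ does \emph{not} give a direct sum, and your argument uses nothing specific to $\mathcal{S}_{a,b}$ that would force it. Indeed, the group generated by the rotation through $\pi/3$ in $GL(2,\R)$ is irreducible (no real eigenline) but not strongly irreducible (it cyclically permutes the lines at angles $0,\pi/3,2\pi/3$); your procedure applied to it returns exactly those three lines, pairwise meeting only at the origin and spanning $\R^2$, yet obviously not in direct sum---and one checks that this group admits no invariant \emph{pair} of lines, so no variant of the minimality scheme can manufacture a direct-sum decomposition here. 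Your hedge ``in the sense of the statement'' does not rescue this: the symbol $\bigoplus$ denotes a genuine direct sum, and the consequence $r\cdot d^*=d-1$ is precisely what Proposition~\ref{p.st.irred} relies on (in the $m=1$ case it forces $r=d-1$, which is what makes the pigeonhole contradiction there work).

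The paper gives no self-contained proof and simply defers to equation~(2.7) and the surrounding remark in Goldsheid--Guivarc'h~\cite{GG}. The mechanism there is of a different nature: one passes to the Zariski closure $G={\sf Zr}(\mathcal{S}_{a,b})$, which is an algebraic group by Proposition~\ref{propGM1}, and exploits its identity component $G^0$; since a connected group cannot permute a finite set of subspaces nontrivially, failure of strong irreducibility forces $G^0$ to act reducibly, and the $V_j$ are obtained as $G^0$-isotypic pieces---in direct sum by construction and permuted by $G$ because $G^0$ is normal. That algebraic-group input is exactly the missing idea your elementary argument does not supply.
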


\begin{proof}[Proof of Lemma~\ref{lemdirectsum}.]
See the remark and the equation~(2.7) on pp.~121--122 of~\cite{GG}.
\end{proof}

\vskip 5mm

\begin{proposition}\label{p.st.irred}
Suppose that Assumptions~\ref{Asu1} and~\ref{Asu2} hold. Then the semigroup~$\mathcal{S}_{a,b}$ is strongly irreducible. 
\end{proposition}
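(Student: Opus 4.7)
The strategy is proof by contradiction via Lemma~\ref{lemdirectsum}. Suppose $\mathcal{S}_{a,b}$ is not strongly irreducible. Since it is irreducible by Proposition~\ref{p.irred}, Lemma~\ref{lemdirectsum} supplies a decomposition $\R^{d-1}=V_1\oplus\cdots\oplus V_r$ with $r\ge 2$ into proper subspaces of a common dimension $m=(d-1)/r$, permuted by every $M\in\mathcal{S}_{a,b}$. The plan is to show that the detailed spectral picture of $T_a$ together with the rank-one perturbations $T_{a,b;k}$ is incompatible with such a decomposition.

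First I would pass to $\C^{d-1}$ and use the eigenbasis $v_1,\dots,v_{d-1}$ from~\eqref{eigenvector} with eigenvalues $\lambda_l=1-a+a\eps^l$ of $T_a$. Since $\mathcal{S}_{a,b}$ acts on $\{V_1,\dots,V_r\}$ by permutations, one can choose an integer $N$ (for instance a common multiple of $r!$ and the order of the subgroup of the symmetric group generated by the permutations induced by $T_a$ and all $T_{a,b;k}$) so that $T_a^N$ and each $T_{a,b;k}^N$ stabilise every $V_j$. Each complexified $V_j$ is then $T_a^N$-invariant; since $T_a^N$ is diagonalisable, $V_j\otimes\C$ splits as a direct sum of subspaces of the $T_a^N$-eigenspaces, i.e.\ of the spans of those $v_l$ sharing a common value of $\lambda_l^N$.

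Next I would mimic the argument of Proposition~\ref{p.irred} using the fundamental identity
\[
(T_{a,b;k}-T_a)\,v_l=(a-b)\,\eps^{l(k-1)}\,u_k,\qquad k=1,\dots,d,
\]
with $u_1,\dots,u_d$ defined in~\eqref{eq_u_defined} and $u_2,\dots,u_d$ forming a basis of $\R^{d-1}$. Let $\sigma_0,\sigma_k$ be the permutations of $\{1,\dots,r\}$ induced by $T_a, T_{a,b;k}$. For $v\in V_j$ one has $T_a v\in V_{\sigma_0(j)}$ and $T_{a,b;k}v\in V_{\sigma_k(j)}$, so as soon as $V_j$ meets a single eigenline $\spn(v_l)$ nontrivially, the vector $u_k$ lies in $V_{\sigma_k(j)}+V_{\sigma_0(j)}$. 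Letting $k$ vary through $\{1,\dots,d\}$ and $j$ through the indices whose $V_j$ supports the various $v_l$, the vectors $u_k$ are confined to unions of a small number of $V_j$'s; combined with the fact that the $u_k$ span $\R^{d-1}$ and that $r\ge 2$, this should force some $V_j$ to equal all of $\R^{d-1}$, contradicting properness.

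The main obstacle is the degenerate case in which several $\lambda_l^N$ coincide, so that $V_j\otimes\C$ is not forced to be a span of a subset of $\{v_l\}$ but may be an arbitrary subspace inside a higher-dimensional $T_a^N$-eigenspace. To handle this one should exploit the fact that the perturbations $T_{a,b;k}$ act with coefficients $\eps^{l(k-1)}$ that still distinguish individual values of $l$, so that compatibility of the permutation action with the rank-one jumps $u_k$ pins down each $V_j$ to a coordinate-type direct sum of the eigenlines; alternatively, one can vary the choice of $a,b\in\supp(\xi)$ permitted by Assumption~\ref{Asu1} to avoid the algebraic coincidences $\lambda_{l_1}^N=\lambda_{l_2}^N$ altogether. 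Making this case analysis precise, and correctly combining the constraints from all $d$ rank-one perturbations together with the requirement that $\dim V_j=m$ is the same for every $j$, is the delicate combinatorial core of the argument.
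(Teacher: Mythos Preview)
Your overall architecture (contradiction via Lemma~\ref{lemdirectsum}, complexify, exploit the $T_a$-eigenbasis and the rank-one identity $(T_{a,b;k}-T_a)v_l=(a-b)\eps^{l(k-1)}u_k$) matches the paper's, but you are missing the one idea that makes the argument close. Passing to a power $T_a^N$ in order to stabilise every $V_j$ is precisely what creates the degeneracy $\lambda_{l_1}^N=\lambda_{l_2}^N$ that you flag as the ``main obstacle''; your two suggested fixes do not work. The coefficients $\eps^{l(k-1)}$ distinguish eigenlines only once you already know $V_j$ is a span of eigenvectors, and ``varying $a,b$'' is circular: the decomposition $V_1\oplus\cdots\oplus V_r$, and hence the required $N$, depends on the pair $(a,b)$, while Assumption~\ref{Asu1} gives you only two points in $\supp\xi$, both of which could satisfy the finitely many bad algebraic relations.

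The paper avoids powers altogether by a trace argument. Since $T_a$ permutes the $V_j$ (Lemma~III.4.5.b in~\cite{BL}), if no $V_j$ were $T_a$-invariant then in a basis adapted to $\bigoplus V_j$ the matrix $T_a$ would be block off-diagonal, hence traceless; but ${\rm tr}(T_a)=(d-1)(1-a)-a\neq0$ unless $a=(d-1)/d$, and in that exceptional case one simply swaps $a$ and $b$. Thus $T_a$ itself fixes some $V_1$, so $V_1$ is a span of a subset of the $v_l$ (distinct eigenvalues, no power needed). From there the paper does the concrete work you gesture at in your last paragraph: it shows via a Vandermonde determinant that $w_1,\dots,w_m,u_1,\dots,u_{d-1-m}$ are independent, deduces that the images $T_{a,b;k}V_1$ are pairwise distinct, and counts dimensions to force $m=1$; the case $m=1$ is then finished by a further rank computation and a separate $d=4$ check. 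Your sketch of step~3 (``$u_k$ confined to unions of a small number of $V_j$'s \dots\ should force some $V_j$ to equal all of $\R^{d-1}$'') is not yet an argument; these explicit linear-independence and dimension counts are where the proof actually lives.
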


\begin{proof}

For a real linear space~$W\subset\R^{d-1}$, we define 
$$
\tilde W=\{w'+iw'',w',w''\in W\}\subset\C^{d-1},
$$ 
which is also a complex linear subspace of~$\C^{d-1}$. 

We already know that the semigroup~$\mathcal{S}_{a,b}$ is irreducible in~$\R^{d-1}$.
Suppose~$\mathcal{S}_{a,b}$ is not strongly irreducible in~$\R^{d-1}$. Then it implies from Lemma~\ref{lemdirectsum}  that there exist proper linear space~$V_1,V_2,...,V_r\subset \R^{d-1}$ such that
$$
\C^{d-1}=\bigoplus_{j=1}^r \tilde V_j,
$$ 
where~$\tilde V_j$ are disjoint linear subspaces of the same dimension, say~$m$, and
$$M(\cup_{j=1}^r \tilde V_j)=\cup_{j=1}^r \tilde V_j,$$ for all~$M\in\mathcal{S}_{a,b}$. 

The rest of the proof is organized as follows. First, we show irreducibility in the case~$m>1$. The case when~$m=1$ is split further in the sub-cases including the one where~$k=2$ and~$k\ge 3$, and yet further sub-sub-case where~$k=4$.

Observe also that from Lemma~III.4.5.b in~\cite{BL} it follows that for each~$j\in\{1,2,\dots,d-1\}$, we have~$T_a \tilde V_j =\tilde V_k$ for some~$k=k(j)$. Suppose~$k(j)\ne j$ for all~$j$. Let~$e_1,\dots,e_{d-1}$ be the basis~$\C^{d-1}$ such that~$e_1,\dots,e_m$ is the basis of~$V_1$, $e_{m+1},\dots,e_{m+m}$ is the basis of~$V_2$, etc. In this basis~$T_a$ will be a traceless matrix since all the~$V_j$ are disjoint. The property of being traceless is invariant with respect to changing the basis as~${\rm tr}(PAP^{-1})={\rm tr}(A)$. However, in the original basis~${\rm tr}(T_a)=(1-a)(d-1)-a\ne 0$ unless~$a=\frac{d-1}{d}$, but in this case we can replace~$a$ by~$b\ne a$, so we get a contradiction.

Thus we have established that~$k(j)=j$ for some~$j$; w.l.o.g.\ let us assume that~$j=1$ and consequently~$T_a V_1=V_1$. From the arguments in Proposition~\ref{p.irred} we know that~$V_1$ is a linear span of some subset of~$v_k$'s from~\eqref{eigenvector}, that is~$V_1=\spn\{w_1,\dots,w_m\}$ where 
$w_j=v_{r_j}$, for some subset~$\{r_1,\dots,r_m\}\subset \{1,2,\dots,d-1\}$. By denoting~$\eps_j:=\eps^{r_j}$, some~$d$-th root of~$1$, we get that
$w_j=\begin{pmatrix}1, \eps_j,\dots,\eps_j^{d-2}\end{pmatrix}^{\T}$. 
Let~$u_k$ be defined as in~\eqref{eq_u_defined}.
Then  
$$
T_{a,b;k} w_j= \lambda_{r_j} w_j +(a-b)\eps_j^{k-1} u_k.
$$

For every~$k$, we must have~$T_{a,b;k} V_1=V_j$ for some~$j=j(k)$.
Now suppose that there is no~$k$ such that~$T_{a,b;k} V_1=V_1$. Recall that~$V_1=\spn(w_1,\dots,w_m)$. Let
$$
V_k'=T_{a,b;k} V_1=\spn(\{\lambda_{r_j} w_j+c_k u_k,\ j=1,\dots,m\})
$$
where~$c_k=(a-b)\eps_j^{k-1}\ne 0$ for~$k=1,2,\dots,d-1-m$.
Observe that at the same time~$V_k'=V_q$ for some~$q=q(k)$, so that the collection~$V_k'$, $k=1,\dots,d-1-m$, is some subset of~$V_1,\dots,V_r$, possibly with repetitions.

Let us show that~$w_1,\dots,w_m,u_1,...,u_{d-1-m}$ are linearly independent. Indeed, to establish the rank of the matrix of~$d-1$ vectors~$w_1,\dots,w_m,u_1,u_2,\dots,u_{d-1-m}$ observe that

\begin{align*}
&\det\begin{pmatrix}
1 & 1 & \dots & 1  & 1 & -1 & 0 &\dots & 0\\
\eps_1 & \eps_2 & \dots & \eps_{m}   & 0 & 1 & -1 &\dots & 0\\
\eps_1^2 & \eps_2^2 & \dots & \eps_{m}^2   & 0 & 0 & 1 &\dots & 0\\
\vdots & \vdots & \ddots & \vdots   & \vdots & \vdots & \vdots &\ddots & \vdots \\
\eps_1^{d-m-2} & \eps_2^{d-m-2} & \dots & \eps_{m}^{d-m-2} 
& 0 & 0 & 0 &\dots & 1\\
\vdots & \vdots & \ddots & \vdots   & \vdots & \vdots & \vdots &\ddots & \vdots \\
\eps_1^{d-2} & \eps_2^{d-2} & \dots & \eps_{m}^{d-2} 
& 0 & 0 & 0 &\dots & 0
\end{pmatrix} & \\ &
=\det
\begin{pmatrix}
\eps_1^{d-m-1} & \eps_2^{d-m-1} & \dots & \eps_{m}^{d-m-1} 
\\
\vdots & \vdots & \ddots &  \vdots \\
\eps_1^{d-2} & \eps_2^{d-2} & \dots & \eps_{m}^{d-2} 
\end{pmatrix}
\\
&=\eps_1^{d-m-1}\cdot\dots\cdot \eps_{m}^{d-m-1} 
\cdot \det
\begin{pmatrix}
1 & 1 & \dots & 1
\\
\vdots & \vdots & \ddots &  \vdots \\
\eps_1^{m-1} & \eps_2^{m-1} & \dots & \eps_{m}^{m-1} 
\end{pmatrix} & \\ & 
=\prod_{j=1}^m \eps_j^{d-m-1}
\cdot \prod_{1\le j<k\le m} (\eps_j-\eps_k)\ne 0
\end{align*}
since this is a Vandermonde matrix. This, in turn, implies that the subspaces~$V_1,V_1',V_2',\dots,V_{d-m-1}'$ are all pairwise distinct; otherwise there would be a vector which at the same time belongs to~$\spn(\{\lambda_{r_j} w_j+c_k u_k,\ j=1,\dots,m\})$ and~$\spn(\{\lambda_{r_j} w_j+c_l u_l,\ j=1,\dots,m\})$ for~$k\ne l$, yielding linear dependence for the set~$w_1,\dots,w_m,u_k,u_l$ which is impossible.

On the other hand, it implies that the dimension of~$V_1\oplus V_1'\oplus\cdots \oplus V_{d-m}'$ is~$m\times (d-1-m)>d-1$ unless~$m=1$, yielding a contradiction that this is a subspace of~$\R^{d-1}$.

\vskip 5mm

Thus now we have to deal only with the case~$m=1$.
In this case, all the spaces~$V_1,V_2,\dots,V_{d-1}$ are one-dimensional, moreover, by letting~$\nu=\eps_1$
\begin{align*}
w_1&=(1,\nu,\dots,\nu^{d-2})^{\T}  ,\\
V_1&=\spn(w_1),\\
V_k'&:=T_{a,b;k} V_1=\spn(\lambda_{r_1}w_1+c_k u_k),\ k=1,2,\dots,d-1,
\end{align*}
and~$V_k'$s are some subset of~$V_2,\dots,V_{d-1}$ (if~$V_k'=V_1$ for some~$k$ then~$u_k\in\spn(w_1)$ which is impossible for~$d\ge 4$). If all the elements of the set~$V_1,V_1',\dots,V_{d-1}'$ are distinct (we know that then they must be linearly independent since~$\R^{d-1}=V_1\oplus V_2\oplus\cdots\oplus V_{d-1}$) this would yield a contradiction as our space is only~$(d-1)$-dimensional.

Observe that 
\begin{align*}
\det(w_1,u_2,u_3,\dots,u_{d-1})
 & =  \det\begin{pmatrix}
1      &-1 & 0 &\dots & 0\\
\nu   & 1 & -1 &\dots & 0\\
\nu^2 & 0 &  1 &\dots & 0\\
\vdots & \vdots & \vdots &\ddots & \vdots\\
\nu^{d-2} & 0 &  0 &\dots & -1 \\
\nu^{d-1} & 0 &  0 &\dots & 1
\end{pmatrix}\\
& = 1+\nu+\dots+\nu^{d-2}=\frac{1-\nu^{d-1}}{1-\nu}=\frac{-1}{\nu}\ne 0
\end{align*}
since~$\nu^d\equiv \eps_1^d=1$. This implies that the vectors~$w_1,u_2,u_3,\dots,u_{d-1}$ are linearly independent and hence it is impossible that~$V_k'=V_h'$ for some~$k,h\in\{2,\dots,d-1\}$ such that~$k\ne h$. 

\vskip 5mm

So the only not covered case is when~$V_1'$ coincides with some~$V_k'$, $k=2,\dots,d-1$, implying a linear dependence between~$w_1$, $u_1$ and~$u_k$. However, if~$k=2$, then
\begin{align*}
\rank(w_1,u_1,u_k)&=
\rank\begin{pmatrix}
1  & \nu & \nu^2 & \dots &\nu^{d-2}\\
1  & 0 & 0 & \dots&0\\
-1  & 1 & 0 & \dots&0
\end{pmatrix}
\\
&=1+
\rank\begin{pmatrix}
 \nu & \nu^2 & \dots &\nu^{d-2}\\
 1 & 0 & \dots & 0
\end{pmatrix}=3
\end{align*}
since~$\nu^2\ne 0$. Finally, if~$k\ge 3$, then
\begin{align*}
\rank(w_1,u_1,u_k)&=
\rank\begin{pmatrix}
1  & \nu &\dots& \nu^{k-2} & \nu^{k-1} & \nu^{k} & \dots&\nu^{d-2}\\
1  & 0&\dots& 0 & 0 & 0 &\dots&0\\
0  & 0&\dots& -1 & 1 & 0 & \dots&0
\end{pmatrix}\\
&=1+
\rank\begin{pmatrix}
\nu &\dots& \nu^{k-2} & \nu^{k-1} & \nu^{k} & \dots&\nu^{d-2}\\
0&\dots& -1 & 1 & 0 & \dots &0
\end{pmatrix}=3
\end{align*}
unless simultaneously~$d=4$, $k=3$ and~$\nu=\eps_1=-1$.

\vskip 5mm

Finally, to deal with the case~$d=4$ and~$\eps_1=-1$, observe that 
$$
T(\xi_1,\xi_2,\xi_3,\xi_4)=\begin{pmatrix}
1-\xi_1 &\xi_2 & 0\\
0& 1-\xi_2 &\xi_3 \\
-\xi_4 &-\xi_4  &1-\xi_3 -\xi_4 
\end{pmatrix},
\quad 
w_1=\begin{pmatrix}
1 \\ -1\\ 1
\end{pmatrix}=e_1-e_2+e_3
$$
where~$e_1,e_2,e_3$ are the standard basis vectors for~$\R^3$.
Let us consider
\begin{align*}
w_1^*:=T(a,a,b,a) \, w_1&=(1-b-a) w_1+(b-a)e_1,
\\
w_2^*:=T(a,b,b,a) \, w_1&=(1-b-a) w_1+(b-a)e_2,
\\
w_3^*:=T(a,b,a,a) \, w_1&=(1-b-a) w_1+(b-a)e_3.
\end{align*}
Then, in the standard Euclidean coordinates,
$$
A:=[w_1^*,w_2^*,w_3^*]=\begin{pmatrix}
1-2 a& b+a-1& 1-b-a\\
1-b-a&2 b-1&1-b-a\\
1-b-a&b+a-1&1-2 a
\end{pmatrix},\ \text{and }\det(A)=(b-a)^2(1-2a).
$$
From Assumptions~\ref{Asu1} and~\ref{Asu2} it follows that w.l.o.g.\ we can chooses~$a$ and~$b$ such that~$a\ne 1/2$, $a\ne b$, and~$a+b\ne 1$, implying that the above determinant is non-zero. Thus we obtain that the three subspaces span by~$w_1^*$, $w_2^*$, $w_3^*$ are linearly independent in~$\R^3$ again yielding a contradiction.
\end{proof}

\subsection{Contracting property}

Here we need to show that the semigroup~$\mathcal{S}_{a,b}$ is strongly irreducible and contracting. While in general it is not easy to verify the contraction property of a semigroup, thanks to the following important statement by Goldsheid and~Margulis in~\cite{GM}, it suffices to check this property for the Zariski closure of~$\mathcal{S}_{a,b}$ (which is easier).

\begin{definition}
Zariski closure of a subset~$H$ of an algebraic manifold is the smallest algebraic submanifold that contains~$H$.
\end{definition}
\begin{proposition}[Lemma 3.3 in~\cite{GM}]
\label{propGM1}
The Zariski closure~${\sf Zr}(H)$ of a closed semigroup of~$H\subset GL(d,\R)$ is a group.
\end{proposition}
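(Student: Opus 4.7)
The plan is to show the Zariski closure $\bar H := {\sf Zr}(H)$ satisfies the three defining properties of a subgroup of $GL(d,\R)$: it is closed under multiplication, contains the identity, and is closed under inversion. The whole argument rests on two standard facts about $GL(d,\R)$ as a (quasi-)affine algebraic variety: its Zariski topology is Noetherian, and for each fixed $g\in GL(d,\R)$ the left multiplication $L_g:x\mapsto gx$, the right multiplication $R_g:x\mapsto xg$, and the inversion $\iota:x\mapsto x^{-1}$ are biregular automorphisms, since their inverses ($L_{g^{-1}}$, $R_{g^{-1}}$, and $\iota$ itself) are again polynomial in the matrix entries once one inverts $\det$. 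In particular all three maps are Zariski homeomorphisms and send closed subsets to closed subsets.

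First I would check that $\bar H$ is a semigroup. For any $g\in H$, the inclusion $gH\subseteq H$ together with continuity of $L_g$ yields $L_g(\bar H)=\overline{gH}\subseteq\bar H$, so $H\cdot\bar H\subseteq\bar H$. Then for fixed $h\in\bar H$ the set $R_h^{-1}(\bar H)=\{x\in GL(d,\R):xh\in\bar H\}$ is Zariski closed and contains $H$, hence contains $\bar H$, giving $\bar H\cdot\bar H\subseteq\bar H$. Next comes the central step: fix any $g\in H$ and examine the descending chain of closed sets
\[
\bar H\supseteq L_g(\bar H)\supseteq L_g^2(\bar H)\supseteq\cdots.
\]
Noetherianity of the ambient Zariski topology forces stabilization, $L_g^{n}(\bar H)=L_g^{n+1}(\bar H)$ for some $n$. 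Applying the set-theoretic inverse $L_g^{-n}=L_{g^{-n}}$ on both sides gives $\bar H=L_g(\bar H)$, so $L_g$ maps $\bar H$ bijectively onto itself. Choosing $y=g\in\bar H$ and solving $gx=g$ with $x\in\bar H$ produces $e\in\bar H$; then choosing $y=e$ produces $g^{-1}\in\bar H$. Thus $H^{-1}\subseteq\bar H$, and since $\iota$ is a Zariski homeomorphism, $\iota(\bar H)=\overline{\iota(H)}=\overline{H^{-1}}\subseteq\bar H$, so $\bar H$ is closed under inversion as well.

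The main obstacle is the descending-chain argument. One has to use specifically that $L_g$ is \emph{biregular}, not merely regular, so that $L_g(\bar H)$ is genuinely Zariski closed (the image of a closed set under a general morphism need not be closed), and one has to know that the Zariski topology on $GL(d,\R)$ is Noetherian so the chain actually terminates. Once these two points are granted, the cancellation $L_g^{n-1}(\bar H)=L_g^n(\bar H)$ is automatic from the set-theoretic invertibility of $L_g$ in the ambient group, and the remainder of the proof is just formal manipulation with continuity of multiplication and inversion.
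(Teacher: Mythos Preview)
Your argument is correct. The paper does not supply its own proof of this proposition; it is quoted verbatim as Lemma~3.3 of Goldsheid--Margulis~\cite{GM} and used as a black box, so there is nothing in the paper to compare your proof against. What you have written is precisely the standard proof of that lemma: closure under multiplication via separate continuity, then the Noetherian descending-chain trick on the iterates $L_g^n(\bar H)$ to force $g\bar H=\bar H$ for $g\in H$, yielding $e\in\bar H$ and $g^{-1}\in\bar H$, and finally passing from $H^{-1}\subseteq\bar H$ to $(\bar H)^{-1}\subseteq\bar H$ via the Zariski continuity of inversion.
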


\begin{proposition}[Lemma 6.3 in~\cite{GM}]
\label{propGM2}
If a closed semigroup~$H\subset GL(d,\R)$ is strongly irreducible and its Zariski closure~${\sf Zr}(H)$ has the contraction property then~$H$ also has the contraction property. 
\end{proposition}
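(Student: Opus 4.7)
Since this statement is Lemma~6.3 of~\cite{GM}, in the paper one would simply invoke that reference. The strategy I would follow if reproving it is to recast the analytic contraction property as the algebraic property of containing a \emph{proximal} element, that is, an $A\in GL(d,\R)$ whose eigenvalue of largest modulus is real, simple and strictly dominates in modulus all other eigenvalues. For such an $A$, the normalised powers $A^n/\|A^n\|$ converge to the rank-one projector onto the top eigenline, so having a proximal element trivially implies the contraction property. The hard, converse, implication states that under strong irreducibility a closed contracting semigroup must in fact contain a proximal element; one extracts this from a contracting sequence $\|A_n\|^{-1}A_n\to P$ by using $P$ to locate an attracting direction, invoking strong irreducibility to prevent this direction from being trapped in an invariant finite union of subspaces, and finally producing a genuine proximal matrix via a composition/perturbation argument inside the semigroup.

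Granting the equivalence ``contraction $\Leftrightarrow$ existence of a proximal element'' for closed strongly irreducible semigroups, the remainder is formal. Strong irreducibility is inherited by ${\sf Zr}(H)$ from $H$ because the stabiliser of any finite union of linear subspaces is Zariski-closed, so ${\sf Zr}(H)$ is simultaneously strongly irreducible and (by hypothesis) contracting, hence must itself contain a proximal element. The subset of proximal matrices is Zariski-open in $GL(d,\R)$, being cut out by the non-vanishing of the discriminant of the characteristic polynomial together with a semi-algebraic separation condition on the eigenvalue moduli that one checks to be open in the (real) Zariski topology by working with complexifications. Its intersection with the irreducible variety ${\sf Zr}(H)$ is therefore a nonempty Zariski-open subset of ${\sf Zr}(H)$. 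Since by definition $H$ is Zariski-dense in ${\sf Zr}(H)$, this nonempty open set must meet $H$, producing a proximal element of $H$; a second application of the equivalence then delivers the contraction property for $H$.

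The main obstacle I anticipate is the converse implication in the equivalence, namely extracting a single proximal element from the mere analytic convergence of some sequence of normalised matrices to a rank-one limit. Strong irreducibility is precisely what rules out pathologies such as the entire approximating sequence living on a proper invariant union of subspaces whose span supports no genuine dominant real eigenvector; without it the implication is false. Everything downstream, including the Zariski-density argument, is comparatively routine once this analytic-to-algebraic bridge is in place, which is why in practice one cites~\cite{GM} rather than reproving it.
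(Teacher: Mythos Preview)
You correctly note that the paper does not prove this proposition but simply cites it from~\cite{GM}. Your reconstruction of the argument, however, contains a genuine gap at the Zariski-density step.

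The claim that the set of proximal matrices is Zariski-open in $GL(d,\R)$ is false. Already for $d=2$, a matrix $A$ is proximal if and only if the discriminant $({\rm tr}\,A)^2-4\det A$ is strictly positive and ${\rm tr}\,A\neq 0$. The trace condition is indeed Zariski-open, but the discriminant condition is only \emph{semi-algebraic} open: the complementary region where the discriminant is negative (matrices with a pair of non-real complex-conjugate eigenvalues, e.g.\ rotations) is a nonempty Euclidean-open subset of $GL(2,\R)$, so the proximal locus cannot be Zariski-open, since any nonempty Zariski-open set is Euclidean-dense. Your phrase ``one checks to be open in the (real) Zariski topology by working with complexifications'' does not rescue this, because over $\C$ the inequality $|\lambda_1|>|\lambda_2|$ is not an algebraic condition at all.

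Nor can you fall back on Euclidean density: Zariski density of $H$ in ${\sf Zr}(H)$ does not imply Euclidean density, as the example $SL(2,\Z)\subset SL(2,\R)$ shows. The actual argument in~\cite{GM} uses the structure of ${\sf Zr}(H)$ as a real algebraic group in a more substantial way than the routine open-dense intersection you sketch. Ironically, the step you flagged as the main obstacle---extracting a proximal element from a contracting sequence under strong irreducibility---is the standard part, while the step you called ``comparatively routine'' is where your sketch actually breaks down.
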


\begin{proposition}\label{pr.contra}
Suppose that Assumptions~\ref{Asu1} and~\ref{Asu2} hold. Then the semigroup~$\mathcal{S}_{a,b}$ is contracting.
\end{proposition}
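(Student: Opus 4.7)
The plan is to invoke Proposition~\ref{propGM2}, which, combined with the strong irreducibility of $\mathcal{S}_{a,b}$ already established in Proposition~\ref{p.st.irred}, reduces the problem to showing that the Zariski closure $G := {\sf Zr}(\mathcal{S}_{a,b})$ has the contraction property; by Proposition~\ref{propGM1}, $G$ is a group, which inherits strong irreducibility. The standard route to verify contraction for such a group is to exhibit a \emph{proximal} element $M \in G$, i.e.\ one whose spectrum has a simple real eigenvalue $\lambda_1$ with $|\lambda_1|$ strictly larger than the modulus of every other eigenvalue. Given such an $M$, the sequence $M^n/\|M^n\|$ converges to the rank-one projector onto the $\lambda_1$-eigenline, which is exactly the contraction property.

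The natural candidate $T_a$ is \emph{not} itself proximal: its eigenvalues $\lambda_l = 1 - a + a\epsilon^l$, $l = 1,\dots,d-1$, come in complex-conjugate pairs $\lambda_l,\lambda_{d-l}$ of equal modulus, so $T_a^n/\|T_a^n\|$ oscillates on a two-dimensional dominant eigenspace rather than concentrating on a line. My plan is to break this symmetry by considering a perturbed product of the form $M_n = T_a^{n}\cdot T_{a,b;k}$ for a suitable index $k$ and large $n$. Using the identity $T_{a,b;k}v_r = \lambda_r v_r + (a-b)\epsilon^{r(k-1)}u_k$ that is already available from the proof of Proposition~\ref{p.irred}, one expands each $u_k$ in the eigenbasis $\{v_l\}$ of $T_a$ and restricts attention to the real two-dimensional dominant eigenspace spanned by $\mathrm{Re}(v_1)$ and $\mathrm{Im}(v_1)$. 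The restriction of $T_{a,b;k}$ to this subspace is a $2\times 2$ block which, for generic $k$, is not a pure rotation but a hyperbolic map, so the dominant conjugate pair splits into two eigenvalues of strictly different moduli; normalization then yields a proximal element of $G$.

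The principal obstacle is verifying rigorously that for at least one admissible choice of $k$ the aforementioned $2\times 2$ block is indeed hyperbolic rather than a conformal rotation preserving the $|\lambda_1|=|\lambda_{d-1}|$ symmetry. One expects failure only on a lower-dimensional set of exceptional $(a,b)$, but those cases must still be disposed of (likely by switching to a different product or a different pair of points from $\supp\xi$, exploiting Assumption~\ref{Asu1}). If this direct perturbative construction proves awkward, a conceptual alternative is to identify the Lie algebra of the connected component $G^0$: using strong irreducibility, Burnside's density theorem, and the non-commutativity of $\log T_a$ with the matrices $T_{a,b;k}$ to produce enough iterated commutators, one aims to show that this Lie algebra equals $\mathfrak{sl}(d-1,\R)$. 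Then $G$ contains $SL(d-1,\R)$, which obviously has proximal elements (for instance, any diagonal matrix with distinct positive diagonal entries), and the contraction property for $G$ follows at once.
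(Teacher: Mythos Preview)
Your reduction via Propositions~\ref{propGM1} and~\ref{propGM2} to showing that ${\sf Zr}(\mathcal{S}_{a,b})$ contains a sequence $A_n$ with $\|A_n\|^{-1}A_n$ converging to rank one is exactly the paper's first move. From that point on, however, the two arguments diverge sharply.

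The paper does \emph{not} look for a proximal element in your sense (simple dominant eigenvalue). Instead it writes down, by hand, a single element $M\in{\sf Zr}(\mathcal{S}_{a,b})$ of the form
\[
M=\begin{pmatrix} I_{d-2} & 0\\ \varphi & c\end{pmatrix},
\]
so that only the last row differs from the identity. For odd $d$ one takes the four-term product $M=T(a,b,\dots,a,b,a)\,T(a,b,\dots,a,b,b)^{-1}\,T(b,a,\dots,b,a,b)\,T(b,a,\dots,b,a,a)^{-1}$, which turns out to be unipotent ($c=1$) with explicit nonzero entries in $\varphi$; then $M^n$ has last row $(n\varphi,1)$, so $\|M^n\|\asymp n$ and $\|M^n\|^{-1}M^n$ converges to a rank-one matrix. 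For even $d$ one takes $M=T_a\,T(a,\dots,a,b)^{-1}$; here $c=\det T_a/\det T(a,\dots,a,b)\ne 1$, and one uses $M^n$ if $|c|>1$, $M^{-n}$ if $|c|<1$, and swaps $a\leftrightarrow b$ to avoid $c=-1$. In every case the limit is read off from an explicit closed form for $M^n$, with no spectral perturbation theory at all.

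Your proposal, by contrast, is a genuine programme rather than a proof. The key step---showing that for some $k$ the restriction of $T_{a,b;k}$ (or of $T_a^n T_{a,b;k}$) to the dominant two-plane of $T_a$ is hyperbolic rather than conformal---is not carried out, and you yourself flag it as the principal obstacle. The Lie-algebra alternative (showing $\mathfrak{g}\supseteq\mathfrak{sl}(d-1,\R)$ via iterated brackets) is also only sketched. Both routes are plausible but would require substantially more work than the paper's explicit construction, which sidesteps the whole issue by producing a matrix whose powers are computable in closed form. Note in particular that the paper's odd-$d$ element is unipotent, so contraction here is driven by polynomial Jordan-block growth rather than by a spectral gap; restricting your search to proximal elements is an unnecessary constraint.
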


\begin{proof}
According to Proposition~\ref{propGM2}  it is sufficient to show that~${\sf Zr}(\mathcal{S}_{a,b})$ is contracting, since we have already established that~$\mathcal{S}_{a,b}$ and hence~${\sf Zr}(\mathcal{S}_{a,b})$ is strongly irreducible by Proposition~\ref{p.st.irred}.
Note that~$T^{-1}\in {\sf Zr}(\mathcal{S}_{a,b})$ for any~$T\in \mathcal{S}_{a,b}$, since the Zariski closure is necessary a group by Proposition~\ref{propGM1}. We consider two separate cases.

\vskip 5mm \noindent
\textbf{Case~$d=2l+1$ is odd.} Define
\begin{align*}
M&=T(a,b,\dots,a,b,{\bf a}) \, 
T(a,b,\dots,a,b,{\bf b})^{-1}
T(b,a,\dots,b,a,{\bf b}) \, 
T(b,a,\dots,b,a,{\bf a})^{-1}\in {\sf Zr}(\mathcal{S}_{a,b})
\end{align*}
After some algebraic computations, one can obtain that
\begin{align*}M=\begin{pmatrix}
1 & 0 & \dots & 0 & 0\\
0 & 1 & \dots & 0 & 0\\
\vdots & \vdots & \ddots & \vdots & \vdots \\
0 & 0 & \dots & 1 & 0\\
\phi_1 & \phi_2 & \dots & \phi_{2l-1} & 1
\end{pmatrix},
\end{align*}
where 
$$
\phi_{2j-1}=-\frac{(a-b)^2\left((1-a)(1-b)\right)^{l-j}(ab)^{j-1}}{(1-a)^l(1-b)^{l+1}+a^lb^{l+1}}, \ \text{and}\ \phi_{2j}=0,\ \ j=1,2,...,l.
$$
Hence
\begin{align*}M^n=\begin{pmatrix}
1 & 0 & \dots & 0 & 0\\
0 & 1 & \dots & 0 & 0\\
\vdots & \vdots & \ddots & \vdots & \vdots \\
0 & 0 & \dots & 1 & 0\\
n\phi_1 & n\phi_2 & \dots & n\phi_{2l-1} & 1
\end{pmatrix}\in {\sf Zr}(\mathcal{S}_{a,b}).
\end{align*}
It implies that~$\|M^n\|\approx Const \cdot n$ hence~$||M^n||^{-1} M^n$ converges to a matrix whose first~$d-2$ rows are zero rows, and thus~${\sf Zr}(\mathcal{S}_{a,b})$ is contracting by definition.

\vskip 5mm \noindent
\textbf{Case~$d=2l$ is even}. Define
\begin{align*}
M=T(a,a,\dots,a,a,a,{\bf a}) \, 
T(a,a,\dots,a,a,a,{\bf b})^{-1}\\
=\begin{pmatrix}
1 & 0 & \dots & 0 & 0\\
0 & 1 & \dots & 0 & 0\\
\vdots & \vdots & \ddots & \vdots & \vdots \\
0 & 0 & \dots & 1 & 0\\
c_1 & c_2 & \dots & c_{d-1} & c(a,b)
\end{pmatrix}
\end{align*}
where~$c_1=c_1(a,b),\dots,c_{d-1}=c_{d-1}(a,b)$
are some constants depending on~$a$ and~$b$, and~$c(a,b)=\det T(a,\dots,a,a)/\det T(a,\dots,a,b)$; observe also that
\begin{align*}
\det T(a,\dots,a,a)&=(1-a)^d-a^d\\
\det T(a,\dots,a,b)&=(1-a)^d-a^d+(a-b)[(1-a)^{d-1}+a^{d-1}]
\end{align*}

Assume initially that~$|c(a,b)|>1$, then 
$$
M^n=\begin{pmatrix}
1 & 0 & \dots & 0 & 0\\
0 & 1 & \dots & 0 & 0\\
\vdots & \vdots & \ddots & \vdots & \vdots \\
0 & 0 & \dots & 1 & 0\\
A_n c_1 & A_n c_2 & \dots & A_n c_{n-1} & c(a,b)^n
\end{pmatrix}
$$
where~$A_n=1+c(a,b)+c(a,b)^2+...+c(a,b)^{n-1}$,
so that~$||M^n||\ge {\sf const} \times c(a,b)^n\to\infty$ and thus~$||M^n||^{-1} M^n$ converges to a matrix whose first~$d-2$ rows are zeros. If~$|c(a,b)|<1$ then we can consider~$M^{-1}$ instead of~$M$, which has the form 
$$
M^{-1}=\begin{pmatrix}
1 & 0 & \dots &  0\\
\vdots & \vdots & \ddots &  \vdots \\
* & * & \dots  & c(a,b)^{-1}
\end{pmatrix}\in {\sf Zr}(\mathcal{S}_{a,b})
$$
and then apply exactly the same arguments as when~$|c(a,b)|>1$. Note that~$c(a,b)\neq 1$ since~$a\neq b$, so we only have to consider the case when~$c(a,b)=-1$. 
 
We have~$c(a,b)\neq c(b,a)$ since~$a\ne b$. Hence, w.l.o.g.\ we can assume that~$c(a,b)\neq -1$. 
So in all the cases, either~$||M^{n}||^{-1}M^{n}$ or~$||M^{-n}||^{-1}M^{-n}$ converges to a rank one matrix as~$n\to \infty$. 
\end{proof}

\section{Convergence rate of random polygons}
\label{Secrate}

\subsection{Convergence rate of rescaled polygons to flatness}
Throughout the rest of the paper we use the notation~$\log^+(x):=\max\{\log x, 0\}$.

Let~$\ell(T)=\max(\log^+(||T||),\log^+(||T^{-1}||))$. In this section, we suppose that Assumptions~\ref{Asu1} and~\ref{Asu2} as well as the following condition hold
\begin{align}\label{eqlogpm}
\E \ell(T) <\infty.
\end{align}

Let~$T_1,T_2,....$ be a sequence of random matrices having the same distribution as~$T$. We define Lyapunov exponents
\begin{align}\label{eqLyapexp}
\mu_j=\lim_{n\to\infty}\E\left(\frac{1}{n}\log~\sigma_j^{(n)}\right),\quad  j=1,2,...,d-1
\end{align}
where~$\sigma_1^{(n)}\ge \sigma_2^{(n)} ...\ge \sigma_{d-1}^{(n)}$ are the singular values of~$T^{(n)}=T_n T_{n-1}\dots T_1$, i.e., the square roots of the eigenvalues of~$\left(T^{(n)}\right)^{\T}  T^{(n)}$. Therefore, from the proof of Proposition~III.6.4 in~\cite{BL} (pp.~67--68), 
for any~$x,y\in{P}(\R^{d-1})$
\begin{align}\label{eq.exp.conv}
\lim_{n\to\infty} \frac{1}{n}\log \delta(T^{(n)} x, T^{(n)} y)\le \mu_2-\mu_1<0 \quad \text{ a.s.}
\end{align}

\begin{lemma}\label{lemheadtail}
Let~$\xi_1,\xi_2,\dots,\xi_d\in [0,1]$. Then
\begin{align*}
\prod_{i=1}^d \xi_i(1-\xi_i)\le  \xi_1 \xi_2\dots \xi_d+(1-\xi_1)(1-\xi_2)\dots (1-\xi_d)\le 1.
\end{align*}
\end{lemma}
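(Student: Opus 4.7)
The plan is to reduce both inequalities to an elementary statement about two quantities in $[0,1]$. Set
\[
P = \prod_{i=1}^d \xi_i, \qquad Q = \prod_{i=1}^d (1-\xi_i),
\]
so the middle expression is $P+Q$ and, because $\prod_{i=1}^d \xi_i(1-\xi_i) = \bigl(\prod_i \xi_i\bigr)\bigl(\prod_i (1-\xi_i)\bigr)$, the leftmost expression is $PQ$. Since each $\xi_i \in [0,1]$, both $P$ and $Q$ lie in $[0,1]$. The whole lemma then reduces to the two inequalities $PQ \le P+Q$ and $P+Q \le 1$ for $P,Q \in [0,1]$.

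For the upper bound, I would observe that $P = \xi_1 \cdot (\xi_2 \cdots \xi_d) \le \xi_1$ because the product of the remaining factors lies in $[0,1]$, and similarly $Q \le 1-\xi_1$. Adding these two gives $P+Q \le \xi_1 + (1-\xi_1) = 1$, which is the right inequality. (Any index could be used in place of $\xi_1$; the choice is immaterial.)

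For the lower bound, I would note that $Q \le 1$ implies $PQ \le P$, and then $P \le P+Q$ since $Q \ge 0$. This chain gives $PQ \le P+Q$ immediately, completing the proof.

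There is essentially no obstacle here: the lemma is a purely algebraic bookkeeping statement, and its content is just that the nonnegative quantities $P$ and $Q$, each at most $1$, satisfy $PQ \le P+Q \le 1$. The only thing worth being careful about is writing $\prod_i \xi_i(1-\xi_i)$ as $PQ$ rather than trying to bound each factor $\xi_i(1-\xi_i) \le 1/4$ and multiplying, which would give a weaker inequality and obscure the simple structure.
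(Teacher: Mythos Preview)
Your proof is correct. It is also slightly different from the paper's argument, and in fact more direct. The paper obtains the upper bound by the probabilistic observation that $P+Q$ is the probability of getting all heads or all tails when tossing $d$ independent coins with head-probabilities $\xi_i$; this is equivalent to your bound $P\le \xi_1$, $Q\le 1-\xi_1$, but phrased differently. For the lower bound the paper proceeds by induction on $d$, using the step
\[
\prod_{i=1}^{d}\xi_i +\prod_{i=1}^{d}(1-\xi_i)\ \ge\ \xi_d(1-\xi_d)\left[\prod_{i=1}^{d-1}\xi_i +\prod_{i=1}^{d-1}(1-\xi_i)\right],
\]
whereas you bypass the induction entirely with the one-line chain $PQ\le P\le P+Q$ (valid since $0\le Q\le 1$ and $P\ge 0$). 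Your reduction to the two scalars $P,Q\in[0,1]$ is the cleaner route and loses nothing; the paper's inductive argument is a minor detour by comparison.
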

\begin{proof}
The upper bound follows from the fact that it is equal to probability to get either all heads or all tails in an experiment with throwing~$d$ independent coins each with probability to turn up head equal to~$\xi_i$, $i=1,2,\dots,d$. To get the lower bound observe that for~$d=1,2,\dots$ we have
$$
\prod_{i=1}^{d}\xi_i +\prod_{i=1}^{d}(1-\xi_i)
\ge 
\left[
\prod_{i=1}^{d-1}\xi_i +\prod_{i=1}^{d-1}(1-\xi_i)\right]\cdot \xi_d(1-\xi_d)
$$
and since the statement is true for~$d=1$, we have proved the proposition.
\end{proof}

As it is implied from the following proposition, we can reformulate the requirement~\eqref{eqlogpm} as 
\begin{assumption}\label{Asu3}
\begin{align*}
\E  \log\left(|\det(T)|\right)=\E \log \left|\prod_{i=1}^d (1-\xi_i)-(-1)^d\prod_{i=1}^d \xi_i\right| >-\infty.
\end{align*}
\end{assumption}

\begin{proposition}\label{propAsu3}
Condition~\eqref{eqlogpm} holds if and only if
Assumption~\ref{Asu3} is fulfilled.
\end{proposition}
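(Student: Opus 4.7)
The plan is to observe first that the two sides of the equivalence are almost trivially comparable once one uses the standard relations between $\|T\|$, $\|T^{-1}\|$ and the singular values of $T$. I will split the proof into a reverse implication and a forward implication, with the key observation being that $\|T\|$ is automatically bounded above by a constant depending only on $d$ (since all entries of $T$ lie in $[-1,1]$), so $\log^+\|T\|$ is uniformly bounded and condition~\eqref{eqlogpm} reduces to the integrability of $\log^+\|T^{-1}\|$.

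For the implication \emph{Assumption~\ref{Asu3} $\Rightarrow$~\eqref{eqlogpm}}, I will write $T=U\Sigma V^{\T}$ in singular value decomposition with $\sigma_1\ge\dots\ge\sigma_{d-1}>0$, note that $\|T^{-1}\|=1/\sigma_{d-1}$ while $|\det T|=\prod_{i=1}^{d-1}\sigma_i\le \|T\|^{d-2}\sigma_{d-1}$, which gives
\[
\|T^{-1}\|\le \frac{\|T\|^{d-2}}{|\det T|}
\quad\text{and hence}\quad
\log^+\|T^{-1}\|\le (d-2)\log^+\|T\|+\log^+\!\frac{1}{|\det T|}.
\]
Since $|\det T|\le 2$, Assumption~\ref{Asu3} is equivalent to $\E\log^+(1/|\det T|)<\infty$, and combined with the uniform bound on $\log^+\|T\|$ this yields $\E\ell(T)<\infty$.

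For the converse \emph{\eqref{eqlogpm} $\Rightarrow$ Assumption~\ref{Asu3}}, the idea is to go in the opposite direction using the trivial lower bound $|\det T|=\prod_i\sigma_i\ge \sigma_{d-1}^{d-1}=\|T^{-1}\|^{-(d-1)}$. When $|\det T|<1$ this forces $\|T^{-1}\|>1$, and I obtain
\[
\log^+\!\frac{1}{|\det T|}\le (d-1)\log^+\|T^{-1}\|\le (d-1)\,\ell(T).
\]
Integrating, and using $\log^+|\det T|\le \log 2$ to control the positive part, I conclude $\E|\log|\det T||<\infty$, in particular $\E\log|\det T|>-\infty$, which is exactly Assumption~\ref{Asu3}.

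There is no real obstacle here; the proof is essentially bookkeeping with singular values. The only conceptual point worth isolating is the uniform bound $\|T\|\le C_d$ coming from the explicit entries of $T$ in~\eqref{matrix2}, which removes the $\log^+\|T\|$ term from both estimates and makes the equivalence genuinely one between $\E\log^+\|T^{-1}\|$ and $\E\log^+(1/|\det T|)$. The fact $|\det T|=\prod_{i=1}^d(1-\xi_i)-(-1)^d\prod_{i=1}^d\xi_i$ stated earlier in the text then lets Assumption~\ref{Asu3} be written in the explicit form given.
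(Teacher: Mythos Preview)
Your proof is correct and follows essentially the same approach as the paper: both arguments observe that $\|T\|$ is uniformly bounded (so $\log^+\|T\|$ is harmless), then sandwich $\log^+\|T^{-1}\|$ between constant multiples of $-\log|\det T|$ via the singular-value identities $\|T^{-1}\|=1/\sigma_{d-1}$ and $|\det T|=\prod_i\sigma_i$. The only cosmetic differences are that the paper derives the upper bound $\|T^{-1}\|\le C_2/|\det T|$ from the adjugate formula rather than from $|\det T|\le\|T\|^{d-2}\sigma_{d-1}$, and that the paper uses the sharper bound $|\det T|\le 1$ (from Lemma~\ref{lemheadtail}) where you use $|\det T|\le 2$; neither affects the argument.
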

\begin{proof}
Noticing that all the elements of~$T$ are bounded,  and using the formula for inversion of matrices we obtain that
\begin{align}\label{eqT-1<det}
||T||\le C_1, \quad ||T^{-1}||\le \frac{C_2}{|\det(T)|}
\end{align}
where~$C_i$, $i=1,2,\dots$ here and further in the text denote some non-random positive constants.
Let~$\sigma_1\ge \sigma_2\ge \dots \sigma_{d-1}>0$ be the singular values of matrix~$T$, that is, the square roots of the eigenvalues of~$T^{\T} T$, arranged in the decreasing order. Then~$||T^{-1}||=1/\sigma_{d-1}$. On the other hand, using the fact that there is a unitary matrix~$U$ such that~$U^{\T}  (T^{\T}   T)U$ is a diagonal matrix with elements~$\sigma_i^2$, we obtain that 
$$
\det(T)=\sigma_1 \sigma_2 \dots \sigma_{d-1}
\ge \left(\sigma_{d-1}\right)^{d-1}
$$
so that
$$
||T^{-1}||=\frac 1{\sigma_{d-1}}\ge 
\frac{1}{\left|\det(T)\right|^{\frac{1}{d-1}}}.
$$
On the other hand it is easy  that
$$
\det(T)=\prod_{i=1}^d (1-\xi_i)-(-1)^d\prod_{i=1}^d \xi_i
$$
which is always non-negative for odd~$d$, but can be  positive as well as negative for even~$d$; in  both cases~$|\det(T)|\le 1$, as it easily follows from Lemma~\ref{lemheadtail}. Consequently,
\begin{align*}
\log^+ \left(||T^{-1}|| \right)
&\le
\log^+\left(\frac{C_2}{|\det(T)|}\right)
\le 
\log^+\left(\frac{C_2+1}{|\det(T)|}\right)
\\ & \le 
\log\left(\frac{1}{|\det(T)|}\right)
 =
  -\log\left(|\det(T)|\right),
\\
\log^+\left( ||T^{-1}|| \right) 
&\ge 
\log^+\left(\frac{1}{|\det(T)|^{\frac{1}{d-1}}}\right)
\ge -\frac 1{d-1} \log\left(|\det(T)|\right).
\end{align*}
Since~$\log^+ ||T||$ is bounded above by some constant,  the statement of the proposition follows.
\end{proof}

Notice that since
\begin{align}\label{eqsumofsv}
\mu_1+\mu_2+...+\mu_{d-1}=\E(\log|\det(T)|)
\end{align}
all Lyapunov exponents~$\mu_j$, $j=1,2,...,d-1$ are finite if and only if Assumption~\ref{Asu3} is fulfilled. Therefore, using~\eqref{eq.exp.conv}, we can deduce the following
\begin{theorem}\label{mainthm3}
Suppose that Assumptions~\ref{Asu1}, \ref{Asu2} and~\ref{Asu3} hold. Then the sequence of polygons~$L_n$ converges to flatness with at least exponential rate with parameter~$\mu=\mu_1-\mu_2\in (0,\infty)$
\end{theorem}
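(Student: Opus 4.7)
The plan is to combine three pieces: finiteness of the Lyapunov exponents, strict simplicity of the top exponent, $\mu_1 > \mu_2$, and the deterministic area estimate buried inside the proof of Proposition~\ref{lemflat}.

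First, by Proposition~\ref{propAsu3}, Assumption~\ref{Asu3} is equivalent to the integrability condition~\eqref{eqlogpm}, i.e.\ $\E\,\ell(T)<\infty$. Under this condition the Furstenberg--Kesten theorem ensures that each Lyapunov exponent $\mu_j$ in~\eqref{eqLyapexp} is well defined and finite, and \eqref{eqsumofsv} records that their sum equals $\E\log|\det T|$, which is finite (again by Assumption~\ref{Asu3}).

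Next, the key analytic input is the classical simplicity theorem for the top Lyapunov exponent of products of random matrices (Theorem~III.6.1 in~\cite{BL}): provided the integrability holds and the semigroup generated by the support of $T$ is strongly irreducible and contracting, one has $\mu_1 > \mu_2$. The latter two structural properties are exactly the content of Propositions~\ref{p.st.irred} and~\ref{pr.contra} proved in Section~\ref{Secd>=4}, so no new verification is required. This immediately gives $\mu := \mu_1 - \mu_2 \in (0,\infty)$.

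Once $\mu > 0$ is in hand, inequality~\eqref{eq.exp.conv} yields, almost surely,
\begin{align*}
\limsup_{n\to\infty}\frac{1}{n}\log\delta\bigl(T^{(n)}\bar x,T^{(n)}\bar y\bigr)\le -\mu.
\end{align*}
To transfer this decay to the flatness ratio $\Area(L_n)/(\max_j\|l_j^{(n)}\|)^2$, I would reuse the chain of inequalities in the proof of Proposition~\ref{lemflat} without change: it shows precisely that this ratio is bounded by a constant depending only on $d$ times $\sqrt{\delta_n}$, where $\delta_n=\delta(\overline{x^{(n)}},\overline{y^{(n)}})^2$. Taking logarithms, dividing by $n$, and passing to the limit then produces exponential decay of the flatness ratio at rate $\mu$ (modulo the numerical factor $\tfrac{1}{2}$ inherited from the square root, which is absorbed into the definition of the rate in the standard way).

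The only nontrivial step is the strict inequality $\mu_1>\mu_2$, and the hard work for it has already been completed in Section~\ref{Secd>=4}; here one only has to cite the simplicity theorem. Everything else is repackaging material already present in Propositions~\ref{lemflat} and~\ref{propAsu3}.
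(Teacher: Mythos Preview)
Your proposal is correct and follows essentially the same route as the paper: the paper's argument is extremely terse, amounting to the observation that Assumption~\ref{Asu3} makes all $\mu_j$ finite and then invoking~\eqref{eq.exp.conv} (derived from Proposition~III.6.4 in~\cite{BL}), which already packages both the rate $\mu_2-\mu_1$ and the strict inequality. You unpack the same ingredients more explicitly---citing the simplicity theorem separately and spelling out the passage from the angular distance $\delta$ to the flatness ratio via Proposition~\ref{lemflat}---but there is no genuine methodological difference. One small remark: since~\eqref{eq.exp.conv} concerns $T^{(n)}$ on $\R^{d-1}$ while Proposition~\ref{lemflat} is phrased for $H^{(n)}$ on $\R^d$, you should also invoke the quantitative bound $\tilde\delta_n^2\le d\,\delta_n^2$ from the proof of Proposition~\ref{propcond} to bridge them; this is harmless but worth mentioning.
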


Now let us give an  ``easier" sufficient condition for Assumption~\ref{Asu3} which  depends only on the distribution of one~$\xi$. 

\begin{proposition}\label{propsuff}
Suppose that~$d=3,5,\dots$ is odd. If~$\E|\log \xi| <\infty$
and~$\E|\log (1-\xi)| <\infty$ then Assumption~\ref{Asu3} is fulfilled. A sufficient condition for these expectations to be finite is 
\begin{equation}\label{discond}
\limsup_{v\downarrow 0} \frac{\P(\xi<v)}{v^\alpha}<\infty
\text{ and }
\limsup_{v\uparrow 1} \frac{\P(\xi>v)}{(1-v)^\alpha}<\infty
\end{equation}
for some~$\alpha>0$.
\end{proposition}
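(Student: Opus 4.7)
The plan is to split the proposition into two independent tasks. First I verify that finiteness of $\E|\log\xi|$ and $\E|\log(1-\xi)|$ implies Assumption~\ref{Asu3}, and then I show that the polynomial tail condition~\eqref{discond} forces both of these log--moments to be finite.

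For the first implication, the key observation is that $d$ is odd, so $(-1)^d=-1$ and
\[
\det(T)=\prod_{i=1}^d(1-\xi_i)+\prod_{i=1}^d \xi_i
\]
is a sum of two non-negative quantities, each strictly positive almost surely under Assumption~\ref{Asu1}. Discarding the first summand yields the a.s.\ lower bound $\det(T)\ge \prod_{i=1}^d \xi_i>0$, and hence
\[
\log|\det(T)|=\log\det(T)\ge \sum_{i=1}^d \log\xi_i.
\]
Since $\xi\in[0,1]$ forces $\log\xi\le 0$, the hypothesis $\E|\log\xi|<\infty$ is equivalent to $\E\log\xi>-\infty$, so $\E\log|\det(T)|\ge d\,\E\log\xi>-\infty$, which is exactly Assumption~\ref{Asu3}. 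Note that only one of the two moment hypotheses is actually needed for this step, since the symmetric argument using the other summand $\prod(1-\xi_i)$ works with $\E|\log(1-\xi)|$ alone; the statement as written is slightly stronger than necessary.

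For the second implication, I use the layer--cake identity. Because $\xi\in[0,1]$ we have $|\log\xi|=-\log\xi\ge 0$, so
\[
\E|\log\xi|=\int_0^\infty \P(-\log\xi>t)\,\rmd t=\int_0^\infty \P(\xi<e^{-t})\,\rmd t.
\]
The hypothesis $\limsup_{v\downarrow 0}v^{-\alpha}\P(\xi<v)<\infty$ furnishes constants $C<\infty$ and $t_0$ such that $\P(\xi<e^{-t})\le C e^{-\alpha t}$ for all $t\ge t_0$; for $0\le t<t_0$ the integrand is trivially bounded by $1$. Splitting the integral at $t_0$ gives $\E|\log\xi|\le t_0+C/\alpha<\infty$. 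The identical argument applied to the random variable $1-\xi$, using the second tail bound in~\eqref{discond} (which is precisely a polynomial bound on $\P(1-\xi<v)$ near zero), gives $\E|\log(1-\xi)|<\infty$.

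No step is really an obstacle: the proposition reduces to combining the elementary geometric lower bound on $\det(T)$, which is available only when $d$ is odd because otherwise the two products can nearly cancel, with the standard tail--to--moment estimate via layer--cake. The only place where genuine care is needed is keeping track of signs: since $\xi\in[0,1]$ one must remember that $\log\xi$ and $\log(1-\xi)$ are non-positive, so the hypotheses are really lower bounds on $\E\log\xi$ and $\E\log(1-\xi)$.
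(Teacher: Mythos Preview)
Your proof is correct. For the second implication (tail bound $\Rightarrow$ finite log--moments) you do essentially what the paper does: a layer--cake identity followed by splitting the integral where the polynomial tail estimate kicks in.

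For the first implication you take a slightly different and in fact cleaner route. The paper invokes Lemma~\ref{lemheadtail} to bound $\det(T)=\prod\xi_i+\prod(1-\xi_i)$ below by $\prod\xi_i(1-\xi_i)$, and then needs \emph{both} $\E|\log\xi|<\infty$ and $\E|\log(1-\xi)|<\infty$ to conclude. You instead drop one of the two non-negative summands directly, obtaining $\det(T)\ge\prod\xi_i$ (or symmetrically $\ge\prod(1-\xi_i)$), which is a sharper lower bound requiring only one of the two moment hypotheses. Your observation that the proposition's hypotheses are stronger than necessary is therefore accurate; the paper's use of Lemma~\ref{lemheadtail} obscures this.
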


\begin{remark}
Note that when~$d$ is even we would not be able to bound~$|\det(T)|$ from below by the products of~$\xi_i(1-\xi_i)$ as easily as it is done in the following proof. Indeed, if we let all~$\xi_i=1/2$ then~$\det(T)=0$ while all~$\xi_i(1-\xi_i)=1/4>0$.
\end{remark}

\begin{proof}[Proof of Proposition~\ref{propsuff}.]
The first part of the statement follows immediately from Lemma~\ref{lemheadtail}  since
\begin{align*}
\E\log |\det (T)|&=\E\log\left[\prod_{i=1}^{d}\xi_i +\prod_{i=1}^{d}(1-\xi_i)\right]\\
& \ge \E\log\left[\prod_{i=1}^{d}\xi(1-\xi_i)\right]
=\sum _{i=1}^{d}\left(\E \log \xi_i+\E \log (1-\xi_i)\right).
\end{align*}
To prove the second part, note that
\begin{align*}
\E|\log\xi|&\le 1+\E\left[|\log\xi|\cdot 1_{\xi<e^{-1}}\right]
=1+\int_0^{\infty} \P\left(-(\log \xi) \cdot 1_{\xi<e^{-1}}>u\right)\rmd u
\\ & =1+\int_0^1\dots+\int_1^{\infty}\dots\\
&=1+\int_0^1 \P\left(e \xi  <1\right)\rmd u
+\int_1^{\infty} \P\left(-\log \xi >u\right)\rmd u
\\ &=1+\P\left(e \xi <1\right)+\int_{0}^{e^{-1}} \frac{\P\left(\xi<v\right)}{v}\rmd v<\infty
\end{align*}
since 
$$\frac{\P\left(\xi<v\right)}{v}\le \frac{\rm const}{v^{1+\alpha}}$$ 
for sufficiently small~$v$.
The expectation~$\E|\log(1-\xi)|$ is bounded in exactly the same way.
\end{proof}

An interesting example is when~$\xi$ has a uniform distribution, as in the paper~\cite{VOL}.

\begin{proposition}
If the distribution of~$\xi$ is uniform on~$[0,1]$ then Assumption~\ref{Asu3} is fulfilled for all~$d\ge 3$.
\end{proposition}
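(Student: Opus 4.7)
For odd $d$, the statement follows immediately from Proposition~\ref{propsuff}, since for $\xi\sim U[0,1]$ we have $\E|\log\xi|=\E|\log(1-\xi)|=\int_0^1|\log x|\,\rmd x=1<\infty$. So the substance of the proof lies in the even case $d=2l$, where Lemma~\ref{lemheadtail} does not allow us to replace $|\det T|$ by $\prod\xi_i(1-\xi_i)$, and the zero set $\{\prod(1-\xi_i)=\prod\xi_i\}$ is a genuine issue.

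The plan for even $d$ is to exploit the factorization
\[
\det T=\prod_{i=1}^d(1-\xi_i)-\prod_{i=1}^d\xi_i=\Bigl(\prod_{i=1}^d\xi_i\Bigr)\bigl(e^{Z}-1\bigr),\qquad Z:=\sum_{i=1}^d\log\frac{1-\xi_i}{\xi_i}.
\]
This gives $\log|\det T|=\sum_{i=1}^d\log\xi_i+\log|e^{Z}-1|$. The first piece has expectation $-d$, so the only possible source of a $-\infty$ is the second piece, whose singularity lies on $\{Z=0\}$, exactly the bad set.

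To control $\log|e^Z-1|$, I would first note the elementary two-sided bounds: for $|Z|\le 1$ one has $|e^Z-1|\ge c|Z|$ for some fixed $c>0$, so $\log|e^Z-1|\ge \log|Z|-C_1$; for $|Z|\ge 1$, $|e^Z-1|$ is bounded away from $0$ (by $1-e^{-1}$ on the negative side, and $\ge e-1$ on the positive side), so $\log|e^Z-1|\ge -C_2$. Thus it suffices to show $\E\bigl(|\log|Z||\,\mathbf 1_{|Z|\le 1}\bigr)<\infty$.

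The key observation is that when $\xi_i\sim U[0,1]$, each $\eta_i:=\log((1-\xi_i)/\xi_i)$ has the standard logistic density $g(x)=e^x/(1+e^x)^2$, which is everywhere bounded by $g(0)=1/4$. Consequently $Z=\eta_1+\cdots+\eta_d$, being a convolution, has a density $f_Z$ satisfying
\[
f_Z(z)=\int g(z-y)\,f_{\eta_2+\cdots+\eta_d}(y)\,\rmd y\le \sup g=\tfrac14,
\]
so that
\[
\E\bigl(|\log|Z||\,\mathbf 1_{|Z|\le 1}\bigr)=\int_{-1}^{1}|\log|z||\,f_Z(z)\,\rmd z\le \tfrac14\int_{-1}^{1}|\log|z||\,\rmd z<\infty.
\]
Combining all pieces yields $\E\log|\det T|>-\infty$, which is Assumption~\ref{Asu3}. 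The main (and really the only) obstacle is handling the singularity of $\log|e^Z-1|$ at $Z=0$, and the uniform boundedness of the logistic density, preserved under convolution, resolves it cleanly.
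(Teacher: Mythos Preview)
Your proof is correct, and the route differs from the paper's in an instructive way.

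Both arguments begin from the same factorization. Writing $u_i=(1-\xi_i)/\xi_i$, the paper splits
\[
\E\log|\det T|=-d+\int_0^\infty\!\!\cdots\!\int_0^\infty\frac{\log|1-u_1\cdots u_d|}{(1+u_1)^2\cdots(1+u_d)^2}\,\rmd u_1\cdots\rmd u_d,
\]
which is exactly your decomposition $\E\log|\det T|=\E\sum\log\xi_i+\E\log|e^Z-1|$ with $Z=\sum\log u_i$. From here the two proofs diverge. The paper treats the remaining $d$-fold integral by hand: it changes variables once more in the last coordinate, splits the inner one-dimensional integral into four explicit ranges around the singularity $v=1$, and bounds each piece separately. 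You instead observe that the only singularity of $\log|e^Z-1|$ is at $Z=0$, and that $Z$, being a sum of i.i.d.\ standard logistic variables, has a density bounded by $1/4$; this immediately makes $\int_{-1}^1|\log|z||\,f_Z(z)\,\rmd z$ finite.

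Your argument is shorter and more conceptual: it isolates the genuine obstruction (the log-singularity on the zero set of $\det T$) and dispatches it with a one-line convolution bound, avoiding the case-by-case integral estimates. It also points to a natural generalization beyond the uniform case: the same reasoning works whenever $\log\bigl((1-\xi)/\xi\bigr)$ has a bounded density and $\E|\log\xi|,\E|\log(1-\xi)|<\infty$. The paper's computation, by contrast, is tailored to the uniform density and would need to be redone for other distributions.

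One minor point worth making explicit in a final write-up: since $\sum\log\xi_i$ and $\log|e^Z-1|$ are not independent, you are not literally adding expectations; rather you are using the pointwise inequality $-\log|\det T|\le -\sum\log\xi_i+(\log|e^Z-1|)^-$ together with $\log|\det T|\le 0$, and then bounding the expectation of the right-hand side. This is implicit in what you wrote but deserves one sentence.
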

\begin{proof}
The case when~$d$ is odd immediately follows from Proposition~\ref{propsuff} so we assume that~$d$ is even. We have
\begin{align*}
&\E \log |\det T|  =\int_0^1\dots\int_0^1
\log\left|(1-x_1)\dots(1-x_d)-x_1\dots x_d\right| \rmd x_1\dots \rmd x_d
\\&=
\int_0^1\dots\int_0^1
\log\left(x_1\dots x_d\right) \rmd x_1\dots \rmd x_d
\\ & +
\int_0^1\dots\int_0^1
\log\left|1-\frac{1-x_1}{x_1}\dots\frac{1-x_d}{x_d}\right| \rmd x_1\dots \rmd x_d \\
&=
-d+ \int_0^\infty \dots\int_0^\infty
\frac{\log\left|1-u_1\dots u_d\right|}
{(1+u_1)^2\dots (1+u_d)^2}\rmd u_1\dots \rmd u_d 
\\ &=
-d+ \int_0^\infty \dots\int_0^\infty
\frac{u_1\dots u_{d-1}}
{(1+u_1)^2\dots (1+u_{d-1})^2}
\left(\int_0^\infty
\frac{\log|1-v| \rmd v}
{ (u_1\dots u_{d-1}+v)^2}
\right)\rmd u_1\dots \rmd u_{d-1} 
\end{align*}
where the inner integral
\begin{align*}
&\int_0^\infty
\frac{\log|1-v| \rmd v}
{ (u_1\dots u_{d-1}+v)^2}  =\left(\int_0^{1/2}+\int_{1/2}^{3/2}
+\int_{3/2}^{2}+\int_{2}^{\infty}\right)
\frac{\log|1-v|}
{(u_1\dots u_{d-1}+v)^2}
\rmd v
\\ &\ge 
\int_0^{1/2}
\frac{-\log 2}
{(u_1\dots u_{d-1}+v)^2}
\rmd v
 +
\int_{1/2}^{3/2}
\frac{\log |1-v|}
{(u_1\dots u_{d-1}+1/2)^2}
\rmd v \\ & +
\int_{3/2}^{2}
\frac{-\log 2}
{(u_1\dots u_{d-1}+v)^2}
\rmd v
+
0\\
&\ge 
\int_0^{\infty}
\frac{-\log 2}
{(u_1\dots u_{d-1}+v)^2}
\rmd v
+
\int_{1/2}^{3/2}
\frac{\log |1-v|}
{(u_1\dots u_{d-1}+1/2)^2}
\rmd v
\\& = -\frac{\log 2}{u_1\dots u_d}+
-\frac{1+\log 2}{(u_1\dots u_{d-1}+1/2)^2}.
\end{align*}
Consequently,
\begin{align*}
&\E \log |\det T|\ge  -d -\log 2\,
\int_0^\infty \dots\int_0^\infty
\frac{\rmd u_1\dots \rmd u_{d-1} }
{(1+u_1)^2\dots (1+u_{d-1})^2}\\
&-
\int_0^\infty \dots\int_0^\infty
\frac{(1+\log 2)[u_1\dots u_{d-1}]\rmd u_1\dots \rmd u_{d-1} }
{(1+u_1)^2\dots (1+u_{d-1})^2(1/2+[u_1\dots u_{d-1}])^2}
\\
&\ge  -d -\left[\log 2 +\frac{1+\log 2}{2}\right]
\int_0^\infty \dots\int_0^\infty
\frac{\rmd u_1\dots \rmd u_{d-1} }
{(1+u_1)^2\dots (1+u_{d-1})^2}> -\infty
\end{align*}
since~$a/(1/2+a)^2\le 1/2$ for~$a\ge 0$.
\end{proof}

The next statement shows that there are, in fact, examples of distributions for which Assumption~\ref{Asu3} is {\em not fulfilled}.
\begin{proposition}
Suppose~$\xi_i$ are i.i.d.\ with density
$$
f(x)=\begin{cases}  \displaystyle
\frac{c}{x \log^{1+\delta} x}, & 0<x\le 1/2;\\
\displaystyle \frac{c}{(1-x) \log^{1+\delta} (1-x)}, & 1/2<x<1;\\
0, &\text{otherwise}
\end{cases}
$$
where~$\delta\in(0,1/2]$  and~$c=c(\delta)\in(0,\infty)$ is the appropriate constant. Then Assumption~\ref{Asu3} is \underline{not} satisfied.
\end{proposition}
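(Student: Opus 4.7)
The plan is to show directly that $\E \log|\det T| = -\infty$.  Since $|\det T|\le 1$ almost surely by Lemma~\ref{lemheadtail}, we have $-\log|\det T|\ge 0$, and the representation
$$
\E(-\log|\det T|) = \int_0^\infty \P(|\det T| < e^{-s})\,ds
$$
is valid.  It therefore suffices to produce a lower bound on $\P(|\det T| < \epsilon)$ that decays in $\epsilon$ slowly enough for the integral to diverge.

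First I would compute the tails of $\xi$.  The substitution $u=\log(1/x)$ in the defining integral immediately yields, for $v\in(0,1/2]$,
$$
\P(\xi < v) = \frac{c}{\delta\,|\log v|^\delta},
$$
and by the symmetry of $f$ about $1/2$ the same formula holds for $\P(\xi > 1-v)$.  Both tails are polynomial in $|\log v|$, which is the key reason the density is borderline.

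The main algebraic step is to localise on the event $A_\epsilon:=\{\xi_1<\epsilon\}\cap\{\xi_d>1-\epsilon\}$ for small $\epsilon$.  On $A_\epsilon$ one has $\prod_{i=1}^d \xi_i\le \xi_1<\epsilon$ and $\prod_{i=1}^d(1-\xi_i)\le 1-\xi_d<\epsilon$, so by the triangle inequality
$$
|\det T| \le \prod_{i=1}^d(1-\xi_i) + \prod_{i=1}^d \xi_i < 2\epsilon,
$$
regardless of the parity of $d$.  Combining this with independence and the tail formula above,
$$
\P(|\det T|<2\epsilon)\ \ge\ \P(A_\epsilon)\ =\ \frac{c^2}{\delta^2\,|\log\epsilon|^{2\delta}}.
$$

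To finish, I would substitute $\epsilon=e^{-s}/2$ in the integral representation to obtain an integrand of order $(s+\log 2)^{-2\delta}$ for large $s$, which fails to be integrable at $+\infty$ precisely when $2\delta\le 1$.  Under the standing hypothesis $\delta\in(0,1/2]$ we are in exactly this regime, so the integral diverges, $\E\log|\det T|=-\infty$, and Assumption~\ref{Asu3} fails.  I do not anticipate a real obstacle; the only subtlety is to handle odd and even $d$ uniformly, which the trivial bound $|A-B|\le A+B$ for $A,B\ge 0$ handles automatically, so one never has to track the sign of $(-1)^d$.
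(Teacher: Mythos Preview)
Your argument is correct and is a genuinely different (and somewhat slicker) route than the paper's.  The paper works by direct integration: it bounds $\log|\det T|$ above by the two--variable expression $\log\bigl(\xi_1\xi_2+(1-\xi_1)(1-\xi_2)\bigr)$, uses the symmetry $f(1-y)=f(y)$ to rewrite this as $\log\bigl(x(1-y)+y(1-x)\bigr)$, restricts to $[0,1/2]^2$, bounds further by $\log(x+y)$, and then changes variables to $u=-\log x$, $v=-\log y$ to see that the resulting integral $\int\!\!\int (\log 2 - v)\,u^{-1-\delta}v^{-1-\delta}\,1_{\{u>v\}}\,du\,dv$ diverges precisely when $2\delta\le 1$.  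Your approach instead reads off the exact tail $\P(\xi<v)=c\delta^{-1}|\log v|^{-\delta}$, localises on the explicit event $\{\xi_1<\epsilon,\ \xi_d>1-\epsilon\}$, and feeds the resulting lower bound $\P(|\det T|<2\epsilon)\ge c^2\delta^{-2}|\log\epsilon|^{-2\delta}$ into the layer--cake formula.

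What each buys: your argument makes the threshold $\delta=1/2$ completely transparent (two independent tails of order $|\log\epsilon|^{-\delta}$ combine to $|\log\epsilon|^{-2\delta}$, which is non-integrable against $ds=d|\log\epsilon|$ exactly when $2\delta\le1$), and the single bound $|A-(-1)^dB|\le A+B$ indeed handles both parities of~$d$ at once, whereas the paper treats odd~$d$ explicitly and only remarks that even~$d$ is similar.  The paper's computation, on the other hand, never needs the tail formula or the layer--cake representation; it stays entirely within elementary calculus.  Both reductions ultimately exploit the same mechanism---that $|\det T|$ is small whenever one $\xi_i$ is near~$0$ and another is near~$1$---but yours isolates this probabilistically rather than analytically.
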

\begin{proof}
Assuming~$d$ is odd  and noticing that~$f(1-y)=f(y)$ and that 
$$x_1\dots x_d+(1-x_1)\dots(1-x_d)\le 1$$ 
by Lemma~\ref{lemheadtail} we have
\begin{align*}
& \E \log |\det T|=\int_0^1\dots\int_0^1
\log\left(x_1\dots x_d+(1-x_1)\dots(1-x_d)\right) f(x_1)\dots f(x_d)\rmd x_1\dots \rmd x_d
\\&\le
\int_0^1\dots\int_0^1
\log\left(x_1 x_2+(1-x_1)(1-x_2)\right)
f(x_1)\dots f(x_d)\rmd x_1\dots \rmd x_d\\
&=\int_0^1\int_0^1 \log(x(1-y)+y(1-x))f(x)f(y)\rmd x \rmd y\\
&\le \int_0^{1/2}\int_0^{1/2} \log(x+y-xy)f(x)f(y)\rmd x \rmd y \\ &
\le \int_0^{1/2}\int_0^{1/2} \log(x+y)f(x)f(y)\rmd x \rmd y = \int_0^{1/2}\int_0^{1/2} 
\frac{\log(x+y)}{(x\log^{1+\delta} x)( y\log^{1+\delta} y) }\rmd x \rmd y \\&
=
\int_{\log 2}^{\infty}\int_{\log 2}^{\infty} 
\frac{\log(e^{-u}+e^{-v})}{u^{1+\delta} v^{1+\delta}}\rmd u \rmd v =
2\int_{\log 2}^{\infty}\int_{\log 2}^{\infty} 
\frac{\log(e^{-u}+e^{-v})}{u^{1+\delta} v^{1+\delta}} 1_{u>v}\rmd u \rmd v \\ &
\le
2\int_{\log 2}^{\infty}
\left(
\int_{\log 2}^{\infty} 
\frac{\log(2e^{-v})}{u^{1+\delta} v^{1+\delta}} 1_{u>v}\rmd u\right) \rmd v =
\frac{2}{\delta}\int_{\log 2}^{\infty}
\frac{\log(2)-v}{ v^{1+2\delta}}  \rmd v=-\infty
\end{align*}
since~$\delta\le 1/2$. The case when~$d$ is even can handled similarly.
\end{proof}

The next statement shows how quickly the lengths of the largest side of the polygon converge to zero.
\begin{lemma}\label{lemtildeS}
Suppose that Assumptions~\ref{Asu1}, \ref{Asu2}, \ref{Asu3} are fulfilled. Let 
\begin{align}\label{eqMndefined}
\M_n=\max_{j=1,\dots,d}\| l_j^{(n)}\|
\end{align}
be the length of the largest side of~$L_n$. Then
$$
\lim_{n\to\infty}\frac{1}{n}\log (\M_n)= \mu_1 \quad \text{a.s.}
$$
\end{lemma}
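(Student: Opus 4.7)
The plan is to compare $\M_n$ with the Euclidean norms of $x^{(n)}=T^{(n)}x^{(0)}$ and $y^{(n)}=T^{(n)}y^{(0)}$ in the $(d-1)$-dimensional representation from Section~\ref{Secd>=4}, and then invoke the standard Lyapunov-exponent convergence for products of strongly irreducible random matrices.

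First I would record the elementary sandwich
$$
\tfrac{1}{\sqrt{d}}\max(\|x^{(n)}\|,\|y^{(n)}\|)\ \le\ \M_n\ \le\ \|x^{(n)}\|+\|y^{(n)}\|,
$$
which follows from $\M_n^2=\max_j\bigl((x_j^{(n)})^2+(y_j^{(n)})^2\bigr)$ together with the trivial bounds $(x_j^{(n)})^2\le\|x^{(n)}\|^2$ and $\M_n^2\ge\frac{1}{d}\max(\|x^{(n)}\|^2,\|y^{(n)}\|^2)$. Because $x_d^{(n)}=-\sum_{j<d}x_j^{(n)}$, the $(d-1)$- and $d$-dimensional Euclidean norms of $x^{(n)}$ agree up to a constant depending only on $d$, and similarly for $y^{(n)}$. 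The starting vectors $x^{(0)},y^{(0)}\in\R^{d-1}$ are both non-zero for any non-degenerate initial polygon $L_0$, since otherwise all edges would be parallel to one coordinate axis and $L_0$ would collapse to a segment.

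Next, under Assumptions~\ref{Asu1}--\ref{Asu3}, the semigroup $\mathcal{S}_{\mu}$ is strongly irreducible and contracting by Propositions~\ref{p.st.irred} and~\ref{pr.contra}, and the integrability hypothesis~\eqref{eqlogpm} makes every $\mu_j$ finite. Under these conditions the standard corollary of Oseledets / Furstenberg--Kesten (cf.\ the argument behind~\eqref{eq.exp.conv} and Proposition~III.6.1 in~\cite{BL}) gives, for every \emph{fixed} non-zero $v\in\R^{d-1}$,
$$
\lim_{n\to\infty}\frac{1}{n}\log\|T^{(n)}v\|\ =\ \mu_1 \quad\text{a.s.}
$$
Applying this to $v=x^{(0)}$ and $v=y^{(0)}$, both sides of the sandwich converge a.s.\ to $\mu_1$, so $\frac{1}{n}\log\M_n\to\mu_1$ as claimed.

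The one delicate point is upgrading $\frac{1}{n}\log\|T^{(n)}\|\to\mu_1$ (pure Furstenberg--Kesten) to the directional statement $\frac{1}{n}\log\|T^{(n)}v\|\to\mu_1$ for deterministic $v$: a priori $v$ could lie in the slower part of the (random) Oseledets flag, but the strong irreducibility already established in Section~\ref{Secd>=4} precludes any deterministic invariant subspace, so every fixed direction attains the top exponent almost surely. Everything else is routine comparison of norms.
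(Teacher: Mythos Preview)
Your argument is correct and follows essentially the same route as the paper: both proofs sandwich $\M_n$ between constant multiples of $\|T^{(n)}x^{(0)}\|$ and $\|T^{(n)}y^{(0)}\|$ and then invoke the directional Lyapunov result from~\cite{BL} (the paper cites Theorem~III.7.2.i rather than Proposition~III.6.1, but the content you need is the same). The only cosmetic difference is that the paper first reduces to the maximum over $j=1,\dots,d-1$ via the triangle inequality and works entirely in $\R^{d-1}$, whereas you keep all $d$ coordinates and absorb the discrepancy through the comparability of the $(d-1)$- and $d$-dimensional norms.
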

\begin{proof}
First of all, observe that by the triangle inequality
$$
\max_{j=1,\dots,d-1}\| l_j\|
\le \max_{j=1,\dots,d}\|l_j\|
\le \max\left\{\| l_1\|+\dots+ \| l_{d-1}\|,\max_{j=1,\dots,d-1}\| l_j\|,\right\}\le (d-1)\max_{j=1,\dots,d-1} \| l_j\|
$$
so it suffices to prove the statement of the lemma for the first~$d-1$ sides of~$L_n$, i.e., we can redefine just inside of this proof~$\M_n$ as~$\max\{\|l_1^{(n)}\|,\| l_2^{(n)}\|,\dots,\| l_{d-1}^{(n)}\|\}$. Also, to avoid confusion, in this proof we will denote by~$\|\cdot\|_{(k)}$ the Euclidean norm in~$\R^k$, while~$\|\cdot\|$ is just a Euclidean norm in~$\R^2$.
By applying Theorem~III.7.2.i (pp.~72) in~\cite{BL},  we obtain
\begin{align}\label{eqII72}
\lim_{n\to\infty}\frac{1}{n}\log\| T^{(n)} x\|_{(d-1)}=\lim_{n\to\infty}\frac{1}{n}\log\| T_n...T_2T_1x\|_{(d-1)}=\mu_1
\quad \text{for each}\quad x\in\R^{d-1}\setminus\{0\}.
\end{align}
Now recall that the coordinates of~$l_j^{(n)}\in\R^2$ are the~$j$-th coordinates of~$x^{(n)}=T^{(n)} x^{(0)}$ and~$y^{(n)}=T^{(n)} y^{(0)}$ respectively. Omitting the superscript~${}^{(n)}$, we have
$$
\|l_j\|^2=x_j^2+y_j^2, \quad
\|x\|_{(d-1)}^2=x_1^2+\dots+x_{d-1}^2,
, \quad
\|y\|_{(d-1)}^2=y_1^2+\dots+y_{d-1}^2,
$$
so
$$
\frac{\|x\|^2_{(d-1)}}{d-1}\le \max_{j=1,\dots,d-1} x_j^2 \le \max_{j=1,\dots,d-1} \|l_j\|^2
\le x_1^2+\dots+x_{d-1}^2
+y_1^2+\dots+y_{d-1}^2=\|x\|_{(d-1)}^2+\|y\|_{(d-1)}^2
$$
Together with~\eqref{eqII72} this immediately implies
$$
\limsup_{n\to\infty}\frac{1}{n}\log \max\{\|l_1^{(n)}\|,\|l_2^{(n)}\|,\dots,\|l_{d-1}^{(n)}\|\} =\mu_1.
$$
\end{proof}

\subsection{Convergence rate of polygon vertices}
The purpose of this Section is to calculate the exact speed of convergence of (not rescaled) polygons~$L_n$ to a random point in the plane in the general case~$d\ge 3$, under some conditions.

Let~$(a_j^{(n)} , b_j^{(n)}),\ j=1,2,...,d$, be the  Cartesian coordinates of vertices~$A_d^{(n)},A_1^{(n)},A_2^{(n)},...,A_{d-1}^{(n)}$ respectively -- please note the unusual enumeration of the coordinates, which we do in order to use the same notation for matrix~$H$ given by~\eqref{matrix1}. We have the following linear relation
$$
a^{(n)}  = H_{n+1}^{\T}   a^{(n-1)} , b^{(n)}  = H_{n+1}^{\T}   b^{(n-1)}
$$
where~$a^{(n)}=\left(
a_1^{(n)},a_2^{(n)},...,a_{d}^{(n)}\right),b^{(n)}
=\left(b_1^{(n)},b_2^{(n)},...,b_{d}^{(n)}\right)$. 
We will make use of the following
\begin{proposition}[Theorem~4 in~\cite{MCK}]\label{stoch}
Let~$(X_k)_{k\ge 1}$ be a sequence of i.i.d.\ random stochastic~$d\times d$ matrices such that~$X_{n_0}X_{n-1}...X_2X_1$ is a positive matrix with a positive probability for some~$n_0<\infty$. Then there exists a random nonegative vector~$W=(w_1,w_2,...,w_d)$ such that~$w_1+w_2+...+w_d=1$ and
$$
X_nX_{n-1}...X_2X_{1}\to {\bf 1}^{\T}   W
$$
almost surely as~$n\to\infty$, where~${\bf 1}=(1,1,...,1)$. Moreover, if~$V=(v_1,...,v_d)$ is a random non-negative vector such that~$v_1+v_2+...v_d=1$, $V$ is independent of~$X_1$ then~$VX_1=V$ in distribution if and only if~$V=W$ in distribution. 
\end{proposition}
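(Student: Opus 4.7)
The plan is to prove the almost-sure convergence column-by-column, by showing that each column of $Y_n:=X_nX_{n-1}\cdots X_1$ collapses to a constant multiple of the all-ones vector. For $j=1,\dots,d$ let $v_n^{(j)}:=Y_n e_j$ be the $j$-th column, so the recursion $v_n^{(j)}=X_n v_{n-1}^{(j)}$ holds, and because every row of the row-stochastic matrix $X_n$ is a probability vector, each coordinate of $X_n w$ is a convex combination of the coordinates of $w$. Hence $M_n^{(j)}:=\max_i (v_n^{(j)})_i$ is non-increasing in $n$, $m_n^{(j)}:=\min_i (v_n^{(j)})_i$ is non-decreasing, and the oscillation $r_n^{(j)}:=M_n^{(j)}-m_n^{(j)}$ is monotone non-increasing.

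To upgrade monotonicity of $r_n^{(j)}$ to convergence to zero, I would invoke the classical Markov (Dobrushin) contraction inequality: if a row-stochastic $d\times d$ matrix $P$ has all entries $\ge\delta>0$, then $r(Pv)\le(1-d\delta)\,r(v)$ for every $v\in\R^d$. By hypothesis there exist $n_0$ and (by countable additivity) some $\delta>0$ for which $\P(X_{n_0}\cdots X_1\text{ has all entries }\ge\delta)>0$; the i.i.d.\ assumption then makes the block events $F_k:=\{X_{kn_0}\cdots X_{(k-1)n_0+1}\text{ has all entries }\ge\delta\}$ independent with this same positive probability, so Borel--Cantelli guarantees that $F_k$ occurs for infinitely many $k$ almost surely, each occurrence shrinking $r_n^{(j)}$ by a factor of $1-d\delta$. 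Thus $r_n^{(j)}\to 0$ a.s., and combined with the monotonicity of the extrema this forces $v_n^{(j)}\to w_j{\bf 1}^\T$ a.s.\ for some random $w_j\in[0,1]$. Stochasticity of $Y_n$ gives $\sum_j w_j=1$, and therefore $Y_n\to{\bf 1}^\T W$ with $W=(w_1,\dots,w_d)$.

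For the ``moreover'' part, the i.i.d.\ property yields the distributional identity $V X_1 X_2 \cdots X_n \stackrel{d}{=} V X_n X_{n-1} \cdots X_1$, and the right-hand side converges almost surely to $V\,{\bf 1}^\T W=(V{\bf 1}^\T)W=W$ because $V{\bf 1}^\T=1$ and $V$ is independent of the $X_i$'s. If $V$ is stationary, i.e.\ $V X_1\stackrel{d}{=}V$, iteration gives $V X_1\cdots X_n\stackrel{d}{=}V$ for every $n$, so passing to the distributional limit yields $V\stackrel{d}{=}W$. Conversely, the decomposition $Y_{n+1}=(X_{n+1}X_n\cdots X_2)X_1$ and the first part applied to the shifted i.i.d.\ sequence $(X_{n+1},\dots,X_2)$ yield ${\bf 1}^\T W=({\bf 1}^\T W')X_1$, i.e.\ $W=W' X_1$ almost surely, where $W'\stackrel{d}{=}W$ and $W'$ is independent of $X_1$. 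Consequently, any $V\stackrel{d}{=}W$ independent of $X_1$ satisfies $V X_1\stackrel{d}{=}W'X_1\stackrel{d}{=}W\stackrel{d}{=}V$.

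The main obstacle is the uniqueness half: one has to exploit the reversal identity for products of i.i.d.\ matrices and carefully isolate a copy $W'$ of $W$ independent of the leading factor $X_1$, rather than simply invoking a stationarity principle on the simplex. The existence half is routine once the Dobrushin contraction lemma is in hand.
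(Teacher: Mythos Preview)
The paper does not prove this proposition at all; it is simply quoted as Theorem~4 of~\cite{MCK} and used as a black box in the proof of Theorem~\ref{mainthm5}. So there is no ``paper's own proof'' to compare against.

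Your argument is essentially correct and follows the classical route. A couple of minor remarks. First, in the ``only if'' half of the uniqueness, the iteration $VX_1\cdots X_n\stackrel{d}{=}V$ and the reversal $VX_1\cdots X_n\stackrel{d}{=}VX_n\cdots X_1$ both require $V$ to be independent of the entire sequence $(X_k)_{k\ge1}$, whereas the hypothesis only gives $V\perp X_1$; you should say explicitly that, since only the \emph{law} of $V$ is at stake, one may replace $V$ by a copy independent of everything. Second, in the converse you correctly extract $W=W'X_1$ with $W'\perp X_1$ and $W'\stackrel{d}{=}W$; note that this already shows $W$ itself is a stationary distribution, so the ``if'' direction follows immediately from $(V,X_1)\stackrel{d}{=}(W',X_1)$. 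With these small clarifications your proof is complete.
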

\begin{theorem}\label{mainthm5}
Suppose that Assumptions~\ref{Asu1} and~\ref{Asu3} hold then  the polygon~$L_n$ converges almost surely to a random point~$P$ inside the initial polygon~$L_0$ such that
$$
\max_{j=1,2,...,d} \| PA^{(n)}_j\| \sim C e^{\mu_1n}
$$ 
almost surely as~$n\to \infty$, in the sense that~$\frac 1n \, \log \left(\max \| PA^{(n)}_j\|\right) \to \mu_1$ where~$\mu_1$ is defined in~\eqref{eqLyapexp}.
\end{theorem}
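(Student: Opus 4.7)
The plan has three ingredients. First, although the edge matrix $H$ of~\eqref{matrix1} is not row-stochastic (its row sums equal $(1-\xi_j)+\xi_{j+1}$), every column of $H$ contains only the entries $1-\xi_j$ and $\xi_j$ and thus sums to one, so $H^{\T}$ is row-stochastic. The vertex-coordinate recursion $a^{(n)} = H_n^{\T} H_{n-1}^{\T} \cdots H_1^{\T} a^{(0)}$ (and analogously for $b^{(n)}$) is therefore a product of i.i.d.\ random stochastic matrices. The associated Markov chain performs a cyclic ``backwards'' random walk with self-loops; Assumption~\ref{Asu1} makes every single-step transition positive almost surely, and since any state reaches any other in at most $d$ steps, $H_d^{\T}\cdots H_1^{\T}$ has strictly positive entries almost surely, verifying the positivity hypothesis of Proposition~\ref{stoch}.

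Second, Proposition~\ref{stoch} then yields a random probability vector $W=(w_1,\dots,w_d)$ with $H_n^{\T}\cdots H_1^{\T}\to {\bf 1}^{\T}W$ almost surely, so every coordinate of $a^{(n)}$ converges to $P_x:=\sum_j w_j a_j^{(0)}$ and every coordinate of $b^{(n)}$ to $P_y:=\sum_j w_j b_j^{(0)}$; hence every vertex $A_j^{(n)}$ converges a.s.\ to the common point $P=(P_x,P_y)$. Because $W$ is a probability vector, $P$ is a convex combination of the initial vertices; and because $L_{n+1}\subseteq L_n$ (new vertices lie on old edges of the convex set $L_n$), $P\in L_n$ for every $n$, in particular $P\in L_0$.

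Third, I would convert this into the exponential rate by sandwiching $\max_j\|PA_j^{(n)}\|$ with the longest-side length $\M_n$ of~\eqref{eqMndefined}. The endpoints of the longest side give $\M_n\le 2\max_j\|PA_j^{(n)}\|$ via the triangle inequality, while $P\in L_n$ and $\operatorname{diam}(L_n)\le d\,\M_n$ (perimeter bound on diameter) give $\max_j\|PA_j^{(n)}\|\le d\,\M_n$. Since Assumption~\ref{Asu3} forces $|\det T|>0$ a.s.\ and therefore subsumes Assumption~\ref{Asu2}, Lemma~\ref{lemtildeS} applies and yields $n^{-1}\log\M_n\to\mu_1$ a.s.; taking logarithms of the sandwich transfers this to $n^{-1}\log\max_j\|PA_j^{(n)}\|\to\mu_1$, which is the claim.

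The main obstacle in this plan is the first step: one has to notice that, although the edge dynamics is not stochastic, the \emph{vertex} dynamics is naturally governed by the transposed matrix $H^{\T}$, which is, and then to verify the $d$-step positivity condition using the cyclic structure of the associated Markov chain; once this is in place, the remainder reduces to a single invocation of Proposition~\ref{stoch}, an elementary geometric sandwich, and Lemma~\ref{lemtildeS}.
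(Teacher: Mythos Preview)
Your proposal is correct and follows essentially the same route as the paper: recognize that the vertex recursion is driven by the transposed (hence row-stochastic) matrix $H^{\T}$, verify that the $d$-fold product is a.s.\ strictly positive via the cyclic-with-self-loop structure, invoke Proposition~\ref{stoch} to obtain the limiting point~$P$ as a convex combination of the initial vertices, and then sandwich $\max_j\|PA_j^{(n)}\|$ between constant multiples of~$\M_n$ so that Lemma~\ref{lemtildeS} delivers the rate~$\mu_1$. Your explicit remark that Assumption~\ref{Asu3} forces $|\det T|>0$ a.s.\ and hence entails Assumption~\ref{Asu2} (needed for Lemma~\ref{lemtildeS}) is a point the paper leaves implicit.
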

\begin{proof}
By Assumption~\ref{Asu1} we have that~$H_d^{\T}   H_{d-1}^{\T}  ...H_2^{\T}  H_{1}^{\T}$ is almost surely a positive stochastic matrix. Therefore, from Proposition~\ref{stoch} it follows that there exists a random non-negative vector~$\zeta=(\zeta_1,...,\zeta_d)$ such that~$\zeta_1+...+\zeta_d=1$ for which
$$
a^{(n)} \to \left(\zeta_1 a^{(0)}_1+...+\zeta_d a^{(0)}_d\right){\bf 1}
$$
and 
$$
b^{(n)} \to \left(\zeta_1 b^{(0)}_1+...+\zeta_d b^{(0)}_d\right){\bf 1}
$$
almost surely as~$n\to\infty$. It implies that the sequence of polygon~$L_n$ converges to a random point~$P$ defined by the following vector identity
$$
OP=\zeta_1 OA_d^{(0)}+\zeta_2 OA_1^{(0)}+...+\zeta_d OA_{d-1}^{(0)}
$$
where~$O=(0,0)$ is the origin of the Cartesian plane. (Observe that if~$\xi_i$ is~$\text{Beta}(\alpha,\beta)$ distributed on~$(0,1)$ then~$\zeta=(\zeta_1,...,\zeta_d)$ is a Dirichlet distributed random vector with parameters~$(\alpha+\beta,\alpha+\beta,...,\alpha+\beta)$.~)

Since
$$
\| PA_j\|< \|A_dA_1 \|+\|A_1A_2 \| +...+\|A_{d-1}A_d \|\le d \times \max_{k=1,2,...,d}\|A_kA_{k+1}\|.
$$
and on the other hand, for each~$k=1,2,...,d$, we have
$$ 
\max_{j=1,2,...,d} \| PA_j\| 
\ge \frac{1}{2} \left(\| PA_k\|+
\| PA_{k+1}\|\right)
\ge \frac{1}{2}  \| A_kA_{k+1}\|
$$
the following inequality inequalities hold:
\begin{align}
\label{lenght1}
\M_n
\le \max_{j=1,2,...,d} \| PA^{(n)}_j\|\le d \,
\M_n.
\end{align} 
Under the Assumption~\ref{Asu3} we have (see our Lemma~\ref{lemtildeS} and Proposition~III.7.2 in~\cite{BL})
$$
\lim_{n\to\infty} \frac{1}{n}  \log\left(  
\M_n
\right)= \lim_{n\to\infty} \frac{1}{n}\log \| T_nT_{n-1}...T_2T_1\|=\mu_1\in(-\infty,0)
$$
almost surely. Therefore, 
$$
\max_{j=1,2,...,d} \| PA^{(n)}_j\| \sim C e^{\mu_1 n}
$$ 
almost surely as~$n\to \infty$.
\end{proof}

\section{Random triangles revisited}
The goal of this Section is to show that in three-dimensional case the projection of the ``middle'' vertex on the largest side of the triangle converges in distribution, thus generalizing the result of Theorem~5 in~\cite{VOL}; our main results is presented in Theorems~\ref{lemconvtoeta} which follows later in the Section. We also evaluate the speed of convergence to flatness in Theorem~\ref{mainthm4}, as well as study some examples; in particular, we strengthen the result of Theorem~4 in~\cite{VOL}.

Since~$x\in{\cal L}$ defined by~\eqref{eqlspace} we can restrict our attention just to the first~$d-1$ coordinates of~$x$. Let us introduce the norm
$$
\|x\|_{\infty}=\max_{j=1,\dots,d} \| x_j\|=\max\{|x_1|,|x_2|,...,|x_{d-1}|,|x_1+...+x_{d-1}|\}.
$$ 
and for each~$x$ in the unit ball~$B_{\infty}=\{(x_1,...,x_{d-1})\in\R^{d-1}: \|x\|_{\infty}=1\}$ 
the map~$\widehat{T}:\ B_{\infty} \to B_{\infty}$ 
by
$$
\widehat{T}(x)=\frac{1}{\|Tx\|_{\infty}}\, Tx.
$$ 
Notice that~$\left\{\widehat{T^{(n)}}(x)\right\}_{n\ge 1}$ is a Markov chain which can be considered as a system of iterated random functions in the sense mentioned 
in~\cite{DF, KAI}. We will use the following result implied from Lemma~2.5 in~\cite{KAI}:

\begin{lemma}
Let~$D_\epsilon=\{(x,y) : x,y \in B_{\infty}, \|x-y\|_{\infty}\le \epsilon   \}$. If \begin{equation}\label{kaijser}
\displaystyle\limsup_{n\to\infty} \sup_{(x,y)\in D_\epsilon}\E\left\|\widehat{T^{(n)}}(x) -\widehat{T^{(n)}}(y) \right\|_{\infty}\to 0
\end{equation} 
as~$\epsilon\to 0$ then~$\widehat{T^{(n)}}(x)$ weakly converges to some random vector.
\end{lemma}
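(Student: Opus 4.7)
\emph{Plan.} Observe that $\{\widehat{T^{(n)}}(x)\}_{n\ge 1}$ is a time-homogeneous Markov chain on the compact metric space $(B_\infty,\|\cdot\|_\infty)$, driven by the i.i.d.\ random maps $F_k\colon u\mapsto\widehat{T_k u}$, so that $\widehat{T^{(n)}}(x)=F_n\circ\cdots\circ F_1(x)$. This places us in the iterated-random-functions paradigm of Diaconis--Freedman and Kaijser. The strategy is the classical one: pass to the \emph{backward} composition, reduce weak convergence to a uniform $L^1$-contraction, and establish that contraction from the local hypothesis~\eqref{kaijser}.

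Set $Y_n:=F_1\circ F_2\circ\cdots\circ F_n(x)$. By the i.i.d.\ property $Y_n\stackrel{d}{=}\widehat{T^{(n)}}(x)$, so it suffices to prove $Y_n$ converges almost surely. For $m>n$, writing $Y_m=F_1\circ\cdots\circ F_n(Z)$ with $Z:=F_{n+1}\circ\cdots\circ F_m(x)\in B_\infty$ independent of $F_1,\ldots,F_n$ and conditioning on $Z$,
\[
\E\|Y_m-Y_n\|_\infty\le\rho_n,\qquad \rho_n:=\sup_{u,v\in B_\infty}\E\bigl\|\widehat{T^{(n)}}(u)-\widehat{T^{(n)}}(v)\bigr\|_\infty,
\]
so the whole problem reduces to proving $\rho_n\to 0$. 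A first observation is that $\rho_n$ is non-increasing: writing $T^{(n)}=(T_n\cdots T_2)\,T_1$ and conditioning on $T_1$ yields the identity $h_n(x,y)=\E\,h_{n-1}\bigl(\widehat{T_1}(x),\widehat{T_1}(y)\bigr)$, with $h_n(x,y):=\E\|\widehat{T^{(n)}}(x)-\widehat{T^{(n)}}(y)\|_\infty$, so taking suprema gives $\rho_n\le\rho_{n-1}$ and $\rho_\infty:=\lim_n\rho_n$ exists.

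To force $\rho_\infty=0$, combine the hypothesis with the Chapman--Kolmogorov-type identity $h_{n+k}(x,y)=\E\,h_n\bigl(\widehat{T^{(k)}}(x),\widehat{T^{(k)}}(y)\bigr)$ and split the right-hand expectation according to whether the random pair $\bigl(\widehat{T^{(k)}}(x),\widehat{T^{(k)}}(y)\bigr)$ lies in $D_\epsilon$ or not: on $D_\epsilon$ invoke~\eqref{kaijser}, and on its complement use the crude bound $h_n\le\rho_n$ together with Markov's inequality to dominate the complement probability by $h_k(x,y)/\epsilon\le\rho_k/\epsilon$. This produces
\[
\rho_{n+k}\le \sup_{(x,y)\in D_\epsilon}h_n(x,y)+\frac{\rho_n\rho_k}{\epsilon};
\]
letting $n\to\infty$, then $k\to\infty$, and finally $\epsilon\to 0$, using the monotonicity of $\rho_n$ together with~\eqref{kaijser}, should collapse this to $\rho_\infty=0$. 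Once $\rho_n\to 0$, the bound $\E\|Y_m-Y_n\|_\infty\le\rho_n$ shows $\{Y_n\}$ is $L^1$-Cauchy, hence converges a.s.\ (and in probability) to some $Y_\infty\in B_\infty$; since $Y_n\stackrel{d}{=}\widehat{T^{(n)}}(x)$ for each $n$, the latter converges in distribution to $Y_\infty$.

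The \textbf{main obstacle} lies in the last local-to-global step. A naive chaining that covers $B_\infty$ by $\epsilon$-balls produces only $\rho_n\lesssim\epsilon^{-1}\sup_{D_\epsilon}h_n$, and the amplification factor $\epsilon^{-1}$ is not automatically killed by the qualitative decay given by~\eqref{kaijser}. The stochastic splitting above replaces this deterministic chain length by the \emph{random} spread $\rho_k/\epsilon$, but closing the resulting bootstrap inequality $\rho_\infty\le\eta(\epsilon)+\rho_\infty^2/\epsilon$ (with $\eta(\epsilon):=\limsup_n\sup_{D_\epsilon}h_n$) down to $\rho_\infty=0$ requires a delicate choice of $\epsilon$ in terms of $\rho_\infty$ and a careful interplay with the rate at which $\eta(\epsilon)\to 0$; making this genuinely rigorous is the technical core of Kaijser's Lemma~2.5 on which we rely.
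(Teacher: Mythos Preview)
The paper does not prove this lemma at all; it simply cites it as ``implied from Lemma~2.5 in~[KAI]'' and moves on. Your proposal does the same thing---you ultimately defer to Kaijser's Lemma~2.5---but you supply a useful heuristic sketch of the backward-iteration mechanism and the bootstrap inequality $\rho_\infty\le\eta(\epsilon)+\rho_\infty^2/\epsilon$, and you correctly flag that closing this inequality to $\rho_\infty=0$ is exactly the non-trivial step that the qualitative hypothesis~\eqref{kaijser} does not immediately deliver (for instance, choosing $\epsilon=2\rho_\infty$ only yields $\rho_\infty\le 2\eta(2\rho_\infty)$, which is not a contradiction without control on the rate of $\eta$). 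Since both you and the paper rest on the same cited result, your treatment is consistent with---and more informative than---the paper's.
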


Here is a very important result.
\begin{lemma}\label{lemweak}
Assume that Assumption~\ref{Asu1} and~\ref{Asu2} are fulfilled then~$\left(\max_{j=1,\dots,d}\left\|x^{(n)}_j\right\|\right)^{-1}x^{(n)}$ converges in distribution to some random vector as~$n\to\infty$.
\end{lemma}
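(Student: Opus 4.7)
The plan is to verify the hypothesis~\eqref{kaijser} of Kaijser's lemma stated just above, which then yields the desired weak convergence. All the tools needed are in place: by Propositions~\ref{p.st.irred} and~\ref{pr.contra} the smallest closed semigroup generated by the support of~$T$ is strongly irreducible and contracting, and Assumption~\ref{Asu3} (equivalently,~\eqref{eqlogpm}) provides the logarithmic moment condition, placing us squarely in the Le~Page framework for products of random matrices (see Chapter~V of~\cite{BL}).

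The key algebraic observation is the elementary identity
$$
\widehat{T^{(n)}}(x) - \widehat{T^{(n)}}(y) \;=\; \frac{T^{(n)}(x-y)}{\|T^{(n)} x\|_\infty} + \widehat{T^{(n)}}(y)\cdot\frac{\|T^{(n)} y\|_\infty - \|T^{(n)} x\|_\infty}{\|T^{(n)} x\|_\infty},
$$
which, combined with $\big|\|T^{(n)} y\|_\infty - \|T^{(n)} x\|_\infty\big|\le \|T^{(n)}(x-y)\|_\infty$ and $\|\widehat{T^{(n)}}(y)\|_\infty=1$, produces the unsigned pointwise bound
$$
\|\widehat{T^{(n)}}(x) - \widehat{T^{(n)}}(y)\|_\infty \;\le\; \frac{2\,\|T^{(n)}(x-y)\|_\infty}{\|T^{(n)} x\|_\infty}.
$$
For $(x,y)\in D_\epsilon$ we have $\|T^{(n)}(x-y)\|_\infty \le C_d\,\sigma_1^{(n)}\,\epsilon$, where $\sigma_1^{(n)}$ denotes the largest singular value of $T^{(n)}$ and $C_d$ is a constant depending only on~$d$. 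Together with the trivial bound $\|\widehat{T^{(n)}}(x) - \widehat{T^{(n)}}(y)\|_\infty\le 2$ and the elementary inequality $\min(2,z) \le 2^{1-s} z^s$ (valid for $z\ge 0$ and $s\in(0,1)$), this gives
$$
\|\widehat{T^{(n)}}(x) - \widehat{T^{(n)}}(y)\|_\infty \;\le\; C_{s,d}\,\epsilon^{s}\,\Bigl(\sigma_1^{(n)}\big/\|T^{(n)} x\|_\infty\Bigr)^{s}.
$$

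The hard part---the main obstacle---is now the uniform moment bound
$$
K_s \;:=\; \sup_{x\in B_\infty,\,n\ge 1} \E\Bigl[\Bigl(\sigma_1^{(n)}\big/\|T^{(n)} x\|_\infty\Bigr)^{s}\Bigr] \;<\; \infty
$$
for some $s\in(0,1)$. This is precisely the H\"older regularity of the unique stationary probability measure on $P(\R^{d-1})$, a classical consequence of strong irreducibility, contraction, and the moment condition from Assumption~\ref{Asu3} (see e.g.\ Proposition~V.2.3 and the estimates in Chapter~VI of~\cite{BL}); it quantitatively forbids concentration of $T^{(n)} x$ near the lower-dimensional invariant subspaces of $T^{(n)}$. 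Granted $K_s<\infty$, taking expectations and then the supremum over $(x,y)\in D_\epsilon$ in the previous display yields
$$
\sup_{(x,y)\in D_\epsilon} \E\|\widehat{T^{(n)}}(x) - \widehat{T^{(n)}}(y)\|_\infty \;\le\; C_{s,d}\,K_s\,\epsilon^{s},
$$
uniformly in $n$; letting $n\to\infty$ and then $\epsilon\to 0$ verifies~\eqref{kaijser} and completes the proof.
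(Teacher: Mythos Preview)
Your algebraic estimates are fine and match the paper's in spirit, but the argument has a genuine gap at the moment step. You invoke Assumption~\ref{Asu3} and then the Le~Page/H\"older regularity machinery from Chapters~V--VI of~\cite{BL} to conclude
\[
K_s \;=\; \sup_{x\in B_\infty,\ n\ge 1}\ \E\!\left[\Bigl(\tfrac{\|T^{(n)}\|}{\|T^{(n)}x\|}\Bigr)^{\!s}\right]\;<\;\infty .
\]
Two problems. First, Lemma~\ref{lemweak} assumes only Assumptions~\ref{Asu1} and~\ref{Asu2}; Assumption~\ref{Asu3} is \emph{not} available here, so you are proving a strictly weaker statement. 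Second, even granting Assumption~\ref{Asu3}, the H\"older regularity results you cite require an \emph{exponential} moment of the type $\E\exp(\tau\,\ell(T))<\infty$ for some $\tau>0$ (equivalently here, $\E|\det T|^{-\tau}<\infty$), not merely the logarithmic moment $\E\log|\det T|>-\infty$ of Assumption~\ref{Asu3}. Under Assumptions~\ref{Asu1} and~\ref{Asu2} alone, $|\det T|$ can have arbitrarily heavy lower tails, so there is no reason for $K_s$ to be finite; indeed already at $n=1$ one has $\|T\|/\|Tx\|\le C\,\|T^{-1}\|$, and $\E\|T^{-1}\|^{s}$ need not be finite.

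The paper circumvents exactly this obstacle. It obtains the same pointwise bound~\eqref{Thatbound0} but, instead of seeking an $L^s$ estimate, it uses only the almost-sure control from Chapter~III of~\cite{BL} (Theorem~III.3.1 and Proposition~III.3.2), which needs no moment on $T^{-1}$: for any convergent sequence $x_n\to x$ one has $\limsup_n \|T^{(n)}\|/\|T^{(n)}x_n\|\le \|\zeta_x\|^{-1}$ a.s.\ with $\P(\zeta_x=0)=0$. This is combined with a truncation of the expectation at level $(4r\epsilon)^{-1}$ and a compactness/subsequence argument on $B_\infty$ to handle the supremum over $x,y$; the remaining piece is controlled by the uniform angular contraction $\sup_{x,y}\E\,\delta(T^{(n)}\bar x,T^{(n)}\bar y)\to 0$ from Theorem~III.4.3. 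If you want to repair your route, you would need either to add an exponential-moment hypothesis on $|\det T|^{-1}$ (not acceptable for Lemma~\ref{lemweak} as stated) or to replace the $L^s$ bound by such an a.s.\ bound plus truncation, which brings you back to the paper's proof.
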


\begin{proof}
Assume that all the points~$x$, $y$, etc., belong to~$B_{\infty}$ unless stated otherwise.
Next, w.l.o.g.\ assume that~$\|T^{(n)}x\|_{\infty} \le \|T^{(n)}y\|_{\infty}$, then we have
\begin{align*}
& \|  \widehat{T^{(n)}}(x) -\widehat{T^{(n)}}(y) \|_{\infty}=\frac{1}{\|T^{(n)}x\|_{\infty}}\left\| T^{(n)}\left(x-\frac{\|T^{(n)}x\|_{\infty}}{\|T^{(n)}y\|_{\infty}} y \right)\right\|  \le \frac{\|T^{(n)}\|_{\infty}}{\|T^{(n)}x\|_{\infty}}\left\|x-\frac{\|T^{(n)}x\|_{\infty}}{\|T^{(n)}y\|_{\infty}} y \right\|_{\infty} \\
& \le \frac{\|T^{(n)}\|_{\infty}}{\|T^{(n)}x\|_{\infty}} \left(  \|x-y \|_{\infty}+\left( 1-\frac{\|T^{(n)}x\|_{\infty}}{\|T^{(n)}y\|_{\infty}}  \right)\|y\|_{\infty}\right)\\
 &\le \frac{\|T^{(n)}\|_{\infty}}{\|T^{(n)}x\|_{\infty}} \|x-y \|_{\infty}+\frac{\|T^{(n)}\|_{\infty}}{\|T^{(n)}x\|_{\infty}\cdot \|T^{(n)}y\|_{\infty}}\left( \|T^{(n)}y\|_{\infty}-\|T^{(n)}x\|_{\infty}\right) \\
& \le  \frac{\|T^{(n)}\|_{\infty}}{\|T^{(n)}x\|_{\infty}} \|x-y \|_{\infty} + \frac{\|T^{(n)}\|_{\infty}}{\|T^{(n)}x\|_{\infty}\cdot \|T^{(n)}y\|_{\infty}}
\left( \|T^{(n)}(y-x) \|_{\infty}
\right)  \  \text{ (by the triangle inequality)}\\
& \le  2 \frac{\|T^{(n)}\|^2_{\infty}}{\|T^{(n)}x\|_{\infty}\cdot \|T^{(n)}y\|_{\infty}}\|x-y \|_{\infty} ,
\end{align*}
where~$\|T\|_{\infty}=\sup_{x\in B_{\infty}}\|Tx\|_{\infty}, \|T\|=\sup_{\|x\|=1}\|Tx\|$ are the usual operator norms. Therefore, since all the norms on finite dimensional spaces are equivalent, there exists a non random constant~$r>0$ such that
\begin{equation}\label{Thatbound0}
 \|  \widehat{T^{(n)}}(x) -\widehat{T^{(n)}}(y) \|_{\infty} \le  r \cdot \frac{\|T^{(n)}\|^2}{\|T^{(n)}x\|\|T^{(n)}y\|}\|x-y \|_{\infty} 
\end{equation}
On the other hand, by Theorem~III.3.1 in~\cite{BL}, for almost all~$\omega$, the exist one-dimensional linear space~$V(\omega)\subset \mathbb{R}^{d-1}$ which is the range of limit points of~$\| T_1(\omega)\dots T_n(\omega)\|^{-1}T_1(\omega)\dots T_n(\omega)$. By the proof of Proposition~III.3.2 in~\cite{BL} if a sequence~$\{x_n\}_{n\ge1}\subset B_{\infty}$ converges to~$x$ and~$\zeta_{x}(\omega)$ is the orthogonal projection of~$x$ onto~$V(\omega)$ then 
\begin{align}\label{ineqzeta}
\limsup_{n\to\infty}\frac{\|T^{(n)}\|}{\|T^{(n)}x_n\|} \le \|\zeta_{x}\|^{-1}\ \ \text{a.s.}
\end{align}
and 
\begin{align}\label{probzeta}
\P\left(\|\zeta_x\|=0\right)=0.
\end{align}
Therefore, we obtain that
\begin{equation}\label{Thatbound}
\limsup_{n\to\infty}\|  \widehat{T^{(n)}}(x) -\widehat{T^{(n)}}(y) \|_{\infty} \le r  \|\zeta_{x}\|^{-1}  \|\zeta_{y}\|^{-1} \|x-y \|_{\infty} \ \ {\rm a.s.}
\end{equation}

Let us now verify the condition~\eqref{kaijser}. We have
\begin{align*}
\E\left\|\widehat{T^{(n)}}(x) -\widehat{T^{(n)}}(y) \right\|_{\infty}&\le \E\left(\left\|\widehat{T^{(n)}}(x) -\widehat{T^{(n)}}(y) \right\|_{\infty} \mathbf{1}_{\{ \frac{\|T^{(n)}\|^2}{\|T^{(n)}x\|\|T^{(n)}y\|}\ge \frac{1}{4r\epsilon} \}}\right) + \\
&  + \E\left(\left\|\widehat{T^{(n)}}(x) -\widehat{T^{(n)}}(y) \right\|_{\infty} \mathbf{1}_{\{\frac{\|T^{(n)}\|^2}{\|T^{(n)}x\|\|T^{(n)}y\|}\le \frac{1}{4r\epsilon} \}}\right)
=:{\sf (I)}+{\sf (II)}
\end{align*}

To bound~${\sf (I)}$, observe that~$\left\|\widehat{T^{(n)}}(x) -\widehat{T^{(n)}}(y) \right\|_{\infty}\le 2$
and therefore~${\sf (I)}\le 2\, 
\P \left(\frac{\|T^{(n)}\|^2}{\|T^{(n)}x\|\|T^{(n)}y\|}\ge \frac{1}{4r\epsilon} \right)$.
Suppose
$$
\limsup_{n\to\infty}\sup_{(x,y)\in D_\epsilon}
\P \left(\frac{\|T^{(n)}\|^2}{\|T^{(n)}x\|\|T^{(n)}y\|}\ge \frac{1}{4r\epsilon} \right)\not\to 0
\ \text{\ as $\eps\to 0$.}
$$
Then there exists a~$\beta>0$ and a decreasing sequence~$\eps_k\downarrow 0$ such that this~$\limsup_{n\to\infty} 
\sup_{(x,y)\in D_{\epsilon_k}}\P(\dots\ge (4r\epsilon_k)^{-1})
 >\beta$ for each~$k$; therefore for each~$k$ there is a sequence~$n_i^{(k)}$, $i=1,2,\dots$ such that
\begin{align}\label{eqPbeta}
\delta(k):=\P \left(\frac{\|T^{(n_i^{(k)})}\|^2}{\|T^{(n_i^{(k)})}x_{n_i^{(k)}}\|\|T^{(n_i^{(k)})}y_{n_i^{(k)}}\|}\ge \frac{1}{4r\eps_k} \right)>\beta
\ \text{ for all }i=1,2,\dots.
\end{align}
Let~$m_k=n_k^{(k)}$. Without loss of generality assume that~$x_{m_k}\to x_*\in B_\infty$ and~$y_{m_k}\to y_*\in B_\infty$; since~$B_\infty$ is compact we can always choose a convergence subsequence.

By~\eqref{ineqzeta} we have
$$
\delta_x(k):=\P\left(
\frac{\|T^{(m_k)}\|^2}{\|{T^{(m_k)}}x_{m_k}\|}\ge
2 \|\zeta_{x_*}\|^{-1}\right)\to 0, \ \
\delta_y(k):=\P\left(
\frac{\|T^{(m_k)}\|^2}{\|{T^{(m_k)}}y_{m_k}\|}\ge
2 \|\zeta_{y_*}\|^{-1}\right)\to 0, \ \
$$
as~$k\to\infty$.
Hence
\begin{align*}
\delta(k)&\le \delta_x(k)+\delta_y(k)
+\P \left(4 \|\zeta_{x_*}\|^{-1} \|\zeta_{y_*}\|^{-1}
\ge \frac{1}{4r\eps_k} \right)
=
\delta_x(k)+\delta_y(k)
+\P \left( \|\zeta_{x_*}\|\cdot  \|\zeta_{y_*}\|
\le 16r\eps_k \right)
\\
&\le 
\delta_x(k)+\delta_y(k)
+\P \left( \|\zeta_{x_*}\|\le 4\sqrt{r\eps_k} \right)
+\P \left( \|\zeta_{x_*}\|\le 4\sqrt{r\eps_k} \right)
\to 0
\end{align*}
by~\eqref{probzeta}, leading to a contradiction with~\eqref{eqPbeta}.

On the other hand, if~$(x,y)\in D_\epsilon$ and~$\frac{\|T^{(n)}\|^2}{\|T^{(n)}x\|\|T^{(n)}y\|}\le \frac{1}{4r\epsilon}$ then the inequality~\eqref{Thatbound} implies that~$\|\widehat{T^{(n)}}(x) -\widehat{T^{(n)}}(y)\|_{\infty}\le \frac{1}{4}$, hence
\begin{align*}
& \sup_{(x,y)\in D_\epsilon} {\sf(II)} = \sup_{(x,y)\in D_\epsilon} \E\left(\left\|\widehat{T^{(n)}}(x) -\widehat{T^{(n)}}(y) \right\|_{\infty} \mathbf{1}_{\{ \frac{\|T^{(n)}\|^2}{\|T^{(n)}x\|\|T^{(n)}y\|}\le \frac{1}{4r\epsilon} \}}\right)\\
& \le \sup_{(x,y)\in D_\epsilon}\E\left(\left\|\widehat{T^{(n)}}(x) -\widehat{T^{(n)}}(y) \right\|_{\infty} \mathbf{1}_{\{\|\widehat{T^{(n)}}(x) -\widehat{T^{(n)}}(y)\|_{\infty}\le \frac{1}{4} \}}\right) \\
& \le {\sf Const}\cdot \sup_{x,y\in B_{\infty}}\E \delta\left(T^{(n)}\overline{x},T^{(n)}\overline{y}\right)  \end{align*}
where the last inequality holds since~$\| u-v \|_{\infty}\le {\sf Const}\cdot  \delta(\overline{u},\overline{v})$ for any vectors~$u,v$ such that the angle between~$u$ and~$v$ is smaller than~$\frac{\pi}{2}$. Finally, from the proof of Theorem~III.4.3 in~\cite{BL}, we have 
$$
\limsup_{n\to\infty} \sup_{x,y\in B_{\infty}}\E \delta\left(T^{(n)}\overline{x},T^{(n)}\overline{y}\right) =0.
$$
Therefore the condition~\eqref{kaijser} is fulfilled. 
\end{proof}

From now on assume that~$d=3$. Following~\cite{VOL}, for each~$n\ge 0$ rescale the triangle~$A_1^{(n)}A_1^{(n)}A_3^{(n)}$ to a new triangle~$B_1^{(n)}B_1^{(n)}B_3^{(n)}$ such that its longest edge has length~$1$, its vertices are reordered in a way that~$B_1^{(n)}B_2^{(n)}\ge  B_3^{(n)} B_1^{(n)}\ge B_2^{(n)}B_3^{(n)}$, and let the Cartesian coordinates of vertices be~$B_1^{(n)}=(0,0), B_2^{(n)}=(1,0), B_3^{(n)}=\theta_n=(g_n,h_n)$; formally 
\begin{align}\label{eqArea}
h_n=\frac{2 \Area(L_n)}{\M_n^2}
\end{align}
is the height of the rescaled triangle, corresponding to the largest side with length~$\M_n$, formally defined by~\eqref{eqMndefined}.
Without loss of generality, we can also  assume that~$A_1^{(0)} \equiv B_1^{(0)},A_2^{(0)} \equiv B_2^{(0)}, A_3^{(0)} \equiv B_3^{(0)}$.

\begin{theorem}\label{lemconvtoeta}
Assume that Assumption~\ref{Asu1} is fulfilled then~$g_n$  converges in distribution to some random variable~$\eta\in[1/2,1]$.
\end{theorem}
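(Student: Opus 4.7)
The strategy is to use Theorem~\ref{mainthm1} to express $g_n$ asymptotically as a scale-invariant continuous function of $x^{(n)}$ alone, and then to apply Lemma~\ref{lemweak} together with the continuous mapping theorem.

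Introduce complex coordinates $w_j^{(n)}:=x_j^{(n)}+iy_j^{(n)}$, $j=1,2,3$, which satisfy $w_1^{(n)}+w_2^{(n)}+w_3^{(n)}=0$. Let $\sigma_n$ be the (random) permutation of $\{1,2,3\}$ for which $|w_{\sigma_n(1)}^{(n)}|\ge|w_{\sigma_n(3)}^{(n)}|\ge|w_{\sigma_n(2)}^{(n)}|$; dividing the three edges by $w_{\sigma_n(1)}^{(n)}$ realizes the rescaling and rotation that send the longest edge to the segment from $0$ to $1$. Hence $B_3=1+w_{\sigma_n(2)}^{(n)}/w_{\sigma_n(1)}^{(n)}$, and (possibly conjugating to enforce $h_n\ge 0$, which leaves $g_n$ unchanged)
\[
g_n=1+\mathrm{Re}\Bigl(w_{\sigma_n(2)}^{(n)}/w_{\sigma_n(1)}^{(n)}\Bigr).
\]

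Theorem~\ref{mainthm1} yields that the angle $\theta_n$ between $x^{(n)}$ and $y^{(n)}$ satisfies $\sin\theta_n\to 0$ almost surely. Decomposing $y^{(n)}=c_n x^{(n)}+r_n e^{(n)}$ with $e^{(n)}\perp x^{(n)}$, $\|e^{(n)}\|=1$, $r_n=\sin\theta_n\|y^{(n)}\|$ and $c_n=\cos\theta_n\|y^{(n)}\|/\|x^{(n)}\|$, one gets
\[
\frac{w_j^{(n)}}{w_k^{(n)}}=\frac{x_j^{(n)}+\beta_j^{(n)}}{x_k^{(n)}+\beta_k^{(n)}},\qquad \beta_j^{(n)}:=\frac{ir_n e_j^{(n)}}{1+ic_n}.
\]
A short computation yields $|\beta_j^{(n)}|\le r_n/\sqrt{1+c_n^2}\le\sqrt{2}\,\sin\theta_n\,\|x^{(n)}\|$ (the second inequality holds once $\cos\theta_n\ge 1/\sqrt 2$); together with $\|x^{(n)}\|\le\sqrt{3}\,\|x^{(n)}\|_\infty$ this gives $|\beta_j^{(n)}|/\|x^{(n)}\|_\infty\to 0$ almost surely. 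Consequently $w_j^{(n)}/w_k^{(n)}\to x_j^{(n)}/x_k^{(n)}$ a.s.\ whenever $|x_k^{(n)}|=\|x^{(n)}\|_\infty$; in particular $|w_j^{(n)}|/|w_k^{(n)}|\to|x_j^{(n)}|/|x_k^{(n)}|$ a.s., so eventually the permutation $\sigma_n$ matches (up to ties) the one ordering the $|x_j^{(n)}|$ in the same fashion.

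An elementary consequence of $x_1^{(n)}+x_2^{(n)}+x_3^{(n)}=0$ is that the coordinate of maximal absolute value has opposite sign to the other two, whose absolute values sum to its own. Hence
\[
g_n=\psi\!\left(x^{(n)}/\|x^{(n)}\|_\infty\right)+o(1)\quad\text{a.s.},\qquad \psi(x):=\frac{\text{second largest of }|x_1|,|x_2|,|x_3|}{\text{largest of }|x_1|,|x_2|,|x_3|}.
\]
The function $\psi:\mathcal{L}\setminus\{0\}\to[0,1]$ is scale-invariant and continuous (tie configurations are easily seen to be continuity points, and the three $|x_j|$ cannot all coincide on $\mathcal{L}\setminus\{0\}$). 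Lemma~\ref{lemweak} gives $x^{(n)}/\|x^{(n)}\|_\infty$ converging in distribution to some $\tilde{X}$ supported on $\{x\in\mathcal{L}:\|x\|_\infty=1\}$; the continuous mapping theorem combined with Slutsky's lemma delivers $g_n\to\eta:=\psi(\tilde{X})\in[1/2,1]$ in distribution.

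The principal technical obstacle is the almost-sure estimate of the second paragraph: although $c_n$ and $\|y^{(n)}\|/\|x^{(n)}\|$ may be unbounded along the trajectory, the particular combination $r_n/\sqrt{1+c_n^2}$ entering the correction $\beta_j^{(n)}$ is always controlled by $\sin\theta_n\,\|x^{(n)}\|$, which is small relative to $\|x^{(n)}\|_\infty$.
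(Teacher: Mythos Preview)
Your proof is correct and follows essentially the same approach as the paper's: both reduce $g_n$ to the continuous scale-invariant function $\psi(x^{(n)}/\|x^{(n)}\|_\infty)$ (the paper calls it $f$) up to an a.s.\ $o(1)$ error, and then invoke Lemma~\ref{lemweak} together with the continuous mapping theorem. The only real difference is in how the $o(1)$ error is controlled: the paper asserts the geometric bound $|g_n-g_{x,n}|\le h_n$ and separately invokes Lemma~II.4.2 of~\cite{BL} to rule out near-vertical configurations (where $x^{(n)}$ is small relative to $y^{(n)}$), whereas your orthogonal decomposition $y^{(n)}=c_n x^{(n)}+r_n e^{(n)}$ and the resulting estimate $|\beta_j^{(n)}|/\|x^{(n)}\|_\infty\le \sqrt{6}\,\sin\theta_n$ give a self-contained quantitative argument that handles the near-vertical case automatically, without the external reference.
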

\begin{proof}
Recall that~$(x_i^{(n)},y_i^{(n)})$, $i=1,2,3$ denote the coordinates of the vectors corresponding to the three sides of the triangle.
Since they are asymptotically collinear, $g_n$ has the same limit as 
$$
g_{x,n}=f(x_1^{(n)},x_2^{(n)},x_3^{(n)})
$$
where $f(a,b,c)$ is the ratio between the second largest amongst~$\{|a|,|b|,|c|\}$ and the largest of them; in fact, $|g_{x,n}-g_n|\le h_n\to 0$ a.s.\ from an elementary geometric observation.
The only  problem which could arise is if the triangle is (nearly) vertical; however this does not happen for large $n$ a.s.\ by Lemma~II.4.2 from~\cite{BL} which says (equivalently) the the direction of the limiting flat triangle has a continuous distribution.

Since in our case~$a+b+c=0$,  we can write $f$ as
$$
f(a,b,c)=1-\frac{\min\{|a|,|b|,|c|\}}{\max\{|a|,|b|,|c|\}}
$$
Since the function~$f(\cdot)$ is continuous, and the vector~$x^{(n)}/\|x^{(n)}\|$ converges weakly by Lemma~\ref{lemweak}, the result follows. 
\end{proof}

\begin{theorem}\label{mainthm4}
Suppose that  Assumption~\ref{Asu3} is fulfilled. Then 
$$
\lim_{n\to\infty}\frac{1}{n}\log (h_n)=\E(\log(\det(T_1)))-2\int_{1/2}^1\zeta(x,0)d\P_{\eta}(x),\quad \text{a.s.}
$$
where~$\theta_n=(g_n,h_n)$ is defined just above~\eqref{eqArea}, $\eta$ is the weak limit of~$g_n$,  $\P_\eta$ is its probability measure,
and 
$$
\zeta(x,y)=\E\left(\log(
\M_1
)\ |\ \theta_0 = (x,y) \right).
$$
\end{theorem}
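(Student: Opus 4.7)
The plan is to decompose
$$
\frac{1}{n}\log h_n=\frac{1}{n}\log\bigl(2\Area(L_n)\bigr)-\frac{2}{n}\log\M_n,
$$
evaluate each piece separately, and then identify the top Lyapunov exponent $\mu_1$ (from Lemma~\ref{lemtildeS}) with the integral $\int_{1/2}^{1}\zeta(x,0)\,d\P_\eta(x)$.

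For the area, in the case $d=3$ form the $2\times 2$ matrix $X_n:=(x^{(n)}\mid y^{(n)})$ whose columns collect the edge coordinates. By the shoelace formula used in the proof of Proposition~\ref{lemflat}, $2\Area(L_n)=|\det X_n|$, while $X_{n+1}=T_{n+1}X_n$ together with $\det T_k=\prod_i(1-\xi_i^{(k)})+\prod_i\xi_i^{(k)}>0$ yields
$$
\log\Area(L_n)=\log\Area(L_0)+\sum_{k=1}^{n}\log\det T_k.
$$
Assumption~\ref{Asu3} makes the summands i.i.d.\ and integrable, so Kolmogorov's SLLN gives $\frac{1}{n}\log\Area(L_n)\to\E\log\det T_1$ almost surely. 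Lemma~\ref{lemtildeS} already provides $\frac{1}{n}\log\M_n\to\mu_1$ a.s., so the theorem reduces to the Furstenberg-type identity $\mu_1=\int_{1/2}^{1}\zeta(x,0)\,d\P_\eta(x)$.

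To establish this identity, set $\Delta_k:=\log(\M_k/\M_{k-1})$; since length ratios are invariant under translation and isotropic rescaling, $\Delta_k$ is a deterministic function of the rescaled shape $\theta_{k-1}$ and the fresh triple $(\xi_1^{(k)},\xi_2^{(k)},\xi_3^{(k)})$, whence $\E[\Delta_k\mid\mathcal F_{k-1}]=\zeta(\theta_{k-1})$. The inclusion $L_k\subset L_{k-1}$ bounds $\Delta_k$ above by an absolute constant, while the negative part can be controlled by arguments parallel to Proposition~\ref{propAsu3} relating $\log(\M_k/\M_{k-1})$ to products of $\xi_i(1-\xi_i)$; this shows that $\{n^{-1}\log\M_n\}_n$ is uniformly integrable, so by Lemma~\ref{lemtildeS} also $n^{-1}\E\log\M_n\to\mu_1$. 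Taking expectations in $\log\M_n-\log\M_0=\sum_{k=1}^n\Delta_k$ and averaging yields
$$
\frac{1}{n}\sum_{k=1}^{n}\E\zeta(\theta_{k-1})\longrightarrow\mu_1.
$$
On the other hand, Theorem~\ref{lemconvtoeta} combined with $h_k\to 0$ a.s.\ (Theorem~\ref{mainthm1}) gives $\theta_k\Rightarrow(\eta,0)$, and $\zeta$ is continuous on the compact set of reachable shapes (as an expectation over $\xi\in[0,1]^3$ of a continuous function), so $\E\zeta(\theta_k)\to\int_{1/2}^{1}\zeta(x,0)\,d\P_\eta(x)$; Cesàro preserves this limit, identifying it with $\mu_1$. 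Substituting back into the decomposition gives the claimed formula. The main obstacle is the technical justification that $\zeta$ extends continuously and $L^1(\P_\eta)$-boundedly up to the degenerate boundary $\{h=0\}$, together with the uniform integrability needed to pass from the almost-sure limit of Lemma~\ref{lemtildeS} to convergence of $n^{-1}\E\log\M_n$; both of these hinge on the integrability provided by Assumption~\ref{Asu3}.
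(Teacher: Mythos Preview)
Your proposal is correct and follows essentially the same route as the paper. The paper uses the recursion $h_n=h_{n-1}(\M_{n-1}/\M_n)^2\det T_n$, which is equivalent to your area decomposition since $h_n=2\Area(L_n)/\M_n^2$ and $\Area(L_n)=\det(T_n)\Area(L_{n-1})$; both then invoke the SLLN for $\frac1n\sum\log\det T_j$, Lemma~\ref{lemtildeS} for $\frac1n\log\M_n\to\mu_1$, and the Ces\`aro/weak-convergence argument for $\E\zeta(\theta_{k-1})\to\int_{1/2}^1\zeta(x,0)\,d\P_\eta(x)$ to identify $\mu_1$ with that integral. The paper in fact glosses over exactly the two technical points you flag---uniform integrability (to equate the a.s.\ limit $\mu_2-\mu_1$ with the limit of expectations) and the continuity of $\zeta$ up to the boundary $\{h=0\}$---so your account is, if anything, more careful than the published proof on these issues.
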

\begin{proof}
We have the following relation
$$
h_n= h_{n-1}\cdot \frac{
\M_{n-1}
^2}
{
\M_n^2}\cdot \det(T_n)
$$
which implies that
$$
\frac{1}{n}\log (h_n) =\frac{1}{n}
\sum_{j=1}^n\log( \det(T_j))-\frac{2}{n}\log (\M_n)
+O\left(\frac 1n\right).
$$
Suppose that Assumption~\ref{Asu3} is fulfilled. By the strong law of large numbers  and equation~\eqref{eqsumofsv} we have
$$
\lim_{n\to\infty}\frac{1}{n}\sum_{j=1}^n\log( \det(T_j))=\E(\log(\det(T_1)))=\mu_1+\mu_2 \quad \text{a.s.}
$$
By Lemma~\ref{lemtildeS} 
$$
\lim_{n\to\infty}\frac{1}{n}\log (\M_n)\to \mu_1 \ \ \text{a.s.}
$$
so that
$$
\lim_{n\to\infty}\frac{1}{n}\log (h_n) =\mu_2-\mu_1\ \quad \text{a.s.}
$$
On the other hand, we have 
$$
\frac{1}{n}\log (h_n) =\frac{1}{n}\sum_{j=1}^n\log( \det(T_j)) -\frac{2}{n}\sum_{j=1}^{n}\log \left(\frac{\M_j}{\M_{j-1}}\right)+ \frac{1}{n}\log (h_0).
$$
Let~$\P_n(d\theta|\ \theta_0)$ be the conditional probability measure of~$\theta_n$ on~$\theta_0$. We have
$$
\E\left(\frac{1}{n}\sum_{j=1}^n\log \left(\frac{\M_j}{\M_{j-1}}\right) \ | \ \theta_0\right)=\sum_{j=1}^n \frac{1}{n} \E\left(\log \left(\frac{\M_j}{\M_{j-1}}\right) \ | \ \theta_{0}\right)=\int \zeta (\theta) \tilde{\P}_n(d\theta|\ \theta_0),$$
where we denote
$\zeta(\theta)=\E\left(\log(\M_1)\ |\ \theta_0=\theta \right)$
and
$\displaystyle \tilde{\P}_n(d\theta|\ \theta_0)= \frac{1}{n} \sum_{j=1}^{n-1}{\P}_j(d\theta|\ \theta_0).$

We already know that~$h_n\to 0$ almost surely and~$g_n$ converges in distribution to some random variable~$\eta$ taking value on~$(1/2,1)$, therefore~$\theta_n=(g_n,h_n)$ converges in distribution to~$(\eta,0)$ as~$n\to\infty$. Since~$\zeta(x,0)$ is a continuous function of~$x$ on~$(1/2,1)$, using Ces\`aro mean result we have
$$
\lim_{n\to\infty}\int \zeta (\theta) \tilde{\P}_n(d\theta|\ \theta_0)=\lim_{n\to\infty}\int \zeta (\theta) {\P}_n(d\theta|\ \theta_0)=\int_{1/2}^1\zeta(x,0)d \P_{\eta}(x).
$$
where~$\P_\eta$ is the probability measure of~$\eta$. Therefore
\begin{align}\label{eq_speed}
\lim_{n\to\infty}\frac{1}{n}\log (h_n)=\E(\log(\det(T_1)))-2\int_{1/2}^1\zeta(x,0)
d\P_{\eta}(x).
\end{align}
\end{proof}

\textbf{Example 1.} Let us consider the case when random variables~$\xi_{1},\xi_{2},\xi_{3}$ are uniformly distributed on~$(0,1)$, notice that~$\theta_n=(g_n,h_n)$ converges in distribution to~$(U,0)$, where~$U$ is uniformly distributed on~$(\frac{1}{2},1)$, see~\cite{VOL}. We easily obtain that
\begin{align*}
\E\left(\log(\det(T_1))\right) =\E\left(\log\left((1-\xi_1)(1-\xi_2)(1-\xi_3)+\xi_1\xi_2\xi_3\right)\right)&=\frac{-24+\pi^2}{9}.
\end{align*}
and
$$
\zeta(x,0)=\frac{x \left(2 x^2 \log (x)-5 x+5\right)-2 (x-1)^3 \log (1-x)}{6 (x-1) x},
$$
hence 
\begin{align*}
\int_{1/2}^1\zeta(x,0)dx&=\frac{-15+\pi^2}{18}
\end{align*}
and we can conclude from~\eqref{eq_speed} that 
$$
h_n\sim C e^{-\frac{\pi^2-6}{9}n}
$$ 
as~$n\to\infty$ in the sense that~$\frac 1n \log h_n\to -\frac{\pi^2-6}{9}\approx -0.43$, thus strengthening the result of Theorem~4 in~\cite{VOL}.

\vskip 5mm

\textbf{Example 2.} 
Suppose that~$\xi_1,\xi_2,\xi_3$ have a continuous distribution with density symmetric around~$\frac 12$, i.e.~$p(1-x)=p(x)$.
Let~$x\in(0,1)$ and  set~$x_1=x \xi_1, x_3=x+(1-x)\xi_3, x_2=\xi_2$ and~$y_1\le y_2\le y_3$ be the triple~$x_1,x_2,x_3$ sorted in the increasing order. For~$z<x$, we have
$$
\P\left( \frac{ y_2-y_1}{y_3-y_1}<z \right)=  I_1(z,x)+I_2(z,x)
$$
where
\begin{align*}
 & I_1(z,x) = \P\left( \frac{ y_2-y_1}{{x_3}-y_1}<z; y_1<y_2<x<x_3 \right) = \P\left(  y_2<z x_3+(1-z)y_1; y_1<y_2<x<x_3  \right)\\ 
 & = \P\left(  y_1<y_2<x; z x_3+(1-z)y_1 >x  \right) +\P\left( y_1<y_2 <  z x_3+(1-z)y_1; z x_3+(1-z)y_1<x \right) \\
 & = \P\left(  y_1<y_2<x;\ \frac{x-zx_3}{1-z}< y_1<x;\ x<x_3<1 \right) \\
 & + \P\left( y_1<y_2 <  z x_3+(1-z)y_1; z x_3+(1-z)y_1<x;\ 0<y_1<\frac{x-zx_3}{1-z};\   x<x_3<1 \right)  \\
 &=  \int_x^1dx_3\int_{\frac{x-z x_3}{1-z}}^x dy_1 \int_{y_1}^x \left[ \frac{1}{x} p\left(\frac{y_1}{x}\right)p(y_2) + \frac{1}{x} p\left(\frac{y_2}{x}\right)p(y_1) \right]\frac{1}{1-x}p\left(\frac{x_3-x}{1-x}\right) dy_2\\
& + \int_x^1dx_3 \int_0^{\frac{x-z x_3}{1-z}} dy_1 \int_{y_1}^{z x_3+(1-z)y_1}\left[ \frac{1}{x} p\left(\frac{y_1}{x}\right)p(y_2) + \frac{1}{x} p\left(\frac{y_2}{x}\right)p(y_1) \right]\frac{1}{1-x}p\left(\frac{x_3-x}{1-x}\right) dy_2,\\
\end{align*}
and
\begin{align*}
& I_2(z,x)=  \P\left( \frac{ y_2-x_1}{{y_3}-x_1}<z; x_1<x<y_2<y_3 \right) = \P\left( y_3> \frac{y_2-(1-z)x_1}{z}; x_1<x<y_2<y_3 \right)\\
& = \P\left(\frac{y_2-(1-z)x_1}{z} <y_3<1; \frac{y_2-(1-z)x_1}{z}<1;  y_2>x \right)\\
& =  \P\left(\frac{y_2-(1-z)x_1}{z} <y_3<1; x<y_2< (1-z)x_1+z ;(1-z)x_1+z>x \right)\\
& = \P\left(\frac{y_2-(1-z)x_1}{z} <y_3<1; x<y_2< (1-z)x_1+z ; \frac{x-z}{1-z}<x_1<x\right)\\
&=  \int_{\frac{x-z}{1-z}}^x dx_1\int_x^{(1-z)x_1+z} dy_2 \int_{\frac{y_2-(1-z)x_1}{z}}^{1} \\ & \times\left[ \frac{1}{1-x}p\left(\frac{y_3-x}{1-x}\right)p(y_2)+\frac{1}{1-x}p\left(\frac{y_2-x}{1-x}\right)p(y_3)\right] \frac{1}{x}p\left(\frac{x_1}{x}\right)dy_3 .   
\end{align*}
For~$z>x$, by the symmetric property, we have
$$
\P\left( \frac{ y_2-y_1}{y_3-y_1}<z \right)=  I_1(1-z,1-x)+I_2(1-z,1-x).
$$

Let~$\eta$ be the invariant distribution defined in Theorem~\ref{lemconvtoeta}. Assume that~$2\eta-1$ has the density~$\phi(x)$, then~$\phi(x)$ is the unique solution of the following integral equation:
\begin{align}\label{eqinvarianteta}
\int_0^z\phi(x)dx&=\int_0^z\left[ I_1(1-z,1-x)+I_2(1-z,1-x) \right] \phi(x)dx\nonumber \\
&+ \int_z^{1}\left[ I_1(z,x) +I_2(z,x)\right]\phi(x)dx
\end{align}
since one can look, for example, at the linear projections of the vertices of the triangle, see also~\cite{VOL}.

Now fix a positive integer~$n$, and additionally assume that~$\xi_1,\xi_2,\xi_3$ are independent Beta$(n,n)$ distributed random variables, i.e.\ their  density function is given by
$$
p_n(\xi)= \begin{cases}
\frac{\xi^{n-1}(1-\xi)^{n-1}}{ {\rm B}(n,n)},& \xi\in(0,1)\\
0, & \text{otherwise}
\end{cases}
\quad {\rm where} \  
    {\rm B}(x,y) = \int_0^1t^{x-1}(1-t)^{y-1}\,\mathrm{d}t
$$
is the usual  Beta function. Let the corresponding invariant distribution~$\phi_n(x)$ be defined by~\eqref{eqinvarianteta}.

Using a computer algebra system, e.g.\ Mathematica\texttrademark\ or Maple\texttrademark, one can check that the solution to~\eqref{eqinvarianteta} for~$n=1,2,3,4,5$ are given by
\begin{align*}
\phi_1(z)&=1,\\
\phi_2(z)&=\frac{6}{7} \left((1-z) z+1\right) ,\\
\phi_3(z)&=\frac{30}{143} \left(3 (1-z)^2 z^2+4 (1-z) z+4\right) ,\\
\phi_4(z)&= \frac{140}{4199} \left(13 (1-z)^3 z^3+22 (1-z)^2 z^2+25 (1-z) z+25\right),\\
\phi_5(z)&= \frac{6174}{7429} \left(\frac{17}{49} (1-z)^4 z^4+\frac{5}{7} (1-z)^3 z^3+\frac{13}{14} (1-z)^2 z^2+(1-z) z+1\right).
\end{align*}
We conjecture that in the general case~$\phi_n(z)$ is also a mixture of some Beta distributions, that is, there exist non-negative constants~$c_1,c_2,...,c_{n}$ summing up to~$1$ such that 
$$
\phi_n(z)=\sum_{j=1}^{n}c_{j}\frac{z^{j-1}(1-z)^{j-1}}{B(j,j)}
$$
but unfortunately we cannot prove this fact.
\begin{figure}[!ht]
\centering\includegraphics[scale=0.6]{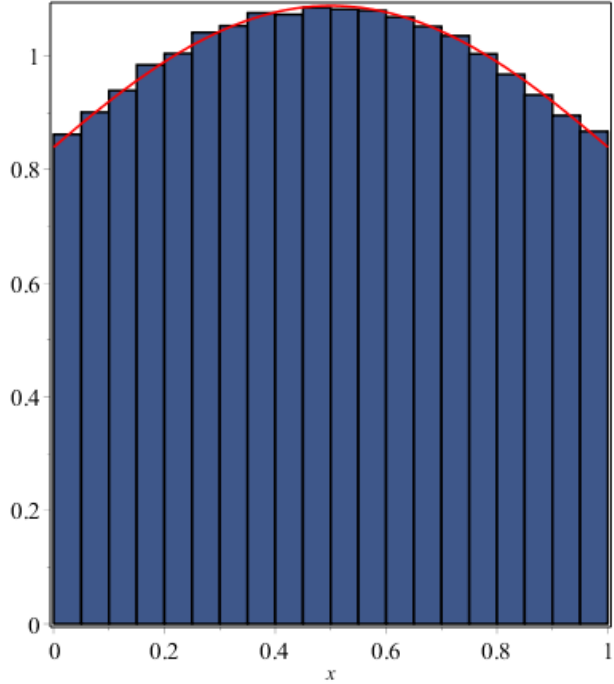} 
\caption{For~$\xi\sim {\sf Beta}(3,3)$, one can see the similarity between the histogram of~$\{2g_j-1, j=1,2,..., 10^6\}$ obtained from simulation and the plot of~$\{\phi_3(x), x\in[0,1]\}$.}
\end{figure}

\section{Generalizations and open problems}\label{sec_gener}
Let~$\xi_1,\xi_2,\dots,\xi_d$  be the random variables governing how the sides of the~$d$-polygon are split at each iteration. Throughout the paper we have assumed that~$\xi_j$, $j=1,\dots,d$ are i.i.d. However, if one looks at the proofs, one can see that the independence assumption can be substantially relaxed without any change in the proofs.
Indeed, let~$\bar\xi=(\xi_1,\xi_2,\dots,\xi_d)$ be the random variable describing the splitting proportions of the sides of the polygon. Assume that
\begin{itemize}
\item[(i)]
$\P(0<\xi_i<1)=1$ for all~$i$;
\item[(ii)]
there are two distinct numbers~$a,b\in(0,1)$ such that all~$2^d$ points of the form~$x=(x_1,\dots,x_d)\in\R^d$, where each~$x_i=a$ or~$=b$, belong to the support of~$\bar\xi$;
\item[(iii)]
if~$d$ is even then~$\xi_1\xi_2\dots \xi_d\ne (1-\xi_1)(1-\xi_2)\dots (1-\xi_d)$ a.s.
\end{itemize}
Then Conjecture~\ref{conj} is fulfilled (observe that we still suppose that random variables~$\bar\xi$ are drawn in i.i.d.\ manner for each iteration).

We also strongly feel that assumption (iii)
is, in fact, superfluous, so the result will hold even if some matrices are degenerate. Indeed, intuitively, when some of the matrices in the product are not full rank, this should even be helpful for the convergence to lower-dimensional subspaces. However, in this case we would clearly not be able to form a group containing all the matrices in the support of the measure and hence cannot use the standard results from the random matrix theory.

Another possible generalization of our model is to higher dimensional spaces, e.g.\ random subdivision of tetrahedrons in~$\R^3$. We are currently working on showing similar results in this case.

\subsection*{Acknowledgement}
We would like to thank the anonymous referees for useful comments and suggestions. S.V.~research is partially supported by Swedish Research Council grant~VR~2014--5157 and Crafoord foundation; M.N.~research is partially supported by 
Thorild Dahlgrens and Folke Lann\'ers funds and Crafoord foundation.

\end{document}